%%%%
%%%% Last update: 26/02/09
%%%% Version for arXiv
%%%%

\documentclass[12pt]{amsart}

\usepackage{amsmath,amsthm,amsfonts,amssymb}

\newtheorem{metatheorem}{metatheorem}[section]
\newtheorem{theorem}[metatheorem]{Theorem}

\newtheorem{lemma}[metatheorem]{Lemma}
\newtheorem{corollary}[metatheorem]{Corollary}

\newtheorem{itdefinition}[metatheorem]{Definition}
\newtheorem{itexample}[metatheorem]{Example}
\newtheorem{itremark}[metatheorem]{Remark}
\newtheorem{itquestion}[metatheorem]{Question}
\newtheorem{itproblem}[metatheorem]{Problem}

    {\begin{itexample}\begin{rm}}{\end{rm}\end{itexample}}
    {\begin{itdefinition}\begin{rm}}{\end{rm}\end{itdefinition}}
\newenvironment{remark}%
    {\begin{itremark}\begin{rm}}{\end{rm}\end{itremark}}
    {\begin{itquestion}\begin{rm}}{\end{rm}\end{itquestion}}
    {\begin{itproblem}\begin{rm}}{\end{rm}\end{itproblem}}

\newcommand{\Xd}{(X, d)}

\newcommand{\M}{\mathcal{M}}

\newcommand{\Mzero}{\M_0}
\newcommand{\Mone}{\M_1}

\newcommand{\MX}{\M(X)}
\newcommand{\MzeroX}{\M_0(X)}
\newcommand{\MoneX}{\M_1(X)}

\newcommand{\Imu}{I(\mu)}

\newcommand{\Imunu}{I(\mu, \nu)}

\newcommand{\N}{\mathbb{N}}
\newcommand{\R}{\mathbb{R}}

\newcommand{\im}{\mathop{\rm im}\nolimits}

\newcommand{\ds}{\displaystyle}

\allowdisplaybreaks

\newcommand{\nwAref}[1]{\ref{#1} of~\cite{NW1}}
\newcommand{\nwBref}[1]{\ref{#1} of~\cite{NW2}}
\newcommand{\nwCref}[1]{\ref{#1} of~\cite{NW3}}

%% Labels for references in "Distance Geometry ... I"
\newlabel{2.10}{{5.3}{17}}
\newlabel{2.19}{{3.6}{12}}
\newlabel{2.20}{{5.8}{21}}
\newlabel{2.6}{{3.1}{9}}
\newlabel{2.7}{{5.1}{16}}
\newlabel{2.9.5}{{5.2}{17}}
\newlabel{chapter1}{{1.1}{2}}
\newlabel{chapter2}{{5}{15}}
\newlabel{newlemma}{{2.2}{5}}
\newlabel{newtheorem}{{2.1}{5}}
\newlabel{nonstrict}{{3.5}{11}}
\newlabel{qhmconds}{{3.2}{10}}
\newlabel{sect:qhm}{{3}{9}}
%% End labels from "Distance Geometry ... I"

%% Labels for references in "Distance Geometry ... II"
\newlabel{2.1.4}{{3.6}{5}}
\newlabel{2.13}{{5.3}{8}}
\newlabel{2.1}{{3.5}{4}}
\newlabel{2.24}{{4.5}{6}}
\newlabel{2.2}{{3.1}{3}}
\newlabel{2.3}{{3.2}{3}}
\newlabel{2.4}{{3.3}{4}}
\newlabel{2.9}{{5.4}{8}}
\newlabel{4ptexample}{{5.7}{9}}
\newlabel{definitionA}{{4.1}{5}}
\newlabel{maxinvmeasures}{{3}{3}}
\newlabel{sqhmexample}{{5.9}{10}}
\newlabel{theoremDcorollary}{{4.11}{8}}
\newlabel{theoremD}{{4.9}{7}}
%% End labels from "Distance Geometry ... II"

%% Labels for references in "Distance Geometry ... III"
\newlabel{embeddingsI}{{3}{3}}
\newlabel{sphere200}{{3.2}{5}}
\newlabel{sphere400}{{3.4}{6}}
\newlabel{Assouad}{{3.5}{7}}
\newlabel{2.15}{{4.3}{7}}
\newlabel{alg1}{{5.2}{11}}
\newlabel{algremark2}{{5.4}{11}}
\newlabel{algremark3}{{5.5}{11}}
\newlabel{Assouad1}{{5.8}{13}}
%% End labels from "Distance Geometry ... III"

\begin{document}

\title{Finite Quasihypermetric Spaces}

\author{Peter Nickolas}
\address{School of Mathematics and Applied Statistics,
University of Wollongong, Wollongong, NSW 2522, Australia}
\email{peter\_\hspace{0.8pt}nickolas@uow.edu.au}
\author{Reinhard Wolf}
\address{Institut f\"ur Mathematik, Universit\"at Salz\-burg,
Hellbrunnerstrasse~34, A-5020 Salz\-burg, Austria}
\email{Reinhard.Wolf@sbg.ac.at}

\keywords{Compact metric space, finite metric space,
quasihypermetric space, metric embedding,
signed measure, signed measure of mass zero, spaces of measures,
distance geometry, geometric constant}

\subjclass[2000]{Primary 51K05; secondary 54E45, 31C45}

\date{}

\thanks{%
The authors are grateful for the financial
support and hospitality of the University of Salzburg and the
Centre for Pure Mathematics in the School of Mathematics and
Applied Statistics at the University of Wollongong.
The authors also wish to thank the referee for a number of detailed
comments, which helped to improve the exposition of the paper.}

\begin{abstract}

Let $\Xd$ be a compact metric space and let
$\MX$ denote the space of all finite signed Borel measures on~$X$.
Define $I \colon \MX \to \R$ by
$\Imu = \int_X \! \int_X d(x,y) \, d\mu(x) d\mu(y)$,
and set $M(X) = \sup \Imu$, where $\mu$ ranges over
the collection of measures in~$\MX$ of total mass~$1$.
The space $\Xd$ is \textit{quasihypermetric} if
$I(\mu) \leq 0$ for all measures~$\mu$ in $\M(X)$
of total mass~$0$ and is \textit{strictly quasihypermetric}
if in addition the equality $I(\mu) = 0$ holds amongst measures~$\mu$
of mass~$0$ only for the zero measure.

This paper explores the constant~$M(X)$ and other geometric aspects
of~$X$ in the case when the space $X$ is finite,
focusing first on the significance of the maximal strictly
quasihypermetric subspaces of a given finite quasihypermetric space
and second on the class of finite metric spaces which are
$L^1$-embeddable. While most of the results are for finite spaces,
several apply also in the general compact case.
The analysis builds upon earlier more general work of the authors
[Peter Nickolas and Reinhard Wolf,
\emph{Distance geometry in quasihypermetric spaces.\ I},
\emph{II} and~\emph{III}].
\end{abstract}

\maketitle

\section{Introduction}
\label{introduction}

Let $\Xd$ be a compact metric space.
We denote by $\MX$ the space of all finite signed Borel measures on~$X$
and by $\MzeroX$ and $\MoneX$, respectively, the subsets of $\MX$
comprising the measures of total mass~$0$ and total mass~$1$.
Define $I \colon \MX \times \MX \to \R$ and $I \colon \MX \to \R$ by
\[
\Imunu = \int_X \! \int_X d(x,y) \, d\mu(x) d\nu(y)
\quad\mbox{ and }\quad
\Imu = I(\mu, \mu)
\]
for $\mu, \nu \in \MX$.
Then we say that $\Xd$ is \textit{quasihypermetric} if $\Imu \leq 0$
for all $\mu \in \MzeroX$ and we say further that 
$\Xd$ is \textit{strictly quasihypermetric}
if in addition $\Imu = 0$ only when $\mu = 0$, for $\mu \in \MzeroX$.
We set
$
M(X) = \sup \Imu,
$
where $\mu$ ranges over~$\MoneX$,
and for $\mu \in \MX$ we define
the function $d_\mu \in C(X)$ by
\[
d_\mu(x) = \int_X d(x, y) \, d\mu(y)
\]
for $x \in X$.

In \cite{NW1} and~\cite{NW2}, we developed a general framework within
which to discuss aspects of the geometry of a compact quasihypermetric
or strictly quasihypermetric space~$X$, with special emphasis on
the behaviour of the geometric constant~$M(X)$. In~\cite{NW3},
we investigated within this framework the role of metric embeddings
in the theory and some of the properties of finite metric spaces.

In the present paper, we explore the case of a finite metric space
in more detail. Specifically, we examine the significance
of the maximal strictly quasihypermetric subspaces
of a given finite quasihypermetric space and
the case of finite $L^1$-embeddable spaces.
We discuss in particular the behaviour of the geometric constant~$M$
in both these contexts. Though most of our results are for
finite spaces, several apply also in the general compact case.

We make free use as necessary of definitions and results from
\cite{NW1, NW2, NW3}, and we reproduce them here only as necessary.
In~\cite{NW1} the background to our work, and in particular related
work by other authors (see \cite{AandS, Bjo, FR1, Hin1, Wol1},
for example), was discussed in some detail,
and this discussion will not be repeated here.

\section{Maximal Strictly Quasihypermetric Subspaces}
\label{maximal}

If $\Xd$ is a compact quasihypermetric metric space
(abbreviated normally to~$X$)
and if $Y$ is a subspace of~$X$ (subspaces will always be assumed
non-empty), then we say that $Y$ is a
\textit{maximal strictly quasihypermetric subspace}
if $Y$ is compact and strictly quasihypermetric and
there is no compact strictly quasihypermetric subspace of~$X$ 
which properly contains~$Y$.

Theorem~\nwAref{qhmconds} gives a number of conditions
which are equivalent to the quasihypermetric
property of a compact metric space~$X$. The definition of the
property is measure-theoretical, but one of the conditions
implies that the property is equivalent to the following condition:
for all $n \in \N$ and for all
$x_1, \ldots, x_n, y_1, \ldots, y_n \in X$,
\[\sum_{i,j=1}^n d(x_i, x_j) + \sum_{i,j=1}^n d(y_i, y_j)
 \leq
2 \sum_{i,j=1}^n d(x_i, y_j).
\]
In particular, $X$ is quasihypermetric if and only if all
finite subspaces of~$X$ are quasihypermetric.
In contrast to this, the strictly quasihypermetric property
in a general compact metric space appears to be
inherently measure-theoretical in character,
and not reducible to a condition on finite subspaces.
This is shown by Theorem~\nwBref{sqhmexample},
which constructs an infinite compact metric space
all of whose proper compact subspaces (and its finite subspaces
in particular) are strictly quasihypermetric
but which is not itself strictly quasihypermetric.

\begin{remark}
\label{remark100}
Quasihypermetric and strictly quasihypermetric spaces
are discussed in~\cite{HLMT} (where they are called
\textit{spaces of negative type} and \textit{strictly
negative type}, respectively). Though the definition
in~\cite{HLMT} corresponds with ours in the quasihypermetric case,
in the strictly quasihypermetric case it corresponds with ours
only in the finite case:
a space is strictly quasihypermetric in the sense of~\cite{HLMT} 
if and only if all its finite subspaces are strictly quasihypermetric
in our sense. (The space in Theorem~\nwBref{sqhmexample} is therefore
of course strictly quasihypermetric in the sense of~\cite{HLMT}.)
\end{remark}

From section~\nwBref{maxinvmeasures}, we recall that
a measure $\mu \in \Mone(X)$ is called \textit{maximal} on~$X$
if $\Imu = M(X)$ and that $\mu \in \MX$ is called
\textit{$d$-invariant} (or simply \textit{invariant}) on~$X$
if there exists $c \in \R$ such that
$d_\mu(x) = c$ for all $x \in X$;
the number~$c$ is then called the \textit{value} of~$\mu$.
The following result from~\cite{NW2}
relates maximal measures and invariant measures.

\begin{theorem}[= Theorem~\nwBref{2.2})]
\label{2.2again}
Let $\Xd$ be a compact metric space.
\begin{enumerate}
\item
If $\mu \in \Mone(X)$ is a maximal measure,
then $\mu$ is $d$-invariant with value $M(X)$.
\item
If $X$ is quasihypermetric and if $\mu \in \Mone(X)$ is $d$-invariant
with value~$c$, then $\mu$ is maximal and $M(X) = c$.
\item
If $X$ is strictly quasihypermetric, then there can exist
at most one maximal measure in~$\Mone(X)$.
\item
If $X$ is strictly quasihypermetric, then there can exist
at most one $d$-invariant measure in~$\MoneX$.
\end{enumerate}
\end{theorem}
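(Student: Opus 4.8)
The plan is to exploit two elementary facts about the quadratic form $I$. First, since $d$ is symmetric, $I$ is a symmetric bilinear form, so for $\mu, \nu \in \MX$ and $t \in \R$ we have the expansion $I(\mu + t\nu) = \Imu + 2t\,\Imunu + t^2 \Inu$. Second, Fubini's theorem gives the identity $\Imunu = \int_X d_\mu \, d\nu$, which lets us read off information about $d_\mu$ from pairings of $\mu$ against mass-zero measures. I would also use repeatedly the observation that a function $f \in C(X)$ satisfies $\int_X f \, d\nu = 0$ for every $\nu \in \MzeroX$ if and only if $f$ is constant: testing against $\nu = \delta_x - \delta_y$ shows $f(x) = f(y)$ for all $x, y \in X$.

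For part~(1) I would argue variationally. Fix a maximal $\mu \in \MoneX$ and an arbitrary $\nu \in \MzeroX$; then $\mu + t\nu \in \MoneX$ for every $t \in \R$, so maximality gives $\Imu \geq I(\mu + t\nu) = \Imu + 2t\,\Imunu + t^2 \Inu$, that is, $2t\,\Imunu + t^2 \Inu \leq 0$ for all $t$. Dividing by $t$ and letting $t \to 0^+$ and $t \to 0^-$ forces $\Imunu = 0$. Since $\nu \in \MzeroX$ was arbitrary and $\Imunu = \int_X d_\mu \, d\nu$, the observation above shows that $d_\mu$ is constant, say $d_\mu \equiv c$; thus $\mu$ is $d$-invariant. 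Finally $M(X) = \Imu = \int_X d_\mu \, d\mu = c\,\mu(X) = c$, so the value is $M(X)$.

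For part~(2), suppose $X$ is quasihypermetric and $\mu \in \MoneX$ is $d$-invariant with value $c$. Given any $\lambda \in \MoneX$, set $\nu = \lambda - \mu \in \MzeroX$. Then $\Imunu = \int_X d_\mu \, d\nu = c\,\nu(X) = 0$ and $\Imu = c\,\mu(X) = c$, so $I(\lambda) = \Imu + 2\Imunu + \Inu = c + \Inu \leq c$, using $\Inu \leq 0$ from quasihypermetricity. Hence $M(X) \leq c$, and since $\Imu = c$ with $\mu \in \MoneX$ we conclude $M(X) = c$ and $\mu$ is maximal.

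Parts~(3) and~(4) are uniqueness statements, proved by examining the difference $\nu = \mu_1 - \mu_2 \in \MzeroX$ of two candidate measures. For~(3), if $\mu_1, \mu_2$ are both maximal then by part~(1) both are $d$-invariant with value $M(X)$, so $d_\nu \equiv 0$ and hence $\Inu = \int_X d_\nu \, d\nu = 0$; strict quasihypermetricity then forces $\nu = 0$. For~(4), if $\mu_1, \mu_2$ are $d$-invariant with values $c_1, c_2$ then $d_\nu \equiv c_1 - c_2$ is constant, so $\Inu = (c_1 - c_2)\,\nu(X) = 0$ because $\nu$ has mass zero; strict quasihypermetricity again gives $\nu = 0$. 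The only step requiring genuine care is the first-order argument in part~(1)—that maximality forces the linear term $\Imunu$ to vanish for every mass-zero $\nu$—but once this orthogonality is combined with the identity $\Imunu = \int_X d_\mu \, d\nu$, the remaining deductions are immediate, and the other three parts follow as direct algebraic consequences of the bilinear expansion and the defining properties of the (strictly) quasihypermetric condition.
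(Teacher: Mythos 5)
Your proof is correct, and it follows essentially the standard route: the present paper states this theorem without proof (importing it from the companion paper [NW2]), and the argument there rests on exactly the ingredients you use — the bilinear expansion of $I(\mu + t\nu)$, the first-order vanishing $I(\mu,\nu) = 0$ for all $\nu \in \MzeroX$ at a maximal $\mu$, the Fubini identity $I(\mu,\nu) = \int_X d_\mu \, d\nu$ with test measures $\delta_x - \delta_y$, and differences of candidate measures for the uniqueness parts. All four parts of your write-up are sound as stated, including the observation that parts (3) and (4) reduce to $I(\mu_1 - \mu_2) = 0$ and strict quasihypermetricity.
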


We also recall from~\cite{NW2} how the ideas of maximality
and $d$-invariance can be generalized to the case when
maximal and $d$-invariant measures do not exist.
First, from section~\nwAref{chapter2}, we recall that
for any compact quasihypermetric space $\Xd$, the formula
\[
(\mu \mid \nu) : = -\Imunu,
\]
for $\mu, \nu \in \Mzero(X)$,
defines a semi-inner product on $\Mzero(X)$;
it is clear that the semi-inner product is an inner product
if and only if $X$~is strictly quasihypermetric.
The corresponding (semi)norm is of course given by the formula
\[
\| \mu \| : = (\mu \mid \mu)^\frac{1}{2},
\]
for $\mu \in \Mzero(X)$.

Second, from Definition~\nwBref{definitionA},
if ($X, d$) is a compact quasihypermetric space with
$M(X) < \infty$, then a sequence $\mu_n$ in $\Mone(X)$ is
called \textit{maximal} if $I(\mu_n) \to M(X)$ as $n \to \infty$.
Also, by Definition~\nwBref{2.24}, if $\Xd$ is a compact
quasihypermetric space, then a sequence $\mu_n$ in $\Mone(X)$
is called \textit{$d$-invariant with value~$c$}, for $c \in \R$,
if $\| \mu_m - \mu_n \| \to 0$ as $m, n \to \infty$ and
$d_{\mu_n} \to c \cdot \underline{1}$ in $C(X)$ as $n \to \infty$,
where $\underline{1}$ denotes the constant function
$\underline{1}(x) = 1$ for all $x \in X$.
The following result from~\cite{NW2} describes the relation
between maximal sequences and invariant sequences.

\begin{theorem}[= Theorem~\nwBref{theoremD}]
\label{theoremDagain}
Let $\Xd$ be a compact quasihypermetric space.
\begin{enumerate}
\item[(1)]
If $M(X) < \infty$ and $\mu_n$ is a maximal sequence
in $\Mone(X)$, then $\mu_n$~is a $d$-invariant sequence
with value $M(X)$.
\item[(2)]
If $\mu_n$ is a $d$-invariant sequence in $\Mone(X)$ with value~$c$,
then $M(X) = c < \infty$ and $\mu_n$ is a maximal sequence.
\end{enumerate}
\end{theorem}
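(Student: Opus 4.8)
The plan is to establish each part by a variational argument parallel to the proof of Theorem~\ref{2.2again} but carried out in the limit, with limits passed through integrals only against \emph{fixed} measures so as to avoid any need to bound total variations. For part~(1), I would first obtain the Cauchy condition $\|\mu_m - \mu_n\| \to 0$ from a parallelogram-type identity: since $\tfrac12(\mu_m + \mu_n) \in \MoneX$, we have $I(\tfrac12(\mu_m+\mu_n)) \le M(X)$, and expanding this together with $2 I(\mu_m, \mu_n) = \|\mu_m - \mu_n\|^2 + I(\mu_m) + I(\mu_n)$ gives
\[
\tfrac14 \|\mu_m - \mu_n\|^2 \le M(X) - \tfrac12 I(\mu_m) - \tfrac12 I(\mu_n),
\]
whose right-hand side tends to $0$ as $\mu_n$ is maximal. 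The more delicate half is the uniform convergence $d_{\mu_n} \to M(X)\,\underline{1}$. For fixed $x \in X$, writing $\delta_x$ for the unit point mass at $x$ and taking $\rho_s = (1-s)\mu_n + s\,\delta_x \in \MoneX$ for $s \in \R$, the inequality $I(\rho_s) \le M(X)$ (using $I(\delta_x)=0$ and $d_{\mu_n}(x) = I(\mu_n,\delta_x)$) is a downward parabola in $s$ bounded above by $M(X)$; maximizing over $s$ yields, after simplification, the quadratic estimate
\[
d_{\mu_n}(x)^2 - 2 M(X)\, d_{\mu_n}(x) + M(X)\, I(\mu_n) \le 0,
\]
which confines $d_{\mu_n}(x)$ to the interval $M(X) \pm \sqrt{M(X)\bigl(M(X) - I(\mu_n)\bigr)}$. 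Since $M(X) \ge 0$ and $I(\mu_n) \to M(X)$, this forces $\|d_{\mu_n} - M(X)\,\underline{1}\|_\infty \to 0$, establishing simultaneously that $d_{\mu_n}$ is asymptotically constant and that its value is exactly $M(X)$; with the Cauchy condition this completes part~(1).

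For part~(2), assume $\|\mu_m - \mu_n\| \to 0$ and $d_{\mu_n} \to c\,\underline{1}$ uniformly. The key device is that uniform convergence lets me pass the limit through integration against any fixed finite measure. Fixing $\nu_0 = \mu_1 \in \MoneX$ and writing $\lambda_n = \mu_n - \nu_0 \in \MzeroX$, I would expand $I(\mu_n) - I(\mu_n, \mu_m) = I(\nu_0, \lambda_n - \lambda_m) + I(\lambda_n, \lambda_n - \lambda_m)$; the first term equals $\int (d_{\mu_n} - d_{\mu_m})\, d\nu_0 \to 0$ by uniform convergence against the fixed $\nu_0$, and the second is bounded by $\|\lambda_n\|\,\|\mu_n - \mu_m\| \to 0$ via Cauchy--Schwarz for the inner product on $\MzeroX$. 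Hence $I(\mu_n) - I(\mu_n, \mu_m) \to 0$, and since $I(\mu_n, \mu_m) = \int d_{\mu_n}\, d\mu_m \to c$ whenever one index is held fixed, an iterated-limit argument (noting $I(\mu_n)$ is then Cauchy in $\R$) gives $I(\mu_n) \to c$; in particular $c < \infty$ and $c \le M(X)$. For the reverse inequality, for any $\nu \in \MoneX$ the quasihypermetric property gives $I(\nu - \mu_n) \le 0$, that is $I(\nu) \le 2 I(\nu, \mu_n) - I(\mu_n)$; letting $n \to \infty$ with $I(\nu, \mu_n) = \int d_{\mu_n}\, d\nu \to c$ and $I(\mu_n) \to c$ yields $I(\nu) \le c$, so $M(X) \le c$. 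Thus $M(X) = c < \infty$, and $I(\mu_n) \to M(X)$ says precisely that $\mu_n$ is maximal.

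The main obstacle in both parts is the ``diagonal'' quantity $I(\mu_n) = \int d_{\mu_n}\,d\mu_n$, in which $d_{\mu_n}$ is integrated against the non-fixed measure $\mu_n$ whose total variation need not be controlled along the sequence. In part~(1) this is bypassed entirely by the pointwise variational estimate, which pins down $d_{\mu_n}(x)$ uniformly without ever integrating against $\mu_n$; in part~(2) it is handled by replacing $I(\mu_n)$ with $I(\mu_n, \mu_m)$ and absorbing the error into the Cauchy--Schwarz bound on the mass-zero part $\lambda_n$. Checking that these limit passages are uniform in the remaining free index, so that the iterated limits legitimately collapse to $c$, is the one point requiring genuine care.
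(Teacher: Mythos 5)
Your proposal is correct, and there is in fact no in-paper proof to compare against: the theorem is recalled verbatim from the companion paper (Theorem~\nwBref{theoremD}), where it is established inside the functional-analytic framework developed in \cite{NW1, NW2} --- the semi-inner product on $\MzeroX$, its completion, the operator $T(\mu) = d_\mu$, and norm comparisons such as Theorem~\nwAref{2.10} (the bound $\|d_\mu\|_\infty \leq c \|\mu\|$ for $\mu \in \MzeroX$, which the present paper reuses in Theorem~\ref{strict2}). Your argument is a self-contained elementary alternative, and I have checked it line by line. In part~(1), the parallelogram computation $I\bigl(\tfrac12(\mu_m+\mu_n)\bigr) = \tfrac12 I(\mu_m) + \tfrac12 I(\mu_n) + \tfrac14 \|\mu_m - \mu_n\|^2 \leq M(X)$ does give the Cauchy condition, and the perturbation $\rho_s = (1-s)\mu_n + s\delta_x$ --- legitimate for \emph{all} $s \in \R$ precisely because $\MoneX$ consists of signed measures of mass~$1$ --- has leading coefficient $I(\delta_x - \mu_n) \leq 0$ in $s$, and maximizing does yield $d_{\mu_n}(x)^2 - 2M(X)\,d_{\mu_n}(x) + M(X)\,I(\mu_n) \leq 0$, hence the explicit rate $\|d_{\mu_n} - M(X)\,\underline{1}\|_\infty \leq \sqrt{M(X)\bigl(M(X) - I(\mu_n)\bigr)}$, with $M(X) \geq 0$ guaranteed by point masses; the only case you leave tacit is the degenerate one $I(\delta_x - \mu_n) = 0$, where boundedness above on $\R$ forces $d_{\mu_n}(x) = I(\mu_n) = 0$ and your inequality holds with equality, so nothing breaks. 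In part~(2), the decomposition via $\lambda_n = \mu_n - \mu_1$ is sound: Cauchy--Schwarz needs $\sup_n \|\lambda_n\| < \infty$, which follows since a $\|\cdot\|$-Cauchy sequence is $\|\cdot\|$-bounded (the point you rightly flagged), the iterated limit (fix $m = N$ from the joint estimate, then let $n \to \infty$) legitimately gives $I(\mu_n) \to c$, and the squeeze $I(\nu) \leq 2 I(\nu, \mu_n) - I(\mu_n) \to c$ for each fixed $\nu \in \MoneX$ gives $M(X) \leq c$. What your route buys is independence from the completion machinery together with a quantitative convergence rate in part~(1); what the cited framework buys is that its tools are developed once and reused throughout the series, as in Theorem~\ref{strict2} of this paper.
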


Theorem~\ref{strict2} and Corollary~\ref{strict2corollary}
below show that a maximal sequence or a maximal measure
on a subspace of a given space may under suitable conditions
also be maximal on the whole space. By Theorem~\ref{theoremDagain},
we may of course express these results equivalently in terms of
$d$-invariant sequences of measures of mass~$1$ or
$d$-invariant measures of mass~$1$.
The following result shows that a $d$-invariant measure of mass~$0$
on a compact subspace of a compact quasihypermetric space
must always be $d$-invariant on the whole space.

\begin{theorem}
\label{strict1}
Let $\Xd$ be a compact quasihypermetric space
and $Y \subseteq X$ a compact subspace.
If $\mu \in \Mzero(Y)$ is $d$-invariant on~$Y$ with value~$c$,
then $\mu$ is also $d$-invariant on~$X$ with value~$c$.
\end{theorem}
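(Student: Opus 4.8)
The plan is to work with the semi-inner product $\ip{\cdot}{\cdot} = -I(\cdot,\cdot)$ on $\Mzero(X)$ recalled above, together with the Cauchy--Schwarz inequality. Since $X$ is quasihypermetric the form $\ip{\cdot}{\cdot}$ is positive semidefinite on $\Mzero(X)$, so Cauchy--Schwarz is available even though $\ip{\cdot}{\cdot}$ need not be an inner product. The crux of the argument is the observation that the hypotheses force $\mu$ to have semi-norm zero; $d$-invariance of $\mu$ on all of $X$ then falls out immediately.

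Throughout I regard $\mu$ as an element of $\Mzero(X)$, a mass-zero measure on $Y$ being in particular a mass-zero measure on $X$ supported on $Y$. First I would show that $\|\mu\| = 0$. Since $\mu$ is carried by $Y$,
\[
\ip{\mu}{\mu} = -\Imu = -\int_X d_\mu \, d\mu = -\int_Y d_\mu \, d\mu .
\]
For $y \in Y$ we have $d_\mu(y) = \int_Y d(y,z)\,d\mu(z) = c$ by the $d$-invariance of $\mu$ on $Y$, so $d_\mu$ is constant equal to $c$ on $\supp \mu \subseteq Y$; since $\mu$ has total mass $0$, the last integral equals $-c\,\mu(Y) = 0$. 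Hence $\ip{\mu}{\mu} = 0$, that is, $\|\mu\| = 0$.

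Next, fix any point $y_0 \in Y$ and let $x \in X$ be arbitrary. Then $\delta_x - \delta_{y_0} \in \Mzero(X)$, and applying Cauchy--Schwarz to this measure and $\mu$ gives
\[
\bigl| \ip{\delta_x - \delta_{y_0}}{\mu} \bigr| \leq \| \delta_x - \delta_{y_0} \| \, \| \mu \| = 0 .
\]
Expanding the left-hand side, $\ip{\delta_x - \delta_{y_0}}{\mu} = -\bigl(d_\mu(x) - d_\mu(y_0)\bigr) = -\bigl(d_\mu(x) - c\bigr)$, using $d_\mu(y_0) = c$. Therefore $d_\mu(x) = c$, and as $x \in X$ was arbitrary this says precisely that $\mu$ is $d$-invariant on $X$ with value $c$.

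The only real content lies in the first display, where the $d$-invariance on $Y$ and the mass-zero condition combine to annihilate the quadratic term $\ip{\mu}{\mu}$; everything after that is a one-line application of Cauchy--Schwarz. I do not anticipate a serious obstacle, the main point to handle carefully being the identification of $\int_X d_\mu\,d\mu$ with $\int_Y d_\mu\,d\mu$ and the fact that $d_\mu$ equals $c$ on the support of $\mu$ rather than merely on $Y$ as a set.
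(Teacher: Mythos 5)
Your proof is correct and takes essentially the same route as the paper: both arguments begin by computing $I(\mu) = c\,\mu(Y) = 0$ from the $d$-invariance on~$Y$ and the mass-zero condition, and then pass from $I(\mu) = 0$, i.e.\ $\|\mu\| = 0$, to constancy of $d_\mu$ on all of~$X$. The only difference is that the paper outsources this last step to Lemma~\nwAref{2.7}, whereas you re-derive it inline via Cauchy--Schwarz for the semi-inner product on $\Mzero(X)$ (testing against $\delta_x - \delta_{y_0}$), which is precisely the content of the cited lemma.
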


\begin{proof}
Since $\mu \in \Mzero(Y)$ and $d_\mu$ is constant on~$Y$,
we have $I(\mu) = 0$, where we regard $I$ as a functional on~$\M(Y)$.
Hence, regarding~$\mu$ in the obvious way as a measure on~$X$,
we also have $I(\mu) = 0$, regarding~$I$ now as a functional on~$\MX$.
But Lemma~\nwAref{2.7} now implies
that $d_\mu$~is constant on~$X$, and since its value on~$Y$ is~$c$,
its value on~$X$ must also be~$c$. 
\end{proof}

\begin{lemma}
\label{RW1}
Let $\Xd$ be a compact quasihypermetric space
and let $Y \subseteq X$ be a \textup{(}compact\textup{)}
maximal strictly quasihypermetric subspace of~$X$.
Then for all $x \in X \setminus Y$ there exists a unique measure
$\mu_x \in \Mone(Y)$ such that $I(\mu_x - \delta_x) = 0$,
where $\delta_x$ denotes the point measure at~$x$.
\end{lemma}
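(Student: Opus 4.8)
The plan is to recast the defining equation in terms of the seminorm on $\Mzero(X)$ introduced above. Since $\mu_x$ and $\delta_x$ both have mass~$1$, the difference $\mu_x - \delta_x$ lies in $\Mzero(X)$, and because $\|\nu\|^2 = \ip{\nu}{\nu} = -I(\nu)$ for $\nu \in \Mzero(X)$, the required identity $I(\mu_x - \delta_x) = 0$ is equivalent to $\|\mu_x - \delta_x\| = 0$. Throughout I use that for measures supported on~$Y$ the value of $I$ is the same whether computed on~$Y$ or on~$X$, so that the seminorm of $\Mzero(X)$ restricts on $\Mzero(Y)$ to the genuine norm arising from the strict quasihypermetricity of~$Y$.

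For existence, the idea is to exploit the maximality of~$Y$. Set $Y' = Y \cup \{x\}$; this is a non-empty compact subspace of~$X$ properly containing~$Y$, and it is quasihypermetric since every $\nu \in \Mzero(Y')$ may be viewed as an element of $\Mzero(X)$. By the maximality of~$Y$, the space $Y'$ cannot be strictly quasihypermetric, so there is a nonzero measure $\lambda \in \Mzero(Y')$ with $I(\lambda) = 0$, equivalently $\|\lambda\| = 0$. Writing $\lambda = \lambda_Y + a\,\delta_x$, where $\lambda_Y$ is the restriction of $\lambda$ to~$Y$ and $a = \lambda(\{x\})$, I would first argue that $a \neq 0$: if $a = 0$ then $\lambda = \lambda_Y$ is a nonzero element of $\Mzero(Y)$ of seminorm zero, contradicting the strict quasihypermetricity of~$Y$. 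Since $\lambda$ has mass zero, $\lambda_Y$ has mass~$-a$, so $\mu_x := -\tfrac{1}{a}\lambda_Y$ lies in $\Mone(Y)$; and then $\mu_x - \delta_x = -\tfrac{1}{a}(\lambda_Y + a\,\delta_x) = -\tfrac{1}{a}\lambda$, whence $\|\mu_x - \delta_x\| = |a|^{-1}\|\lambda\| = 0$, as required.

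Uniqueness is then straightforward. If $\mu_1, \mu_2 \in \Mone(Y)$ both satisfy the equation, then $\mu_1 - \mu_2 = (\mu_1 - \delta_x) - (\mu_2 - \delta_x) \in \Mzero(Y)$, and the triangle inequality for the seminorm gives $\|\mu_1 - \mu_2\| \le \|\mu_1 - \delta_x\| + \|\mu_2 - \delta_x\| = 0$; since the seminorm is a genuine norm on $\Mzero(Y)$, this forces $\mu_1 = \mu_2$.

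The substance of the argument is concentrated in the existence part, and the one step that genuinely requires care is the passage from the failure of strict quasihypermetricity on~$Y'$ to a witness measure~$\lambda$ whose component at~$x$ is nonzero; everything else is bookkeeping with mass and the seminorm. I expect the decomposition $\lambda = \lambda_Y + a\,\delta_x$ together with the observation that $a \neq 0$ to be the crux, as it is exactly what converts an \emph{abstract} degeneracy of the quadratic form on~$Y'$ into the \emph{explicit} mass-one measure $\mu_x$ on~$Y$.
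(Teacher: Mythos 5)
Your proposal is correct, and the existence half coincides with the paper's own argument: both use the maximality of~$Y$ to produce a nonzero $\lambda \in \Mzero(Y \cup \{x\})$ with $I(\lambda) = 0$, rule out $\lambda(\{x\}) = 0$ via the strict quasihypermetricity of~$Y$, and rescale the restriction of~$\lambda$ to~$Y$ to obtain $\mu_x$ -- you have correctly identified this decomposition as the crux. Where you genuinely diverge is in the uniqueness step. The paper invokes part~(5) of Lemma~\nwAref{2.7}: from $I(\mu_i - \delta_x) = 0$ it deduces that each $d_{\mu_i - \delta_x}$ is a constant function on~$X$, hence $d_{\mu_1 - \mu_2} = c \cdot \underline{1}$ for some $c \in \R$, hence $I(\mu_1 - \mu_2) = c \, (\mu_1 - \mu_2)(X) = 0$ since $\mu_1 - \mu_2$ has mass zero, and strictness on~$Y$ finishes. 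You instead argue entirely inside the semi-inner-product structure on $\Mzero(X)$: each $\mu_i - \delta_x$ is a null vector of the seminorm, so by the triangle inequality (equivalently, Cauchy--Schwarz, which is valid for any positive semidefinite symmetric form -- and the positive semidefiniteness is exactly where the quasihypermetricity of~$X$ enters your argument, just as it enters the paper's through the cited lemma) the difference $\mu_1 - \mu_2 \in \Mzero(Y)$ is also null, and strictness on~$Y$ makes the seminorm a genuine norm there. Your route is more self-contained, needing nothing beyond the semi-inner product the paper has already set up; the paper's route has the incidental advantage of recording that $d_{\mu_x - \delta_x}$ is constant on~$X$, a fact that is immediately reused (via a fresh appeal to the same lemma) in the proof of Theorem~\ref{RW2}. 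As a proof of the lemma itself, yours is complete and, if anything, slightly more elementary.
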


\begin{proof}
Fix $x \in X \setminus Y$. By the maximality of~$Y$,
the subspace $Y \cup \{x\}$ is non-strictly quasihypermetric,
so there exists $\nu \in \Mzero(Y \cup \{x\})$ such that
$I(\nu) = 0$ and $\nu \neq 0$.
Write $\nu = \phi + \alpha \delta_x$
for some $\phi \in \M(Y)$ and some $\alpha \in \R$ such that
$\phi(Y) + \alpha = 0$.

If $\alpha = 0$, we have $\nu = \phi \in \Mzero(Y)$ and $I(\nu) = 0$,
which implies that $\nu = 0$ since $Y$ is strictly quasihypermetric,
and this is a contradiction.
Therefore $\alpha \neq 0$, and clearly $\mu_x = -(1/\alpha) \phi$
satisfies $I(\mu_x - \delta_x) = 0$.

Suppose that there exist $\mu_1, \mu_2 \in \Mone(Y)$
such that $I(\mu_1 - \delta_x) = I(\mu_2 - \delta_x) = 0$.
Then by part~(5) of Lemma~\nwAref{2.7} we find that
$d_{\mu_1-\delta_x}$ and $d_{\mu_2-\delta_x}$ are both
constant functions on~$X$. Hence there exists $c \in \R$ such that
$d_{\mu_1-\mu_2} = c \cdot \underline{1}$.
It follows that $I(\mu_1-\mu_2) = 0$,
and since $Y$ is strictly quasihypermetric, we have $\mu_1 = \mu_2$.
\end{proof}

\begin{theorem}
\label{RW2}
Let $\Xd$ be a compact quasihypermetric space and suppose that $Y$
is a finite maximal strictly quasihypermetric subspace of~$X$.
Then $X$ is finite. 
\end{theorem}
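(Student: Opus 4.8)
The plan is to use Lemma~\ref{RW1} to attach, to each point $x \in X \setminus Y$, the unique measure $\mu_x \in \Mone(Y)$ with $I(\mu_x - \delta_x) = 0$, and then to show that the map $x \mapsto \mu_x$ is injective. Since $Y$ is finite, $\Mone(Y)$ is a finite-dimensional affine space, but injectivity alone does not bound $X$; the real goal must be to show that only finitely many distinct measures $\mu_x$ can arise, or more directly that $X \setminus Y$ must be finite. So the strategy I would pursue is to extract strong rigidity from the condition $I(\mu_x - \delta_x) = 0$: by part~(5) of Lemma~\nwBref{2.7} (as used in the proof of Lemma~\ref{RW1}), this condition forces $d_{\mu_x - \delta_x}$ to be a constant function on all of~$X$. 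Writing $\mu_x - \delta_x \in \Mzero(X)$ and letting $c_x$ denote that constant, I would first record the key relation: for every $z \in X$,
\[
d_{\mu_x}(z) - d(z, x) = c_x .
\]

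First I would exploit this relation to understand the geometry imposed on $X \setminus Y$. Taking $z = x$ gives $c_x = d_{\mu_x}(x)$, and taking $z \in Y$ gives $d_{\mu_x}(z) - d(z, x) = c_x$, so that $d(z,x) = d_{\mu_x}(z) - c_x$ is completely determined, as a function of $z \in Y$, by the pair $(\mu_x, c_x)$. Thus the distance profile of~$x$ to the finite set~$Y$ is encoded by $\mu_x$ and the scalar~$c_x$. The crucial step is then to show that the assignment $x \mapsto \mu_x$ is not merely injective but has finite image, or alternatively to argue directly that $X \setminus Y$ cannot contain infinitely many points. Here I would apply the relation to \emph{two} distinct points $x, x' \in X \setminus Y$: evaluating the identity for~$x$ at~$z = x'$ and the identity for~$x'$ at~$z = x$, and combining with the symmetry $d(x, x') = d(x', x)$, should yield an equation relating $c_x$, $c_{x'}$, and the quantities $I(\mu_x, \mu_{x'})$, $d_{\mu_x}(x')$, $d_{\mu_{x'}}(x)$. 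I expect this to force a rigid linear-algebraic structure that, together with the strict quasihypermetricity of~$Y$, pins down the measures.

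The cleanest route, and the one I would commit to, is to show that $X$ itself must be strictly quasihypermetric, whence by the \emph{maximality} of~$Y$ we would conclude $X = Y$, giving finiteness immediately. To see this, suppose $\mu \in \Mzero(X)$ satisfies $I(\mu) = 0$. I would try to replace each $\delta_x$ appearing in~$\mu$ (for $x \in X \setminus Y$) by its surrogate $\mu_x \in \Mone(Y)$, using that $\mu_x - \delta_x$ is $d$-invariant on~$X$ with value~$c_x$. Concretely, define $\tilde\mu \in \Mzero(Y)$ by pushing the mass of~$\mu$ off of $X \setminus Y$ via the $\mu_x$; because each difference $\mu_x - \delta_x$ lies in the null space of the semi-inner product $(\,\cdot \mid \cdot\,)$ on $\Mzero(X)$ (as $I(\mu_x - \delta_x) = 0$ and $\mu_x - \delta_x$ is invariant, so it is orthogonal to everything in $\Mzero(X)$ by the Cauchy--Schwarz argument underlying the semi-inner product), the substitution preserves $I$. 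Thus $I(\tilde\mu) = I(\mu) = 0$ with $\tilde\mu \in \Mzero(Y)$, and strict quasihypermetricity of~$Y$ forces $\tilde\mu = 0$, which should in turn force $\mu = 0$.

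The main obstacle I anticipate is precisely the last reduction: controlling how the substitution $\delta_x \mapsto \mu_x$ interacts with the mass-balance condition $\mu(X) = 0$, and verifying that $\tilde\mu = 0$ genuinely propagates back to $\mu = 0$ rather than merely to a constraint on the $Y$-part of~$\mu$. If a measure~$\mu$ with infinite support on $X \setminus Y$ is in play, the substitution must be shown to be well-defined as a measure, which is automatic if $X \setminus Y$ turns out to be finite but is circular otherwise; so I would first establish that the potential values of~$\mu_x$ form a discrete (indeed finite) set by a compactness and continuity argument — using that $x \mapsto \delta_x$ is weak-$*$ continuous and that the invariance relation varies continuously — and then handle the substitution on finite sets and pass to the limit. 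Reconciling this continuity argument with the need to prove finiteness (and not assume it) is the delicate point where I would concentrate the real work.
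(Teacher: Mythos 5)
Your opening is on the right track --- Lemma~\ref{RW1} together with the invariance of $\mu_x - \delta_x$, giving $d(z,x) = d_{\mu_x}(z) - c_x$ for all $z \in X$, is exactly the paper's springboard --- but the route you then commit to aims at a false statement. You propose to show that $X$ itself is strictly quasihypermetric, so that maximality of $Y$ forces $X = Y$. The theorem claims only that $X$ is \emph{finite}, and $X$ need not equal $Y$: the paper itself exhibits finite quasihypermetric spaces that are not strictly quasihypermetric yet contain proper maximal strictly quasihypermetric subspaces (the $4$-point circle space of Example~\nwBref{4ptexample}, and the spaces of Theorem~\ref{Knr20.2} and Remark~\ref{Knr25}). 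Your substitution argument collapses precisely where you flagged it: each $\mu_x - \delta_x$ is a \emph{nonzero} measure in $\Mzero(X)$ with $I(\mu_x - \delta_x) = 0$, and your substitution sends it to the zero measure, so from $\tilde\mu = 0$ one can never recover $\mu = 0$; these null measures are the irremovable kernel of the substitution, not a technicality to be patched. The fallback you sketch (finite image of $x \mapsto \mu_x$ by compactness and continuity) is also hopeless in that form: the map $x \mapsto \mu_x$ is injective (if $\mu_x = \mu_{x'}$ then $d(\cdot,x) - d(\cdot,x')$ is constant on~$X$, and evaluating at $x$ and at $x'$ gives $d(x,x') = -d(x,x')$, hence $x = x'$), so its image is finite only if $X \setminus Y$ already is --- circular, as you yourself suspected.

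The missing idea is that your key relation should be read as a \emph{dimension count}, not as a strictness or cardinality statement. Writing $Y = \{y_1, \dots, y_n\}$ and $i(x) = d_{\delta_x} = d(\cdot, x)$, the identity $i(x) = d_{\mu_x} - c_x \cdot \underline{1}$, valid on all of~$X$ and with $\mu_x$ supported on~$Y$, places every distance function $i(x)$, $x \in X$, in the fixed finite-dimensional subspace $[\, i(y_1), \dots, i(y_n), \underline{1} \,]$ of $C(X)$. Hence the image of the operator $T \colon \MX \to C(X)$, $T(\mu) = d_\mu$, is finite-dimensional (part~(2) of Lemma~\nwAref{newlemma}), and Theorem~\nwAref{newtheorem} states that a compact metric space for which $\im T$ is finite-dimensional must be finite. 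That finite-dimensionality of the span of the distance functions --- not strict quasihypermetricity of~$X$, and not finiteness of the set $\{\mu_x\}$ --- is what your relation $d(z,x) = d_{\mu_x}(z) - c_x$ was set up to deliver, and it is the step your proposal never reaches.
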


\begin{proof}
Let $Y = \{ y_1, \dots, y_n \}$. By Lemma~\ref{RW1}, we have
for each $x \in X \setminus Y$ a measure $\mu_x \in \Mone(Y)$
such that $I(\mu_x - \delta_x) = 0$. By part~(5) of Lemma~\nwAref{2.7},
there exist constants~$c_x$ such that
$d_{\mu_x-\delta_x} = c_x \cdot \underline{1}$ on~$X$ for each~$x$.
Writing $i(x) = d_{\delta_x}$ for $x \in X$, we have
$i(x) = d_{\mu_x} - c_x \cdot \underline{1}$ on~$X$.
Since $\mu_x$ is supported on~$Y$, we have
\[
i(x) \in [ i(y_1), \dots, i(y_n), \underline{1} ],
\]
where the brackets $[ \ldots ]$ denote the linear span.
Recall (see section~\nwAref{chapter1})
that $T \colon \MX \to C(X)$ is the linear map defined
by setting $T(\mu) = d_\mu$ for $\mu\in \MX$.
Then by part~(2) of Lemma~\nwAref{newlemma}, we have
\[
\im T \subseteq [ i(y_1), \dots, i(y_n), \underline{1} ].
\]
Now Theorem~\nwAref{newtheorem} implies that $X$ is finite,
as required.
\end{proof}

\begin{theorem}
\label{strict2}
Let $\Xd$ be a compact quasihypermetric space with $M(X) < \infty$,
and suppose that $Y \subseteq X$ is a \textup{(}compact\textup{)}
maximal strictly quasihypermetric subspace of~$X$. Then each
maximal sequence $\mu_n$ on the subspace~$Y$, when considered as
a sequence in $\Mone(Y)$, is also a maximal sequence on the space~$X$,
when considered as a sequence in $\Mone(X)$. Also, $M(Y) = M(X)$.
\end{theorem}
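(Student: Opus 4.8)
The plan is to exhibit a single maximal sequence on $Y$, prove that it is $d$-invariant on the whole of $X$, and then read off both conclusions from Theorem~\ref{theoremDagain}. First I would record the easy inequality $M(Y) \le M(X)$: every measure in $\Mone(Y)$ may be viewed as a measure in $\Mone(X)$ supported on $Y$ without changing the value of $I$, so the supremum defining $M(Y)$ is taken over a subcollection of that defining $M(X)$. In particular $M(Y) < \infty$, so Theorem~\ref{theoremDagain}(1) applies on $Y$: any maximal sequence $\mu_n$ on $Y$ is $d$-invariant on $Y$ with value $M(Y)$, meaning that $\|\mu_m - \mu_n\| \to 0$ (a seminorm computed in $\Mzero(Y)$, which agrees with the one in $\Mzero(X)$ since the measures involved are supported on $Y$) and $d_{\mu_n} \to M(Y)$ uniformly on $Y$.

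The decisive step is the following vanishing statement. By Lemma~\ref{RW1} there is, for each $x \in X \setminus Y$, a unique $\mu_x \in \Mone(Y)$ with $I(\mu_x - \delta_x) = 0$; by part~(5) of Lemma~\nwAref{2.7} the mass-zero measure $\eta_x = \mu_x - \delta_x$ satisfies $d_{\eta_x} = c_x \cdot \underline{1}$ on $X$ for some constant $c_x$, and I claim that $c_x = 0$. For any fixed $\rho \in \Mone(X)$ and any $t \in \R$ the measure $\rho + t\eta_x$ again lies in $\Mone(X)$, and expanding $I$ while using $I(\eta_x) = 0$ and $I(\rho, \eta_x) = \int d_{\eta_x}\,d\rho = c_x$ gives $I(\rho + t\eta_x) = I(\rho) + 2t c_x$. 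Since this is bounded above by $M(X) < \infty$ for every $t \in \R$, we must have $c_x = 0$; this is exactly where the hypothesis $M(X) < \infty$ is used. Consequently $d_{\mu_x} = d_{\delta_x}$ on all of $X$, that is, $d_{\mu_x}(z) = d(x,z)$ for every $z \in X$.

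Next I would upgrade the convergence of $d_{\mu_n}$ from $Y$ to all of $X$. For $x \in X \setminus Y$, substituting $d(x,z) = d_{\mu_x}(z)$ and using that $\mu_n$ is supported on $Y$, I compute $d_{\mu_n}(x) = \int_Y d_{\mu_x}(z)\,d\mu_n(z) = I(\mu_x, \mu_n) = \int_Y d_{\mu_n}(w)\,d\mu_x(w) \to M(Y)$, since $d_{\mu_n} \to M(Y)$ uniformly on $Y$ and $\mu_x(Y) = 1$; hence $d_{\mu_n} \to M(Y)$ pointwise on $X$. To obtain uniform convergence I would control the oscillation of $g_{mn} = d_{\mu_m} - d_{\mu_n} = d_{\mu_m - \mu_n}$: for $x, x' \in X$, Cauchy--Schwarz for the semi-inner product on $\Mzero(X)$ gives $|g_{mn}(x) - g_{mn}(x')| = |I(\mu_m - \mu_n,\, \delta_x - \delta_{x'})| \le \|\mu_m - \mu_n\|\,\|\delta_x - \delta_{x'}\| = \|\mu_m - \mu_n\|\sqrt{2\,d(x,x')}$, which tends to $0$ uniformly in $x, x'$ because $\|\mu_m - \mu_n\| \to 0$ and $X$ is bounded. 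Since $g_{mn}$ also tends to $0$ at a single point of $Y$, the sequence $(d_{\mu_n})$ is uniformly Cauchy in $C(X)$, and its uniform limit coincides with the pointwise limit, so $d_{\mu_n} \to M(Y)\cdot\underline{1}$ in $C(X)$.

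Finally, having established $\|\mu_m - \mu_n\| \to 0$ and $d_{\mu_n} \to M(Y)\cdot\underline{1}$ in $C(X)$, I conclude that $\mu_n$ is $d$-invariant on $X$ with value $M(Y)$; Theorem~\ref{theoremDagain}(2) then yields $M(X) = M(Y)$ and that $\mu_n$ is maximal on $X$, which are the two assertions of the theorem. I expect the vanishing $c_x = 0$ to be the main obstacle and the one genuinely new idea: the seminorm control is inherited directly from the invariance already known on $Y$, and the oscillation estimate is routine, whereas the identity $c_x = 0$ is what forces the common constant limit $M(Y)$ across $X \setminus Y$ and, as the computation shows, is precisely the point at which the finiteness of $M(X)$ is indispensable.
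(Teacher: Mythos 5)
Your proof is correct, and it takes a recognizably different --- and somewhat leaner --- route than the paper's. The two arguments share a skeleton: apply Theorem~\ref{theoremDagain}(1) on~$Y$, show $d_{\mu_n}(x) \to M(Y)$ for each $x \in X \setminus Y$, upgrade to uniform convergence on~$X$, and close with Theorem~\ref{theoremDagain}(2). The difference is in the middle. The paper, for each~$x$, produces a non-zero null measure $\nu \in \Mzero(Z)$ on $Z = Y \cup \{x\}$ (up to a scalar this is your $\eta_x = \mu_x - \delta_x$, though the paper does not route this through Lemma~\ref{RW1}), gets $d_\nu = 0$ on~$X$ by quoting Theorem~\nwAref{2.9.5} together with Theorem~\ref{strict1}, then introduces an \emph{auxiliary} maximal sequence $\phi_n$ on~$Z$, corrects it by multiples of~$\nu$ to a maximal sequence $\psi_n \in \Mone(Y)$, and interleaves $\mu_n$ with $\psi_n$ to deduce $\|\mu_n - \psi_n\| \to 0$ and hence $d_{\mu_n}(x) \to M(Z) = M(Y)$; the uniform upgrade is obtained (twice) from the sup-norm bound $\|d_\sigma\|_\infty \leq c\|\sigma\|$ of Theorem~\nwAref{2.10}. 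You dispense with the auxiliary sequence and the interleaving entirely: once $c_x = 0$ yields the representation $d_{\delta_x} = d_{\mu_x}$ on~$X$, the single identity $d_{\mu_n}(x) = \int_Y d_{\mu_n}\,d\mu_x$ gives the pointwise limit directly, and your Cauchy--Schwarz oscillation estimate $|g_{mn}(x) - g_{mn}(x')| \leq \|\mu_m - \mu_n\|\sqrt{2\,d(x,x')}$, anchored at a point of~$Y$ where convergence is already known, stands in for Theorem~\nwAref{2.10} using nothing beyond the semi-inner product and the boundedness of~$X$. Likewise, your direct argument that $c_x = 0$ --- from $I(\rho + t\eta_x) = I(\rho) + 2tc_x \leq M(X)$ for all $t \in \R$ --- reconstitutes the content of Theorem~\nwAref{2.9.5} and isolates exactly where $M(X) < \infty$ is indispensable. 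What the paper's route buys is reuse of its established machinery and the intermediate identity $M(Y) = M(Z)$; what yours buys is brevity and self-containedness. One cosmetic remark: in the pointwise step, the convergence $\int_Y d_{\mu_n}\,d\mu_x \to M(Y)$ rests on the finite \emph{total variation} of the signed measure $\mu_x$ together with $\mu_x(Y) = 1$, not on the mass alone, since $\mu_x$ need not be positive; this is automatic for a finite signed Borel measure, but deserves a word.
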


\begin{proof}
Let $\mu_n$ be a maximal sequence on~$Y$ and consider any
$x \in X \setminus Y$. Write $Z = Y \cup \{x\}$.
Since $Y$ is a maximal strictly quasihypermetric subspace of~$X$,
the subspace~$Z$ is quasihypermetric but not strictly quasihypermetric,
so there exists a non-zero measure $\nu \in \Mzero(Z)$
such that $I(\nu) = 0$.
By Lemma~\nwAref{2.7}, $\nu$ is $d$-invariant on~$Z$
and by Theorem~\nwAref{2.9.5} it follows that $d_\nu = 0$ on~$Z$,
and so, by Theorem~\ref{strict1}, we have $d_\nu = 0$ on~$X$.
Observe that we must have $\nu(\{x\}) \neq 0$, since $\nu \neq 0$ and
$Y$ is strictly quasihypermetric.

Since $M(Z) \leq M(X) < \infty$, there exists a maximal sequence
$\phi_n$ in $\Mone(Z)$ for the subspace~$Z$.
Hence for all $n \in \N$ we can define
\[
\psi_n = \phi_n - \lambda_n \nu,
\]
where
\[
\lambda_n = \frac{\phi_n \bigl(\{x\}\bigr)}{\nu\bigl(\{x\}\bigr)}.
\]
Since $d_\nu = 0$ on~$X$, we obtain
$I(\psi_m -\psi_n) = I(\phi_m - \phi_n)$ for all $m, n \in \N$
and $d_{\psi_n} = d_{\phi_n}$ on~$X$.
Now $\phi_n$ is a maximal sequence on~$Z$, and hence a $d$-invariant
sequence, by Theorem~\ref{theoremDagain}, and so
$\| \psi_m -\psi_n \| = \| \phi_m - \phi_n \| \to 0$
as $m, n \to \infty$ and $d_{\psi_n} = d_{\phi_n} \to M(Z)$
uniformly on~$Z$ and hence on~$Y$.
Noting that $\psi_n(\{x\}) = 0$ and applying Theorem~\ref{theoremDagain} 
to the sequence~$\psi_n$ in $\Mone(Y)$, we find that
$\psi_n$ is a maximal sequence on~$Y$ and that $M(Y) = M(Z)$.

The sequence $\mu_1, \psi_1, \mu_2, \psi_2, \dots$ is of course
a maximal sequence on~$Y$ and therefore $d$-invariant,
and hence, regarding $\mu_n$ and~$\psi_n$ as measures on~$X$,
we have $\| \mu_n -\psi_n \| \to 0$ as $n \to \infty$.
By Theorem~\nwAref{2.10}, there is a constant $c \geq 0$ such that
$\| d_{\mu_n -\psi_n} \|_\infty \leq c \| \mu_n - \psi_n \|$,
and so $d_{\mu_n} - d_{\psi_n} \to 0$ uniformly on~$X$.
Since $\phi_n$ is a maximal and hence $d$-invariant sequence
on~$Z$, it follows that
$d_{\psi_n}(x) = d_{\phi_n}(x) \to M(Z) = M(Y)$ as $n \to \infty$.
But $d_{\mu_n}(x) - d_{\psi_n}(x) \to 0$ as $n \to \infty$,
and therefore $d_{\mu_n}(x) \to M(Y)$ as $n \to \infty$.

Since $x \in X \setminus Y$ was arbitrary and
$\mu_n$ is a maximal and therefore $d$-invariant sequence on~$Y$,
we have $d_{\mu_n} \to M(Y) \cdot \underline{1}$ uniformly on~$Y$,
and we find that $d_{\mu_n} \to M(Y) \cdot \underline{1}$
pointwise on~$X$. Applying Theorem~\nwAref{2.10} again, we have
\[
\| d_{\mu_m} - d_{\mu_n} \|_\infty
 =
\| d_{\mu_m -\mu_n} \|_\infty
 \leq
c \| \mu_m - \mu_n \|
 \to
0
\]
as $m, n \to \infty$, and hence there exists $f \in C(X)$
such that $d_{\mu_n} \to f$ uniformly on~$X$.
Therefore, $d_{\mu_n} \to M(Y) \cdot \underline{1}$ uniformly on~$X$
and so, since $\| \mu_n - \mu_m \| \to 0$ as $n, m \to \infty$,
we conclude that $\mu_n$ is a $d$-invariant and hence maximal sequence
on~$X$. It follows immediately that $M(Y) = M(X)$.
\end{proof}

A maximal measure $\mu \in \Mone(Y)$ of course defines
a degenerate maximal sequence $\mu, \mu, \dots$ on~$Y$,
and since maximal and $d$-invariant measures coincide on
quasihypermetric spaces by Theorem~\ref{2.2again},
we obtain the following corollary.

\begin{corollary}
\label{strict2corollary}
Let $X$ be a compact quasihypermetric space such that $M(X) < \infty$
and let $Y$ be a \textup{(}compact\textup{)} maximal
strictly quasihypermetric subspace of~$X$. Then we have the following.
\begin{enumerate}
\item[(1)]
If $\mu \in \Mone(Y)$ is a maximal measure on~$Y$, then $\mu$ is
a maximal measure on~$X$.
\item[(2)]
If $\mu \in \Mone(Y)$ is such that $d_\mu$ is constant on~$Y$,
then $d_\mu$ is also constant on~$X$, and $M(Y) = M(X)$.
\end{enumerate}
\end{corollary}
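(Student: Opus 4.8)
The plan is to deduce both parts from Theorem~\ref{strict2} together with the equivalence of maximal and $d$-invariant measures on quasihypermetric spaces furnished by Theorem~\ref{2.2again}, using the device of passing from a single measure to the constant (degenerate) sequence it generates.

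For part~(1), I would begin with a maximal measure $\mu \in \Mone(Y)$ and form the constant sequence $\mu, \mu, \dots$. Since $I(\mu) = M(Y)$ and $M(Y) \leq M(X) < \infty$, this sequence trivially satisfies $I(\mu) \to M(Y)$, so it is a maximal sequence on~$Y$ in the sense of Definition~\nwBref{definitionA}. Theorem~\ref{strict2} then applies directly, its hypotheses $M(X) < \infty$ and $Y$ maximal strictly quasihypermetric being exactly those assumed here, and yields that the same constant sequence is a maximal sequence on~$X$. But a constant sequence $\mu, \mu, \dots$ is maximal on~$X$ precisely when $I(\mu) = M(X)$, which is the assertion that $\mu$ is a maximal measure on~$X$.

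For part~(2), I would first observe that the hypothesis that $d_\mu$ is constant on~$Y$ says exactly that $\mu$ is $d$-invariant on~$Y$, with value equal to that constant, say~$c$. Since $Y$ is strictly quasihypermetric and hence quasihypermetric, part~(2) of Theorem~\ref{2.2again} (applied to~$Y$) shows that $\mu$ is a maximal measure on~$Y$ and that $M(Y) = c$. Part~(1) of the present corollary then gives that $\mu$ is maximal on~$X$, and part~(1) of Theorem~\ref{2.2again} (applied to~$X$, which is quasihypermetric by hypothesis) shows in turn that $\mu$ is $d$-invariant on~$X$ with value $M(X)$; in particular $d_\mu$ is constant on~$X$. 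Finally, since the value of this constant restricted to the subspace~$Y$ is simultaneously $c = M(Y)$ and $M(X)$, we conclude $M(Y) = M(X)$.

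There is no serious obstacle here: the corollary is essentially a translation of Theorem~\ref{strict2} from the language of maximal sequences to that of maximal (equivalently, $d$-invariant) measures. The only points requiring care are the bookkeeping of the quasihypermetric property — one must note that $Y$, being strictly quasihypermetric, is quasihypermetric, so that Theorem~\ref{2.2again} may legitimately be invoked on both~$Y$ and~$X$ — together with the elementary remark that a constant sequence is maximal exactly when its common value of~$I$ attains the relevant supremum.
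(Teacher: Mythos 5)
Your proof is correct and follows essentially the same route as the paper, which obtains the corollary by the same remark: a maximal measure on~$Y$ defines a degenerate maximal sequence $\mu, \mu, \dots$, Theorem~\ref{strict2} transfers it to~$X$, and the coincidence of maximal and $d$-invariant measures on quasihypermetric spaces (Theorem~\ref{2.2again}) converts between the two formulations in part~(2). Your version merely spells out the bookkeeping (e.g.\ that $M(Y) \leq M(X) < \infty$ licenses the sequence definitions, and that comparing the constant value of $d_\mu$ on $Y$ and on $X$ yields $M(Y) = M(X)$, which Theorem~\ref{strict2} in fact also states directly) that the paper leaves implicit.
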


\begin{theorem}
\label{RW3}
Let $(X = \{x_1, \dots, x_n\}, d)$ with $n > 1$
be a finite quasihypermetric space and let $Y \subseteq X$
be a maximal strictly quasihypermetric subspace of~$X$.
Further, let $D = \bigl( d(x_i, x_j) \bigr)_{i,j=1}^n$
be the distance matrix of~$X$ and let $r$ be the rank of~$D$.
Then
\[
|Y|
 =
\begin{cases}
  r,   & \mbox{if } M(X) < \infty, \\
  r-1, & \mbox{if } M(X) = \infty.
\end{cases}
\]
\end{theorem}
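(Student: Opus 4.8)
The plan is to reduce the entire statement to a single dimension count: I will show that the space $V_0$ of $d$-invariant measures in $\Mzero(X)$ has dimension exactly $n - |Y|$, and then read off the rank $r$ of $D$ from $\dim V_0$ according to whether $M(X)$ is finite. Throughout I identify $\M(X)$ with $\R^n$ via $\mu \leftrightarrow \bigl(\mu(\{x_1\}),\dots,\mu(\{x_n\})\bigr)$, so that $I(\mu,\nu) = \mu^{T} D \nu$, the map $T$ of the paper becomes multiplication by $D$, and $\Mzero(X)$ is the hyperplane of mass-zero vectors. A measure is $d$-invariant precisely when $D\mu \in \R\,\underline{1}$ with $\underline{1} = (1,\dots,1)$; in particular $V_0 = \{\mu \in \Mzero(X) : D\mu \in \R\,\underline{1}\}$ is the radical of the negative semidefinite form $I$ on $\Mzero(X)$.

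The heart of the argument is to show that the measures $\eta_x := \mu_x - \delta_x$, for $x \in X \setminus Y$, where $\mu_x \in \Mone(Y)$ is the measure furnished by Lemma~\ref{RW1}, form a basis of $V_0$. Each $\eta_x$ lies in $V_0$: it has mass~$0$, and by part~(5) of Lemma~\nwAref{2.7} (exactly as in the proof of Theorem~\ref{RW2}) it is $d$-invariant on~$X$. They are linearly independent because $\eta_x$ is the only one of these measures charging the point $x \in X \setminus Y$ (each $\mu_{x'}$ is supported on~$Y$), so evaluating masses at points of $X \setminus Y$ forces every coefficient in a vanishing combination $\sum_x t_x \eta_x = 0$ to be zero. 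For spanning, given $\eta \in V_0$ I set $t_x = -\eta(\{x\})$; the measure $\eta - \sum_x t_x \eta_x$ then vanishes on $X \setminus Y$, so it is supported on~$Y$ and still lies in~$V_0$. Being a $d$-invariant measure of mass~$0$ on the strictly quasihypermetric space~$Y$, its $I$-value equals (its constant value)$\,\times\,$(its mass)$\,=0$, so strict quasihypermetricity forces it to be the zero measure. Hence $\eta$ lies in the span of the $\eta_x$, and $\dim V_0 = |X \setminus Y| = n - |Y|$.

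To pass to the rank, write $V$ for the space of all $d$-invariant measures, so $V = T^{-1}(\R\,\underline{1})$ and $\dim V = \dim\ker T + \dim(\R\,\underline{1} \cap \im T)$. I compare $\dim V$ with $\dim V_0$ using the mass functional on~$V$. If $M(X) < \infty$, then since $X$ is finite and $I$ is a concave quadratic (as $I \le 0$ on $\Mzero(X)$) bounded above on $\Mone(X)$, the supremum is attained, giving by Theorem~\ref{2.2again} a $d$-invariant measure $\mu$ of mass~$1$; thus the mass functional is onto~$\R$ on~$V$ and $\dim V = \dim V_0 + 1$, while $D\mu = M(X)\,\underline{1}$ with $M(X) > 0$ (as $n > 1$) shows $\underline{1} \in \im T$. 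If instead $M(X) = \infty$, then $V \subseteq \Mzero(X)$ (a $d$-invariant measure of nonzero mass would rescale to a $d$-invariant mass-$1$ measure, forcing $M(X) < \infty$), so $\dim V = \dim V_0$; and unboundedness of $I$ on $\Mone(X)$ yields a direction $a \in \Mzero(X)$ with $I(a) = 0$ and $I(\mu_0, a) \neq 0$, whence $a$ is in the radical, $Da = \beta\,\underline{1}$ with $\beta = I(\mu_0,a) \neq 0$, so again $\underline{1} \in \im T$. In both cases $\dim(\R\,\underline{1} \cap \im T) = 1$, so $\dim\ker T = \dim V - 1$, giving $\dim\ker T = \dim V_0$ when $M(X) < \infty$ and $\dim\ker T = \dim V_0 - 1$ when $M(X) = \infty$. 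Since $r = n - \dim\ker T$ and $\dim V_0 = n - |Y|$, this yields $r = |Y|$ in the finite case and $r = |Y| + 1$ in the infinite case, as required.

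The main obstacle I anticipate is the spanning half of the second step: this is where the maximality and strict quasihypermetricity of~$Y$ are genuinely used, through the vanishing of a mass-zero $d$-invariant measure supported on~$Y$. The only other delicate point is verifying $\underline{1} \in \im T$ in the case $M(X) = \infty$, which rests on extracting a null direction of the semi-inner product with nonzero $d$-invariant value from the unboundedness of~$I$.
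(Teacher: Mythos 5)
Your proof is correct, but it runs ``dual'' to the paper's. The paper computes the \emph{image} of $T$: using Lemma~\ref{RW1} (via the proof of Theorem~\ref{RW2}) it shows $\im T = [\,i(y_1), \dots, i(y_s), \underline{1}\,]$, gets $\dim [\,i(y_1),\dots,i(y_s)\,] = s$ from the injectivity of $T_Y$ on the strictly quasihypermetric space $Y$ (Theorem~\nwAref{2.19}), imports $\underline{1} \in \im T$ wholesale from Theorem~\nwCref{alg1}, and then decides whether $\underline{1}$ lies in $[\,i(y_1),\dots,i(y_s)\,]$ by maximal-measure theory --- in the finite-$M$ direction this invokes existence and uniqueness of the maximal measure on~$Y$ together with Corollary~\ref{strict2corollary}(2) to push the invariance of $d_\mu$ from $Y$ up to~$X$. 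You instead compute the \emph{kernel side}: you exhibit $\{\mu_x - \delta_x : x \in X \setminus Y\}$ as a basis of the radical $V_0$ of the form on $\Mzero(X)$ (the same two ingredients, Lemma~\ref{RW1} and part~(5) of Lemma~\nwAref{2.7}, drive this), and then run a preimage dimension count on $T^{-1}(\R\,\underline{1})$. Your route buys self-containedness: you re-derive $\underline{1} \in \im T$ in both cases (from the attained maximal measure when $M(X) < \infty$, and from a radical direction $a$ with $I(\mu_0, a) \neq 0$ when $M(X) = \infty$), you get attainment of $M(X)$ directly from the fact that a bounded-above concave quadratic on a finite-dimensional affine space attains its supremum, and --- notably --- you avoid entirely the compact-space machinery of Theorem~\ref{strict2}/Corollary~\ref{strict2corollary}, since you never need to extend a measure's invariance from $Y$ to $X$; your invariant mass-one measure lives on $X$ from the outset. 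The paper's route buys brevity by leaning on the cited results from \cite{NW1, NW2, NW3}. Two of your steps are stated tersely and deserve an explicit line each: the attainment claim (decompose along the radical; boundedness forces the linear part to vanish there, and the form is definite on a complement), and the extraction of $a \in V_0$ with $I(\mu_0, a) \neq 0$ from $M(X) = \infty$ (the same decomposition shows the quadratic is bounded above unless the linear functional $v \mapsto I(\mu_0, v)$ is nonzero on the radical); both are routine finite-dimensional facts, so they are gaps of exposition, not of substance.
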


\begin{proof}
Write $Y = \{y_1, \dots, y_s\}$.
As in the proof of Theorem~\ref{RW2}
(and using the same notation), we have
$\im T \subseteq [ i(y_1), \dots, i(y_s), \underline{1}]$.
But $\underline{1} \in \im T$, by Theorem~\nwCref{alg1},
and of course $[ i(y_1), \dots, i(y_s) ] \subseteq \im T$,
so we have
\[
\im T = [ i(y_1), \dots, i(y_s), \underline{1} ].
\]

Since $Y$ is strictly quasihypermetric, we know by Theorem~\nwAref{2.19}
that the operator $T_Y \colon \M(Y) \to C(Y)$ defined by
$T_Y(\mu) = d_\mu$ for $\mu \in \M(Y)$ is injective,
and since the set $\{ \delta_{y_1}, \dots, \delta_{y_s} \}$
is clearly a basis for $\M(Y)$, it follows that
\[
\dim\, [ i(y_1), \dots, i(y_s) ] = s.
\]

Now we claim that $\underline{1} \in [ i(y_1), \dots, i(y_s) ]$
if and only if $M(X) < \infty$.
Suppose first that $\underline{1} \in [ i(y_1), \dots, i(y_s) ]$.
Then there are $\alpha_1, \dots, \alpha_s \in \R$ such that
$d_\mu = \underline{1}$ on~$X$, where 
$\mu = \alpha_1 \delta_{y_1} + \dots + \alpha_s \delta_{y_s} \in \M(Y)$.
It follows that $I(\mu) = \mu(Y)$.
Now if $\mu(Y) = 0$, then the fact that $Y$ is strictly quasihypermetric
would imply that $\mu = 0$, a contradiction, so we have $\mu(Y) \neq 0$.
Hence the measure $\mu' = (1/\mu(Y)) \mu \in \Mone(X)$
is $d$-invariant on~$X$ and therefore maximal on~$X$,
by Theorem~\nwBref{2.2}, and it follows that $M(X) < \infty$.

Second, suppose that $M(X) < \infty$.
Then of course $M(Y) < \infty$, and there exists a unique
maximal measure $\mu \in \Mone(Y)$ on~$Y$,
by Theorems \ref{theoremDcorollary} and~\nwBref{2.2}.
Now Theorem~\ref{2.2again}, part~(1) and Corollary~\ref{strict2corollary},
part~(2) imply that $d_\mu = M(X) \cdot \underline{1}$
on~$X$, and we must have $M(X) > 0$ since $|X| = n > 1$.
Therefore, we have $\underline{1} \in [ i(y_1), \dots, i(y_s) ]$.

From the above, we conclude that
\[
\dim\, [ i(y_1), \dots, i(y_s), \underline{1} ]
 =
\begin{cases}
  s,   & \mbox{if } M(X) < \infty, \\
  s+1, & \mbox{if } M(X) = \infty.
\end{cases}
\]
Finally, since $D$ is the matrix of the operator~$T$, we have
$r = \dim(\im T)$, and the result follows.
\end{proof}

The following result is immediate (see also Remark~\ref{Knr25} below).

\begin{corollary}
\label{RW4}
If $\Xd$ is a finite quasihypermetric space, then all maximal
strictly quasihypermetric subspaces of~$X$ have the same cardinality.
\end{corollary}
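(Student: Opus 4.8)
The plan is to read the result off directly from Theorem~\ref{RW3}, whose content is that the cardinality of a maximal strictly quasihypermetric subspace~$Y$ is pinned down entirely by data intrinsic to~$X$. Specifically, that theorem expresses $|Y|$ in terms of the rank~$r$ of the distance matrix~$D$ of~$X$ and the dichotomy $M(X) < \infty$ versus $M(X) = \infty$, and neither $r$ nor the finiteness of $M(X)$ depends on the particular choice of~$Y$. So the strategy is simply to compare the values the theorem assigns to two arbitrary such subspaces.

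First I would dispose of the degenerate case $n = 1$, which lies outside the hypothesis $n > 1$ of Theorem~\ref{RW3}. Here $X$ is a single point, so $\Mzero(X) = \{0\}$, whence $X$ is itself strictly quasihypermetric and is the unique maximal strictly quasihypermetric subspace; the assertion then holds trivially.

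For $n > 1$, let $Y_1$ and $Y_2$ be any two maximal strictly quasihypermetric subspaces of~$X$. Applying Theorem~\ref{RW3} to each in turn, with the same matrix~$D$ (hence the same rank~$r$) and the same constant $M(X)$, gives $|Y_1| = |Y_2|$: both equal $r$ when $M(X) < \infty$ and both equal $r - 1$ when $M(X) = \infty$. Since $Y_1$ and $Y_2$ were arbitrary, all maximal strictly quasihypermetric subspaces of~$X$ share a common cardinality, as claimed.

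There is no genuine obstacle in this argument; the entire substance of the corollary is already carried by Theorem~\ref{RW3}, which is exactly why it is flagged as immediate. The only point requiring a moment's attention is the $n = 1$ case noted above, which must be checked separately because it is formally excluded from Theorem~\ref{RW3}, but which is settled at once by the observation that a one-point space admits only the zero measure of mass~$0$.
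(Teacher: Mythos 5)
Your proposal is correct and takes exactly the route the paper intends: the corollary is labelled ``immediate'' precisely because Theorem~\ref{RW3} pins $|Y|$ down by the rank of $D$ and the dichotomy $M(X) < \infty$ versus $M(X) = \infty$, neither of which depends on the choice of maximal strictly quasihypermetric subspace. Your separate check of the one-point case, which falls outside the hypothesis $n > 1$ of Theorem~\ref{RW3} and is settled since $\Mzero(X) = \{0\}$ there, is a correct extra care that the paper leaves implicit.
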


\begin{theorem}
\label{danish_thm}
Let $\Xd$ be a finite metric space with $|X| > 1$ and distance matrix~$D$.
Then $X$ is strictly quasihypermetric if and only if
$X$ is quasihypermetric, $M(X) < \infty$ and $D$ is non-singular.
\end{theorem}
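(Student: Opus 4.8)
The plan is to read off both implications directly from Theorem~\ref{RW3}, whose formula relating $|Y|$ to the rank~$r$ of~$D$ already packages the interaction between strict quasihypermetricity, finiteness of~$M(X)$, and nonsingularity of~$D$. The one preliminary I need is that $X$ possesses at least one maximal strictly quasihypermetric subspace: every singleton is vacuously strictly quasihypermetric (since $\Mzero(\{x\}) = \{0\}$), and because $X$ is finite every chain of strictly quasihypermetric subspaces terminates, so a maximal one exists.

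For the forward direction I would argue as follows. Assuming $X$ is strictly quasihypermetric, it is quasihypermetric by definition, and, being finite with nothing properly containing it, $X$ is itself a maximal strictly quasihypermetric subspace of~$X$. Applying Theorem~\ref{RW3} with $Y = X$ gives $|Y| = n$, so the dichotomy there reads $n = r$ when $M(X) < \infty$ and $n = r - 1$ when $M(X) = \infty$. Since $r = \operatorname{rank} D \le n$, the second alternative would force $r = n+1 > n$, which is absurd; hence $M(X) < \infty$ and $n = r$, i.e.\ $D$ is nonsingular.

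For the reverse direction, assuming $X$ is quasihypermetric, $M(X) < \infty$, and $D$ is nonsingular (so $r = n$), I would pick any maximal strictly quasihypermetric subspace $Y \subseteq X$. Because $M(X) < \infty$, Theorem~\ref{RW3} yields $|Y| = r = n$; as $Y \subseteq X$ with $|X| = n$, this forces $Y = X$, and therefore $X = Y$ is strictly quasihypermetric.

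I expect the single substantive step to be the rank bound $r \le n$ in the forward direction, which is exactly what excludes the case $M(X) = \infty$ and thereby ties nonsingularity of~$D$ to strictness; everything else is bookkeeping through Theorem~\ref{RW3}. A self-contained route bypassing that theorem would instead work with $I(\mu) = a^\top D a$ for $\mu = \sum_i a_i \delta_{x_i}$, where quasihypermetricity makes $D$ negative semidefinite on the hyperplane $H = \{a : \sum_i a_i = 0\}$; there the crux would be to show that a nonzero null vector of $D|_H$ produces a nonzero $d$-invariant measure in $\Mzero(X)$ whose value must vanish once $M(X) < \infty$ (by Theorem~\nwAref{2.9.5}), contradicting nonsingularity — but Theorem~\ref{RW3} makes this detour unnecessary.
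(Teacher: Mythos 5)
Your proof is correct, and it takes a genuinely different route from the paper's. The paper's own proof is a two-line citation argument: for the forward direction it invokes Theorem~\nwBref{2.13} (a finite strictly quasihypermetric space has $M(X) < \infty$) and Remark~\nwCref{algremark2} (such a space with $|X| > 1$ has non-singular distance matrix), and the reverse direction is read off directly from Theorem~\nwAref{2.20}. You instead derive both directions internally from Theorem~\ref{RW3} of the present paper: in the forward direction you take $Y = X$ --- legitimate, since a strictly quasihypermetric $X$ is trivially a maximal strictly quasihypermetric subspace of itself --- and the trivial bound $r = \mathrm{rank}\, D \leq n$ rules out the alternative $|Y| = r - 1$, forcing $M(X) < \infty$ and $r = n$; in the reverse direction your existence argument for a maximal strictly quasihypermetric subspace is sound (singletons are vacuously strictly quasihypermetric since $\Mzero(\{x\}) = \{0\}$, and finiteness lets you take one of maximal cardinality), and $|Y| = r = n$ then forces $Y = X$. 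There is no circularity: Theorem~\ref{RW3} is proved earlier in the paper and its proof nowhere uses Theorem~\ref{danish_thm}. What your route buys is that the theorem becomes a formal corollary of the rank formula, keeping the argument within this paper; what the paper's route buys is brevity and no need for the existence of maximal subspaces --- though since Theorem~\ref{RW3} itself rests on the same machinery from \cite{NW1, NW2, NW3}, the self-containment is relative. Your closing sketch of the bilinear-form alternative (a null vector of $D$ restricted to the mass-zero hyperplane yields a $d$-invariant measure of value $0$ when $M(X) < \infty$, via Theorem~\nwAref{2.9.5}) is a plausible outline for the reverse direction, but as you note it is not needed.
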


\begin{proof}
For the forward implication, we observe that if
$X$~is strictly quasihypermetric, then $M(X) < \infty$
by Theorem~\nwBref{2.13} and (since $|X| > 1$)
$D$ is non-singular by Remark~\nwCref{algremark2}.
The reverse implication is immediate from Theorem~\nwAref{2.20}.
\end{proof}

We recall the following definition, due to Kelly~\cite{Kel2}.
Let $\Xd$ be a metric space. If for all $n \in \N$
and for all $a_1, \ldots, a_n, b_1, \ldots, b_{n+1} \in X$ we have
\[
\sum_{i=1}^n \sum_{j=1}^n d(a_i, a_j)
    + \sum_{i=1}^{n + 1} \sum_{j=1}^{n+1} d(b_i, b_j)
\leq
2 \sum_{i=1}^n \sum_{j=1}^{n+1} d(a_i, b_j),
\]
then $\Xd$ is said to be a \textit{hypermetric space}.
Kelly shows that the hypermetric property implies the
quasihypermetric property.
Using Lemma~1.2 of~\cite{Ass1} and Theorem~\nwCref{sphere400},
we see that finite hypermetric spaces have $M$ finite,
and we therefore have the following result from~\cite{HLMT}.

\begin{corollary}[Theorem~5.2 of \cite{HLMT}]
\label{danish_cor}
If the finite metric space $\Xd$ is hypermetric and has non-singular
distance matrix, then $\Xd$ is strictly quasihypermetric.
\end{corollary}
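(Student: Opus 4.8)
The plan is to obtain the conclusion as a direct application of Theorem~\ref{danish_thm}, whose hypotheses are exactly the three conditions it takes to certify strict quasihypermetricity of a finite space: that $X$ is quasihypermetric, that $M(X) < \infty$, and that the distance matrix~$D$ is non-singular. The strategy is therefore to check that each of these three conditions holds under the present assumptions, rather than to argue about measures directly.

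First I would dispose of a degenerate case. Theorem~\ref{danish_thm} requires $|X| > 1$, and this is in fact automatic here: a one-point space has the $1 \times 1$ zero matrix as its distance matrix, which is singular, so the standing hypothesis that $D$ is non-singular already forces $|X| > 1$. This guarantees that Theorem~\ref{danish_thm} is applicable.

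Next I would verify the three hypotheses in turn. The non-singularity of~$D$ is given outright. That $X$ is quasihypermetric follows from Kelly's result, recalled just above, that the hypermetric property implies the quasihypermetric property. That $M(X) < \infty$ follows from the observation recorded immediately before the statement, namely that a finite hypermetric space has $M$ finite, obtained by combining Lemma~1.2 of~\cite{Ass1} with Theorem~\nwCref{sphere400}. With all three conditions in hand, Theorem~\ref{danish_thm} yields at once that $X$ is strictly quasihypermetric.

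I expect essentially no obstacle in this argument: the substantive work has already been isolated in Theorem~\ref{danish_thm} and in the two cited facts about hypermetric spaces, so the corollary reduces to assembling these ingredients. The only point calling for any care is the implicit exclusion of the one-point space, and as noted this is handled for free by the non-singularity hypothesis.
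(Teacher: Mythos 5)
Your proof is correct and takes essentially the same route as the paper, which likewise obtains the corollary by feeding Kelly's implication (hypermetric implies quasihypermetric), the finiteness of $M$ for finite hypermetric spaces, and the given non-singularity of~$D$ into Theorem~\ref{danish_thm}. Your explicit remark that non-singularity of~$D$ forces $|X| > 1$ (since a one-point space has the singular $1 \times 1$ zero matrix) is a small point the paper leaves implicit, and you handle it correctly.
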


\begin{remark}
\label{danish_rem}
By Theorem~\nwAref{2.6},
if $\Xd$ is a compact metric space with $M(X) < \infty$,
then $X$ is quasihypermetric, so that the quasihypermetric
condition could be omitted from the statement of Theorem~\ref{danish_thm}.
We observe, however, that neither of the other conditions
can be omitted.

First, Theorem~\nwBref{2.9}, with Remark~\nwCref{algremark3},
gives a $5$-point quasihypermetric, non-strictly quasihypermetric
space with $M$ infinite and non-singular distance matrix.
Second, a $4$-point space formed by choosing two diametrically
opposite pairs of points from a circle equipped with the
arc-length metric is quasihypermetric and has $M$ finite
but is not strictly quasihypermetric
(see Example~\nwBref{4ptexample}).
\end{remark}

\section{The Value of $M$ in Finite Quasihypermetric Spaces}
\label{Knr}

In this section, we discuss the numerical value of~$M$ in a finite
quasihypermetric space, considered as a function of the cardinality
of the space and the cardinality of its maximal
strictly quasihypermetric subspaces (see Corollary~\ref{RW4}).

We will say that a finite metric space $\Xd$ is \textit{of type $(n, r)$},
for positive integers $n$ and~$r$, if $|X| = n$, $X$ has metric diameter
$D(X) = 1$, $X$~is quasihypermetric, $M(X) < \infty$ and the
maximal strictly quasihypermetric subspaces of~$X$ have cardinality~$r$.
For pairs $(n, r)$ for which the collection of spaces of type $(n, r)$
is non-empty, we define
\[
K(n, r) = \sup \{ M(X) : X \mbox{ has type } (n, r) \}.
\]
The main results of this section  give information about $K(n, r)$.

First, we briefly recall some definitions and results from~\cite{NW3}.
We noted in section~\nwCref{embeddingsI}
(and earlier in section~\nwAref{sect:qhm})
the result of Schoenberg~\cite{Sch3} that a separable metric space
$\Xd$ is quasihypermetric if and only if the metric space
$(X, d^\frac{1}{2})$ can be embedded isometrically
in the Hilbert space~$\ell^2$.
In particular, if $X$ is a finite space,
then $\Xd$ is quasihypermetric if and only if $(X, d^\frac{1}{2})$
can be embedded isometrically in a euclidean space
of suitable dimension. Following~\cite{NW3}, we refer to an embedding
of $(X, d^\frac{1}{2})$ in a euclidean space or in Hilbert space
as a \textit{Schoenberg-embedding} or, for short,
an \textit{S-embedding} of~$X$.

Consider a set $\{p_1, \dots, p_n\}$ of points in a euclidean space
and suppose that these points are in the obvious way
the points of an S-embedding of a (quasihypermetric) metric space
$(X = \{x_1, \dots, x_n\}, d)$,
so that $d(x_i, x_j) = \|p_i - p_j\|^2$ for all $i$ and~$j$.
Fix any three distinct points from $\{p_1, \dots, p_n\}$.
Then by applying the cosine rule in the triangle defined by
the three points and applying the triangle inequality to the three
corresponding points of~$X$, we see that the three angles formed
by the three points are less than or equal to $\pi/2$.
Conversely, it is straightforward to check that if $p_1, \dots, p_n$
are (distinct) points in a euclidean space which satisfy
the angle condition just noted, then the function~$d$
defined by $d(x_i, x_j) = \|p_i - p_j\|^2$ for all $i$ and~$j$
is a metric on $X = \{x_1, \dots, x_n\}$ (necessarily satisfying
the quasihypermetric property).

We will refer to a (finite) configuration of points
in a euclidean space satisfying the above angle condition
as a \textit{non-obtuse} configuration,
and to one in which all angles are strictly less than $\pi/2$
as an \textit{acute} configuration.

We require below one further aspect of the correspondence just
described between finite quasihypermetric spaces and finite
non-obtuse confugurations of points in euclidean spaces.
By part~(3) of Theorem~\nwCref{sphere400}, the metric space
in such a correspondence is strictly quasihypermetric if and
only if the configuration is affinely independent.

\begin{theorem}
\label{D&G}
Let $\Xd$ with $|X| = n$ be a quasihypermetric space,
and let $Y$ with $|Y| = r$ be a maximal strictly quasihypermetric
subspace of~$X$. Then $n \leq 2^{r-1}$.
\end{theorem}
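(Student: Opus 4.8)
The plan is to translate the problem into euclidean geometry via an S-embedding and then invoke the classical Danzer--Gr\"unbaum bound on non-obtuse point configurations. Since $X$ is finite and quasihypermetric, I would fix an S-embedding, realizing the points of $X = \{x_1, \dots, x_n\}$ as a non-obtuse configuration $p_1, \dots, p_n$ in a euclidean space, with $d(x_i, x_j) = \|p_i - p_j\|^2$ for all $i$ and~$j$.

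The next step is to pin down the affine geometry forced by the hypotheses on~$Y$. Relabel so that $Y = \{x_1, \dots, x_r\}$. Since $Y$ is strictly quasihypermetric, part~(3) of Theorem~\nwCref{sphere400} shows that the subconfiguration $p_1, \dots, p_r$ is affinely independent, so its affine hull~$A$ is an $(r-1)$-dimensional affine subspace. Now consider any $x \in X \setminus Y$. Because $X$ is quasihypermetric, so is the subspace $Y \cup \{x\}$; but by the maximality of~$Y$ this subspace cannot be strictly quasihypermetric, so again by part~(3) of Theorem~\nwCref{sphere400} the configuration corresponding to $Y \cup \{x\}$ is affinely \emph{dependent}. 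As $p_1, \dots, p_r$ are affinely independent, the extra point must lie in their affine hull, and hence every $p_i$ lies in~$A$.

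Thus the entire non-obtuse configuration $p_1, \dots, p_n$ lives in the $(r-1)$-dimensional affine space~$A$, and (since $p_1, \dots, p_r$ span $A$ affinely) its convex hull is full-dimensional in~$A$. The Danzer--Gr\"unbaum theorem asserts that a configuration of points in $\R^{r-1}$ all of whose angles are at most $\pi/2$ has at most $2^{r-1}$ points; applying this inside~$A$ gives $n \leq 2^{r-1}$, as required.

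The main content is the Danzer--Gr\"unbaum bound itself, and if it is to be proved rather than cited the key step is a volume-packing argument: writing $K$ for the convex hull of the configuration, for each $p_i$ the shrunken copy $K_i = \tfrac{1}{2}(K + p_i)$ lies inside~$K$, while the non-obtuse condition $\langle p_k - p_i, p_j - p_i \rangle \geq 0$ separates $K_i$ from $K_j$ by the perpendicular bisector hyperplane of the segment $p_i p_j$. The $n$ copies therefore have pairwise disjoint interiors inside~$K$, each of volume $2^{-(r-1)} \mathrm{vol}(K)$, and comparing volumes yields $n \leq 2^{r-1}$. The one point requiring care is that this volume comparison is informative only when $K$ is full-dimensional in~$A$, which is precisely why establishing that $p_1, \dots, p_r$ affinely span~$A$ (via the strict quasihypermetricity of~$Y$) is essential.
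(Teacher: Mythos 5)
Your proposal is correct and follows essentially the same route as the paper's proof: an S-embedding, affine independence of the image of~$Y$ via part~(3) of Theorem~3.4 of~\cite{NW3}, affine dependence of $Y \cup \{x\}$ for each $x \in X \setminus Y$ by maximality, hence the whole configuration lies in an $(r-1)$-dimensional affine hull, and then the Danzer--Gr\"unbaum bound. The only difference is that you additionally sketch the volume-packing proof of the Danzer--Gr\"unbaum theorem itself, which the paper simply cites; that sketch is sound but not required.
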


\begin{proof}
Consider an S-embedding of~$X$ into a euclidean space of any dimension,
writing $X'$ for the image of~$X$ and $Y'$ for the image of~$Y$.
By Theorem~\nwCref{sphere400}, as just noted,
$Y'$ is an affinely independent set,
and for each $x' \in X' \setminus Y'$,
the set $Y' \cup \{x'\}$ is affinely dependent. It follows
that $X' \setminus Y'$ lies in the affine hull of~$Y'$.
But the affine hull of~$Y'$ is an $(r-1)$-dimensional affine subspace
of the original euclidean space, so we may take the original space
to be $\R^{r-1}$.
 
As noted above, any S-embedding of~$X$ in a euclidean space
forms a non-obtuse configuration.
But a result of Danzer and Gr\"{un}baum~\cite{D&G} shows that
at most $2^{r-1}$ points can be placed in $\R^{r-1}$ so as to form
such a configuration, and we conclude that $n \leq 2^{r-1}$.
\end{proof}

Thus, for a given~$r$, the range of possible values of~$n$
is given by $r \leq n \leq 2^{r-1}$. We claim that all
values of~$n$ in this range can be realized.

Given $a_1, \dots, a_{r-1} > 0$, consider in $\R^{r-1}$
the set $S$ consisting of the $2^{r-1}$~corners
\[
(\pm a_1, \dots, \pm a_{r-1})
\]
of a rectangular parallelepiped, and fix an affinely independent
subset~$A$ of~$S$ with $r$ elements, such as $(a_1, \dots, a_{r-1})$
together with the $r-1$ points
\[
(-a_1, a_2, \dots, a_{r-1}),
(a_1, -a_2, \dots, a_{r-1}), \dots,
(a_1, a_2, \dots, -a_{r-1}).
\]
Clearly, $S$ is a non-obtuse configuration,
and so by our comments above is the S-embedding
of a quasihypermetric metric space~$X$ with $2^{r-1}$ elements.
Further, by Theorem~\nwCref{sphere400},
$A$ is the S-embedding of a maximal strictly quasihypermetric
subspace~$Y$ of~$X$. Now, by removing the elements of $X \setminus Y$
one by one from~$X$, we obtain for each~$n$ in the range
$r \leq n \leq 2^{r-1}$ a quasihypermetric space with $n$ elements
and a maximal strictly quasihypermetric subspace with $r$ elements,
proving our claim. Moreover, $S$ lies on the sphere with centre~$0$
and radius $(a_1^2 + \dots + a_{r-1}^2)^{1/2}$,
and so by Theorem~\nwCref{sphere400}, $X$~and all the subspaces
just constructed have $M$ finite.
We have the following immediately.

\begin{corollary}
For all $n > 1$, the range of~$r$ for which $K(n, r)$ is defined
is given by $\lceil \log_2 n \rceil + 1 \leq r \leq n$.
\end{corollary}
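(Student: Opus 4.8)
The plan is to recognize that $K(n, r)$ is defined exactly when the collection of spaces of type $(n, r)$ is non-empty, so the task reduces to determining, for a fixed $n > 1$, which values of $r$ are realized. The two ingredients needed are already in hand: the trivial containment bound together with Theorem~\ref{D&G}, and the explicit realization construction carried out immediately before the corollary. The whole argument is a matter of assembling these and doing a little bookkeeping with the logarithm.

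For the \emph{necessity} of the bounds, I would argue as follows. Suppose $X$ has type $(n, r)$, so that $X$ possesses a maximal strictly quasihypermetric subspace $Y$ with $|Y| = r$. Since $Y \subseteq X$ we have at once $r \leq n$, which is the upper bound on $r$. Theorem~\ref{D&G} gives $n \leq 2^{r-1}$; taking base-$2$ logarithms, this is equivalent to $r - 1 \geq \log_2 n$. Since $r - 1$ is an integer, the inequality $r - 1 \geq \log_2 n$ is equivalent to $r - 1 \geq \lceil \log_2 n \rceil$, that is, to $r \geq \lceil \log_2 n \rceil + 1$, which is the lower bound on $r$.

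For \emph{sufficiency}, I would appeal directly to the construction preceding the corollary, which produces, for any given $r$ and any $n$ with $r \leq n \leq 2^{r-1}$, a finite quasihypermetric space on $n$ points, with $M$ finite, whose maximal strictly quasihypermetric subspaces have cardinality~$r$. Rescaling the metric by the reciprocal of its diameter changes neither the (strict) quasihypermetric property of any subspace, nor the finiteness of $M$, nor the relevant cardinalities, and it normalizes the diameter to~$1$; the resulting space has type $(n, r)$. Hence every $r$ in the range $\lceil \log_2 n \rceil + 1 \leq r \leq n$ is realized, while by the previous paragraph no other value of $r$ occurs, and this identifies the range exactly.

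I do not expect a genuine obstacle, since the substantive content lives entirely in Theorem~\ref{D&G} and in the realization construction; consistent with the remark that the result is ``immediate'', the corollary is essentially a reformulation of the already established constraint $r \leq n \leq 2^{r-1}$ with $n$ held fixed rather than $r$. The only points requiring slight care are the ceiling-function manipulation in passing from $n \leq 2^{r-1}$ to $r \geq \lceil \log_2 n \rceil + 1$, and checking that the stated interval is non-empty for every $n > 1$; the latter follows from the elementary inequality $\lceil \log_2 n \rceil \leq n - 1$, valid for all integers $n \geq 2$.
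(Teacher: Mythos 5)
Your proposal is correct and follows essentially the same route as the paper: the corollary is stated there as immediate from Theorem~\ref{D&G} (giving $n \leq 2^{r-1}$, hence $r \geq \lceil \log_2 n \rceil + 1$ after the same ceiling manipulation) together with the parallelepiped-corner construction realizing every pair with $r \leq n \leq 2^{r-1}$ with $M$ finite. Your explicit checks --- that rescaling to diameter~$1$ preserves type, and that the interval is non-empty for $n \geq 2$ --- are details the paper leaves tacit, and they are handled correctly.
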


Note that the value $n = 1$ does not arise here, since attention
is restricted in the definition of~$K$ to spaces of diameter~$1$.

The following simple observation will be useful.

\begin{lemma}
\label{simple}
Suppose that $\Xd$ is a finite metric space which has an S-embedding
into a sphere in some euclidean space and that the points of the
embedded set include two diametrically opposite points of the sphere.
Then $M(X)/D(X) = 1/2$.
\end{lemma}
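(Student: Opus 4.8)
The plan is to pass to the S-embedding and evaluate $I$ directly. Write $X = \{x_1, \dots, x_n\}$ and let $p_1, \dots, p_n$ be the points of an S-embedding that lie on a sphere with centre~$c$ and radius $\rho > 0$, so that $d(x_i, x_j) = \|p_i - p_j\|^2$ for all $i, j$ (with $\|\cdot\|$ the euclidean norm) and $\|p_i - c\| = \rho$ for every~$i$. The hypothesis that two of the embedded points, say $p_1$ and~$p_2$, are diametrically opposite on the sphere translates precisely into the two facts $c = \tfrac{1}{2}(p_1 + p_2)$ and $\|p_1 - p_2\| = 2\rho$, both of which I will use.

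The key step is an explicit formula for $I(\mu)$. Setting $q_i = p_i - c$, so that $\|q_i\| = \rho$ for each~$i$ and $q_1 + q_2 = 0$, and using $\|p_i - p_j\|^2 = \|q_i - q_j\|^2 = 2\rho^2 - 2\langle q_i, q_j \rangle$, I would compute, for any $\mu = \sum_i \mu_i \delta_{x_i} \in \Mone(X)$ (so that $\sum_i \mu_i = 1$),
\[
I(\mu)
 = \sum_{i,j} \mu_i \mu_j \|p_i - p_j\|^2
 = 2\rho^2 \Bigl( \sum_i \mu_i \Bigr)^2 - 2 \Bigl\| \sum_i \mu_i q_i \Bigr\|^2
 = 2\rho^2 - 2 \Bigl\| \sum_i \mu_i q_i \Bigr\|^2 .
\]
Since the last term is non-negative, this gives at once the upper bound $I(\mu) \leq 2\rho^2$ for every $\mu \in \Mone(X)$, and hence $M(X) \leq 2\rho^2$.

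For the matching lower bound I would exhibit a single measure attaining the bound: the choice $\mu = \tfrac{1}{2}\delta_{x_1} + \tfrac{1}{2}\delta_{x_2} \in \Mone(X)$ gives $\sum_i \mu_i q_i = \tfrac{1}{2}(q_1 + q_2) = 0$, so that $I(\mu) = 2\rho^2$; therefore $M(X) = 2\rho^2$. For the denominator, all $p_i$ lie on the sphere, so $\|p_i - p_j\| \leq 2\rho$ and hence $d(x_i, x_j) \leq 4\rho^2$ for all $i, j$, while the antipodal pair gives $d(x_1, x_2) = \|p_1 - p_2\|^2 = 4\rho^2$; thus $D(X) = 4\rho^2$. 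Combining the two values yields $M(X)/D(X) = 2\rho^2 / 4\rho^2 = 1/2$.

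I do not expect a serious obstacle here, since the whole argument reduces to a direct evaluation of~$I$ on the embedding. The only points needing care are that $M(X)$ is the supremum of $I$ over \emph{all} signed measures of mass one, so that the clean identity $I(\mu) = 2\rho^2 - 2\|\sum_i \mu_i q_i\|^2$ applies verbatim and makes the upper bound transparent, and that the diametrically-opposite hypothesis is used twice: once to realise the centre~$c$ as a mass-one combination of embedded points (forcing $\|\sum_i \mu_i q_i\|^2$ to zero and so attaining $M(X) = 2\rho^2$), and once to attain the diameter $D(X) = 4\rho^2$.
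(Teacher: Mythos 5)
Your proof is correct, but it takes a different route from the paper. The paper's proof is a two-line reduction: it first passes to a euclidean space of minimal dimension (observing that the antipodal pair forces any lower-dimensional affine flat containing the embedded set to cut the sphere in a sphere of the \emph{same} radius), and then simply cites Theorem 3.2 of [NW3], which asserts $M(X) = 2r^2$ for a space S-embedded on a sphere of radius $r$ in this situation; the diameter computation $D(X) = (2r)^2$ is the same as yours. You instead prove the needed sphere fact from scratch via the quadratic-form identity $I(\mu) = 2\rho^2 - 2\bigl\| \sum_i \mu_i q_i \bigr\|^2$ for $\mu \in \Mone(X)$, which is valid for all signed measures of mass one and immediately gives $M(X) \leq 2\rho^2$ for \emph{any} sphere S-embedding (no minimality or antipodality needed for the upper bound), and you then use the antipodal pair to exhibit the attaining measure $\frac{1}{2}(\delta_{x_1} + \delta_{x_2})$, whose centroid is the centre. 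This buys you a fully self-contained argument for the finite case that bypasses both the dimension-reduction step and the external theorem, and it makes transparent that the antipodal hypothesis is used exactly twice, once for attainment of $M$ and once for attainment of $D$; what it costs, relative to the paper, is that you are essentially reproving the relevant special case of the cited sphere theorem, which in [NW3] is established in the general compact setting (where signed measures with centroid at the centre must be produced by an affine-hull argument rather than an explicit pair). One small point of rigour that your write-up handles implicitly and could state: since the S-embedding is isometric for $d^{1/2}$, the two antipodal image points are distinct, so $\rho > 0$ and $D(X) = 4\rho^2 > 0$, making the ratio well defined.
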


\begin{proof}
We may assume that the S-embedding is into a euclidean space
of minimal dimension, since otherwise we may pass to an affine
flat of the lowest dimension which contains the S-embedded set,
and the existence of the diametrically opposite pair
ensures that the flat intersects the original sphere
in a sphere of lower dimension with the same radius.
Then Theorem~\nwCref{sphere200} implies that $M(X) = 2 r^2$,
where $r$ is the radius of the sphere. But from the definition
of an S-embedding, the metric diameter of~$X$ is
$D(X) = (2r)^2 = 4 r^2$, so the result follows.
\end{proof}

\begin{theorem}
\label{Kbox}
$K(2^{r-1}, r) = 1/2$ for all $r \geq 2$.
\end{theorem}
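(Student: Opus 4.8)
The plan is to prove the sharper statement that \emph{every} space of type $(2^{r-1}, r)$ satisfies $M(X) = 1/2$ exactly; since $K(2^{r-1}, r)$ is then the supremum of a constant, the equality $K(2^{r-1}, r) = 1/2$ follows at once, provided the class of such spaces is non-empty. Non-emptiness is already available from the box construction preceding Corollary~\ref{RW4}: rescaling the parameters $a_1, \dots, a_{r-1}$ so that the metric diameter equals~$1$ yields a quasihypermetric space with $2^{r-1}$ points, finite $M$, and (by Corollary~\ref{RW4}, together with the affinely independent subset~$A$ of size~$r$) maximal strictly quasihypermetric subspaces of cardinality exactly~$r$. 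So the whole content lies in computing $M(X)$ for an arbitrary space $X$ of this type.

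To do this, I would fix such an $X$ with maximal strictly quasihypermetric subspace~$Y$, where $|Y| = r$, and run the opening of the proof of Theorem~\ref{D&G}. An S-embedding of~$X$ produces a non-obtuse configuration $X'$ in which the image $Y'$ is affinely independent and every point of $X' \setminus Y'$ lies in the affine hull of~$Y'$ (using Theorem~\nwCref{sphere400}); hence $X'$ sits inside the $(r-1)$-dimensional affine hull of~$Y'$, and we may view it as a non-obtuse configuration of exactly $2^{r-1}$ points in~$\R^{r-1}$. The key move --- and the one ingredient beyond what Theorem~\ref{D&G} used --- is to invoke the \emph{equality case} of the theorem of Danzer and Gr\"unbaum~\cite{D&G}: whereas Theorem~\ref{D&G} needed only that at most $2^{r-1}$ points can form a non-obtuse configuration in $\R^{r-1}$, the same result asserts that this maximum is attained \emph{only} by the vertices of a rectangular parallelepiped. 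Thus, in suitable coordinates, $X'$ must be precisely the set of $2^{r-1}$ corners $(\pm a_1, \dots, \pm a_{r-1})$ of such a box.

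From here the conclusion is immediate. The corners of a rectangular parallelepiped all lie on its circumscribed sphere, and any pair of opposite corners, such as the images of $(a_1, \dots, a_{r-1})$ and $(-a_1, \dots, -a_{r-1})$, are diametrically opposite points of that sphere. Hence $X$ satisfies the hypotheses of Lemma~\ref{simple}, which gives $M(X)/D(X) = 1/2$; since $D(X) = 1$ for a space of type $(2^{r-1}, r)$, we obtain $M(X) = 1/2$. As $X$ was arbitrary, $K(2^{r-1}, r) = 1/2$. The main obstacle I anticipate is entirely contained in the second step: one must be sure that the extremal configurations in the Danzer--Gr\"unbaum bound are genuinely rectangular boxes (so that the antipodal structure needed by Lemma~\ref{simple} is present), rather than merely arbitrary $2^{r-1}$-point non-obtuse configurations; granting that characterization, the remainder is a direct application of results already in place.
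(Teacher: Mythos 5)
Your proposal is correct and follows essentially the same route as the paper: the paper's proof likewise invokes the equality case of the Danzer--Gr\"{u}nbaum theorem (that every non-obtuse configuration of $2^{r-1}$ points in $\R^{r-1}$ is the vertex set of a rectangular parallelepiped) and then applies Lemma~\ref{simple} to the circumsphere with its diametrically opposite corner pairs. Your extra steps --- the reduction of the S-embedding into $\R^{r-1}$ via the argument of Theorem~\ref{D&G}, and the non-emptiness of the class via the box construction --- only make explicit what the paper's terser proof leaves implicit.
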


\begin{proof}
Danzer and Gr\"{un}baum~\cite{D&G} show not only that the largest
non-obtuse configuration in $\R^{r-1}$ has $2^{r-1}$ points,
but that every such configuration consists of the vertices
of a rectangular parallelepiped. Application of the lemma
therefore gives the result immediately.
\end{proof}

\begin{lemma}
\label{Knr5}
Let $S$ be a non-degenerate $(k-1)$-sphere in~$\R^k$
and let $P$ be a point of~$S$.
If $A$ is an affine flat of dimension less than or equal to~$k-1$
passing through~$P$, then every open ball centred at~$P$
contains a point of $S \setminus A$.
\end{lemma}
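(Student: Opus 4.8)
The plan is to reformulate the conclusion as the assertion that $P$ lies in the closure of $S \setminus A$, and then to exhibit explicitly a sequence of points of $S \setminus A$ tending to~$P$. I would first normalise by translating so that the centre of~$S$ is the origin; then $S = \{x \in \R^k : \|x\| = r\}$ for some $r > 0$, and $\|P\| = r$. Writing $A = P + V$, where $V$ is the $m$-dimensional linear direction space of~$A$ with $m \leq k-1$, the points I seek will be produced by moving away from~$P$ along a great circle of~$S$ in a carefully chosen tangent direction.

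Concretely, for any unit vector $v$ with $\langle v, P \rangle = 0$ I would set $\gamma(t) = \cos t \, P + r \sin t \, v$; a direct computation using $\|P\| = r$, $\|v\| = 1$ and $\langle v, P\rangle = 0$ shows that $\gamma(t) \in S$ for all~$t$, that $\gamma(0) = P$, and that $\gamma(t) \to P$ as $t \to 0$. The remaining task is to choose~$v$ so that $\gamma(t) \notin A$ for all small $t \neq 0$. To test membership in~$A$ I would project onto $V^\perp$: letting $\Pi$ denote orthogonal projection onto $V^\perp$ and setting $p = \Pi P$ and $w = \Pi v$, one has $\gamma(t) \in A$ if and only if $\gamma(t) - P \in V$, that is, if and only if $(\cos t - 1)\,p + r \sin t\, w = 0$ in $V^\perp$.

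The crux is therefore to find a tangent direction $v \in P^\perp$ with $w = \Pi v \neq 0$, equivalently $v \notin V$. Such a $v$ exists precisely when $P^\perp \not\subseteq V$; and since $\dim P^\perp = k-1 \geq \dim V$, the only way this can fail is $V = P^\perp$, which means that $A$ is exactly the tangent hyperplane to~$S$ at~$P$. That exceptional case is handled separately and trivially: the tangent hyperplane meets~$S$ only at~$P$ (by the equality case in the Cauchy--Schwarz inequality), so $S \setminus A = S \setminus \{P\}$, which plainly accumulates at~$P$. In the main case $w \neq 0$, and a short estimate based on $r \sin t \sim r t$ dominating $\cos t - 1 \sim -t^2/2$ shows that $(\cos t - 1)\,p + r \sin t\, w$ is nonzero for all sufficiently small $t \neq 0$; hence $\gamma(t) \in S \setminus A$ for such~$t$, and letting $t \to 0$ finishes the proof. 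I expect the only real subtlety to be this final case analysis, and in particular verifying the small-$t$ nonvanishing when $w$ happens to be parallel to~$p$, where (writing $w = \lambda p$) one checks that the scalar coefficient $(\cos t - 1) + r\lambda \sin t$ still has $t = 0$ as its only small root, since its derivative at $0$ equals $r\lambda \neq 0$.
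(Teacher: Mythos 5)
Your proof is correct, but it takes a genuinely different route from the paper's. The paper argues synthetically and without coordinates: given the open ball, it chooses an auxiliary $(k-1)$-sphere $S_1$ centred at~$P$, lying inside the ball and of radius smaller than the diameter of~$S$, so that $S_2 = S \cap S_1$ is a $(k-2)$-sphere of points of~$S$ near~$P$; since $S_2$ contains $k$ affinely independent points whose affine hull does not contain~$P$, the flat~$A$ (of dimension at most $k-1$ but passing through~$P$) would contain $k+1$ affinely independent points if it contained~$S_2$, so some point of $S_2 \setminus A$ exists and is the required witness. You instead centre $S$ at the origin and exhibit an explicit great-circle curve $\gamma(t) = \cos t \, P + r \sin t \, v$ through~$P$, choosing the tangent direction $v \in P^\perp \setminus V$ (where $A = P + V$); such a~$v$ exists unless $V = P^\perp$, and you correctly dispatch that tangent-hyperplane case via the equality case of Cauchy--Schwarz. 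Your closing worry about $w = \Pi v$ being parallel to $p = \Pi P$ is in fact unnecessary: the triangle-inequality estimate $\|(\cos t - 1)\,p + r \sin t \, w\| \geq r|\sin t|\,\|w\| - (1 - \cos t)\,\|p\|$, in which the first term is of order $|t|$ and the second of order $t^2$, handles all positions of $p$ and~$w$ at once (including $p = 0$), so your derivative computation $f'(0) = r\lambda \neq 0$, while correct, is subsumed. As for what each approach buys: the paper's proof is shorter and free of case distinctions, at the cost of quietly using that $S \cap S_1$ is a genuine $(k-2)$-sphere and that its affine hull misses~$P$; yours is computational and slightly longer, but it produces an entire arc of points of $S \setminus A$ converging to~$P$, i.e., it shows $P$ is not isolated in $S \setminus A$, which is marginally stronger than one witness per ball. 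One shared caveat: both arguments tacitly assume $k \geq 2$ (your accumulation claim for $S \setminus \{P\}$ and the paper's $(k-2)$-sphere both fail when $k = 1$, where the lemma itself is false); this is harmless, since the lemma is applied with $k = d+1 \geq 4$.
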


\begin{proof}
Given an open ball at~$P$, choose a $(k-1)$-sphere~$S_1$
centred at~$P$ which lies within the ball and has radius
less than the diameter of~$S$.
Then $S_2 = S \cap S_1$ is a $(k-2)$-sphere, and therefore
contains a maximal affinely independent subset with $k$ points
which spans an affine flat~$B$ of dimension $k-1$.
Clearly, $P$ does not lie in~$B$, so $A$ cannot contain~$S_2$,
since if it did it would contain an affinely independent
set of $k+1$ points, contradicting our assumption.
Any point chosen from $S_2 \setminus A$ is therefore as required.
\end{proof}

\begin{theorem}
\label{Knr10}
$K(n, r) = \infty$ for $n \geq 5$ and
$\ds\Bigl\lceil \frac{n+5}{2} \Bigr\rceil \leq r \leq n$.
\end{theorem}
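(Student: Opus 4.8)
The plan is to prove $K(n,r) = \infty$ by exhibiting, for every $L > 0$, a space of type $(n,r)$ with $M(X) > L$. I would argue entirely on the S-embedding side, using the dictionary set up above: a finite configuration is quasihypermetric exactly when it is non-obtuse; it is strictly quasihypermetric exactly when it is affinely independent, and lying on a sphere ensures $M(X) < \infty$ (Theorem~\nwCref{sphere400}). Since $M(X)/D(X)$ is invariant under scaling the configuration, I fix the unit sphere centred at the origin in $\R^{r-1}$ and seek configurations of $n$ points there. A direct computation (or Theorem~\nwCref{sphere200}) gives $I(\mu) = 2 - 2\|\sum_i \mu_i p_i\|^2$ for $\sum_i \mu_i = 1$, so if the $n$ points affinely span $\R^{r-1}$ then the origin lies in their affine hull and $M(X) = 2$, while $D(X) = \max_{i,j}\|p_i - p_j\|^2$ can be made arbitrarily small by clustering the points in a small spherical cap. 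Such a spanning configuration automatically has type $(n,r)$: a maximal affinely independent subset $Y$ has $r$ points, is strictly quasihypermetric, and is maximal because $X \setminus Y$ lies in its $(r-1)$-dimensional affine hull, so by Corollary~\ref{RW4} every maximal strictly quasihypermetric subspace has $r$ points. It therefore suffices to place, in an arbitrarily small cap, $n$ non-obtuse points on the unit sphere that affinely span $\R^{r-1}$.

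To do this I would exploit the fact that near a pole the sphere is the graph of a paraboloid over its $(r-2)$-dimensional tangent hyperplane: a small cluster is, to leading order, a scaled copy of its orthogonal shadow in that hyperplane, and the angles of the configuration converge to those of the shadow as the cluster shrinks. Accordingly I take the shadow to be a strictly acute configuration of $n$ points affinely spanning $\R^{r-2}$, scale it by a small factor $\epsilon$, and lift it onto the sphere, using Lemma~\ref{Knr5} to choose the lifted points on the sphere off the successively constructed affine flats, so that the lift still affinely spans $\R^{r-1}$ and still has vanishing diameter. For $\epsilon$ small enough the strict acuteness of the shadow survives the curvature perturbation, so the lifted points form a genuine non-obtuse configuration. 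The range is then dictated by the existence of the acute shadow: by the Danzer--Gr\"unbaum bound on acute sets~\cite{D&G}, an acute configuration in $\R^{r-2}$ has at most $2(r-2)-1 = 2r-5$ points, while an affinely spanning one needs at least $r-1$ points, so the construction runs exactly when $r-1 \le n \le 2r-5$. The left-hand inequality follows from $r \le n$, and $n \le 2r-5$ is precisely $r \ge \lceil (n+5)/2 \rceil$; the hypothesis $n \ge 5$ forces $r \ge 5$, so the shadow lives in $\R^{r-2}$ with $r - 2 \ge 3$, where acute sets of the required sizes are available.

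Letting $\epsilon \to 0$ produces configurations with $M = 2$ fixed and $D \to 0$; rescaling each to diameter $1$ gives spaces of type $(n,r)$ with $M = 2/D \to \infty$, whence $K(n,r) = \infty$. I expect the lifting step to be the main obstacle: one must verify quantitatively that a strictly acute shadow, once scaled down and placed on the sphere, stays non-obtuse and retains full affine span, controlling the $O(\epsilon^2)$ curvature terms in both the angle estimates and the affine independence of the chosen subset $Y$, and confirm that the diameter really does tend to $0$ while $M$ stays equal to $2$. Once the lift is under control, matching the acute-set bound to the stated range of $r$ is routine.
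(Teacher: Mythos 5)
Your proposal is correct and is essentially the paper's own proof viewed in rescaled coordinates: the paper fixes the Danzer--Gr\"unbaum acute configuration at diameter about $1$ and lets the radius $\rho$ of the circumscribing sphere tend to infinity (so $M = 2\rho^2 \to \infty$ by Theorem 3.2 of~\cite{NW3}), whereas you fix the unit sphere and shrink the cluster so that $M = 2$ while $D \to 0$, but since $M/D$ is similarity-invariant these are the same construction, built from the same three ingredients --- the S-embedding dictionary (non-obtuse $=$ quasihypermetric, affinely independent $=$ strictly quasihypermetric, spherical $\Rightarrow$ $M$ finite), the Danzer--Gr\"unbaum acute set of $2(r-2)-1$ points in $\R^{r-2}$, and repeated application of Lemma~\ref{Knr5} to push the configuration up to full affine span of $\R^{r-1}$ while preserving acuteness and controlling the diameter. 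One harmless slip: $2d-1$ is only the size of Danzer and Gr\"unbaum's \emph{construction} of an acute set in $\R^d$, not an upper bound (their conjecture to that effect was later disproved by Erd\H{o}s and F\"uredi), but your argument, like the paper's, uses only the construction, so the stated range $r \le n \le 2r-5$ and the conclusion $K(n,r) = \infty$ are unaffected.
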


\begin{proof}
Fix $n$ and~$r$ as above, and note that the
second condition is equivalent to the condition $r \leq n \leq 2r-5$. 
In~\cite{D&G}, a construction is given, for any $d \in \N$,
of an acute configuration of $2d-1$ points in~$\R^d$.
Set $d = r-2$, and note that $d \geq 3$ and $d+2 \leq n \leq 2d-1$.
Then we can choose in~$\R^d$ an $n$-point acute configuration
$P = \{p_1, \dots, p_n\}$ which, after the application of suitable
similarity transformations, we may assume to have metric diameter
$D(P) = 1$
and to lie in the closed ball of radius~$1$ centred at the origin
(a smaller ball can be chosen for the given diameter
by Jung's theorem, but minimality is not of concern here).

Embed $\R^d$ in $\R^{d+1}$ as the hyperplane with $(d+1)$th
coordinate~$0$.
For $\rho > 0$, let $c_0$ be the point $(0, \dots, 0, \rho) \in \R^{d+1}$. 
It is easy to check that for each $p \in P$ we have
$\rho \leq \|c_0 - p\| \leq \rho + 1/(2\rho)$.
Therefore, there exist distinct points $p'_1, \dots, p'_n$
which lie on the sphere~$S$ with centre~$c_0$ and radius~$\rho$
and satisfy $\|p_i - p'_i\| \leq 1/(2\rho)$ for all~$i$.
Since the angles among three points are continuous functions
of the points, a large enough choice of~$\rho$ will ensure
both that $P' = \{p'_1, \dots, p'_n\}$
is an acute configuration and that the metric diameter $D(P')$ of~$P'$
is as close as we wish to~$1$.

Denote by~$m$ the cardinality of any (and hence every) maximal
affinely independent subset of~$P'$. Since $P' \subseteq \R^{d+1}$,
we have $m \leq d+2$.

If $m$ is strictly less than~$d+2$,
then the affine flat spanned by~$P'$ is of dimension at most~$d$,
and we can apply Lemma~\ref{Knr5}, with $k$ in the lemma equal to~$d+1$, 
replacing one chosen point of~$P'$ by another point of~$\R^{d+1}$
that is arbitrarily close, is outside the affine flat,
and still lies on the sphere~$S$.
This yields an $n$-point configuration which lies on~$S$
and whose maximal affinely independent subsets have cardinality~$m+1$.

Therefore, if, beginning with~$P'$, we carry out such a replacement
process $d + 2 - m \geq 0$ times, we obtain an $n$-point configuration~$Q$
which lies on~$S$ and whose maximal affinely
independent subsets have the largest possible cardinality,
namely,~$d+2$. Also, by suitable choice of~$\rho$
and of the balls used in the applications of Lemma~\ref{Knr5},
we can ensure that $Q$ is an acute configuration and that
the metric diameter~$D(Q)$ of~$Q$ is arbitrarily close to~$1$.

Now consider the quasihypermetric space~$X_Q$
corresponding to~$Q$ (see our remarks at the start of this section).
This space has cardinality~$n$ and (again by our remarks above)
has maximal strictly quasihypermetric subspaces of cardinality~$d+2$.
Also, $M(X_Q) = 2\rho^2$, by Theorem~\nwCref{sphere200},
and so, since $\rho$ may be taken arbitrarily large, we have
$K(n, d+2) = K(n, r) = \infty$, as required.
\end{proof}

\begin{corollary}
\label{Knr20}
\mbox{}
\begin{enumerate}
\item[(1)]
$K(n, n) = \infty$ for $n \geq 5$.
\vspace{4pt}
\item[(2)]
$K(n, n-1) = \infty$ for $n \geq 7$.
\end{enumerate}
\end{corollary}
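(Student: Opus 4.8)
The plan is to derive both statements directly from Theorem~\ref{Knr10}, since each is simply the special case obtained by fixing~$r$ as a function of~$n$. The key simplification is to use the equivalent reformulation of the hypothesis of that theorem, namely $r \leq n \leq 2r - 5$, which was recorded at the start of its proof and is far more convenient to check here than the ceiling form $\lceil (n+5)/2 \rceil \leq r$.

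For part~(1) I would set $r = n$. The left-hand inequality $r \leq n$ then reads $n \leq n$ and is trivial, while the right-hand inequality $n \leq 2r - 5$ reads $n \leq 2n - 5$, which holds precisely when $n \geq 5$. Hence for every $n \geq 5$ the hypotheses of Theorem~\ref{Knr10} are met with $r = n$, and that theorem yields $K(n, n) = \infty$.

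For part~(2) I would set $r = n - 1$. Once more the left-hand inequality $r \leq n$ is immediate, while the right-hand inequality becomes $n \leq 2r - 5 = 2(n-1) - 5 = 2n - 7$, which holds precisely when $n \geq 7$. Thus for every $n \geq 7$ the hypotheses of Theorem~\ref{Knr10} hold with $r = n - 1$, and we obtain $K(n, n-1) = \infty$.

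There is essentially no obstacle here beyond verifying these two elementary inequalities; all the substantive content is carried by Theorem~\ref{Knr10}. The only point worth confirming explicitly is that the side condition $n \geq 5$ of that theorem is automatically satisfied in each case (trivially so in part~(2), where already $n \geq 7$), so that no additional restriction on~$n$ is introduced. Accordingly the argument is genuinely immediate, as the remark preceding the statement anticipates.
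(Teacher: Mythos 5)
Your proposal is correct and matches the paper exactly: the paper states that the corollary ``follows immediately'' from Theorem~\ref{Knr10}, which is precisely your specialization $r = n$ (giving $n \leq 2n-5$, i.e.\ $n \geq 5$) and $r = n-1$ (giving $n \leq 2n-7$, i.e.\ $n \geq 7$). The paper's subsequent Theorems \ref{Knr20.1} and~\ref{Knr20.2} are supplementary direct constructions of concrete witnessing spaces, not the proof of the corollary itself, so nothing is missing from your argument.
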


Though the above statements follow immediately from the theorem,
we think it is worthwhile to outline direct proofs
by the construction of quite concrete spaces.

\begin{theorem}[= Corollary \ref{Knr20}, part (1)]
\label{Knr20.1}
If $n \geq 5$, then for every $K > 0$, there exists
a strictly quasihypermetric space $(Z, d)$
with $|Z| = n$ and $D(Z) = 1$ such that $M(Z) > K$.
\end{theorem}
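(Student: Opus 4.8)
The plan is to realize~$Z$ as the S-embedding of a concrete euclidean configuration, using the two facts recalled at the start of this section: by part~(3) of Theorem~\nwCref{sphere400}, a finite space is strictly quasihypermetric exactly when its S-embedding is an affinely independent (non-obtuse, indeed acute) configuration; and by Theorem~\nwCref{sphere200}, a configuration lying on a sphere of radius~$\rho$ in a euclidean space of minimal dimension gives $M = 2\rho^2$. Since rescaling the metric multiplies $M(X)$ and $D(X)$ by the same factor, the quotient $M(X)/D(X)$ is scale-invariant, and the whole task reduces to placing $n$ affinely independent acute points on a sphere of radius~$\rho$ that is arbitrarily large relative to the diameter of the configuration; letting $\rho \to \infty$ and rescaling to diameter~$1$ then gives $M(Z) > K$. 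This is the case $r = n$ of Theorem~\ref{Knr10}, but we carry out the construction directly.

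First I would construct the configuration. Since $n \geq 5$ we have $n \leq 2(n-2) - 1$, so the construction of Danzer and Gr\"unbaum~\cite{D&G} yields an acute configuration of $2(n-2) - 1$ points in~$\R^{n-2}$; any $n$ of them form an acute configuration $P = \{p_1, \dots, p_n\}$, which after a similarity I take to have diameter~$1$ and to be centred near the origin. Embedding $\R^{n-2}$ as the hyperplane $\{x_{n-1} = 0\}$ in~$\R^{n-1}$, fixing $\rho > 0$, and letting $S$ be the sphere of radius~$\rho$ centred at $(0, \dots, 0, \rho)$ and tangent to that hyperplane at the origin, I project each~$p_i$ vertically onto the lower cap of~$S$ to obtain points $p_1', \dots, p_n'$ with $\|p_i - p_i'\| \to 0$ as $\rho \to \infty$. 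Because the angles among any triple of points vary continuously and $P$ is strictly acute, the configuration $P' = \{p_1', \dots, p_n'\}$ is acute and has diameter close to~$1$ once $\rho$ is large.

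The delicate step, which I expect to be the main obstacle, is affine independence, since it is exactly this that makes the resulting space strictly rather than merely non-strictly quasihypermetric. Let $m$ be the common cardinality of the maximal affinely independent subsets of~$P'$, so $m \leq n$. If $m < n$, then $P'$ spans an affine flat of dimension $m - 1 < n - 1$, and I apply Lemma~\ref{Knr5} to replace one point of~$P'$ by a point of~$S$ arbitrarily close to it but lying off that flat, thereby raising~$m$ by one; iterating as in the proof of Theorem~\ref{Knr10} until $m = n$ produces an $n$-point acute, affinely independent configuration~$Q$ lying on~$S$. The real difficulty is this simultaneous control of acuteness, affine independence, and diameter; everything else is routine.

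Finally I would read off the constant. As $Q$ is affinely independent it spans~$\R^{n-1}$, so the S-embedding is of minimal dimension, and as $Q$ lies on~$S$ its circumsphere is~$S$; hence the corresponding strictly quasihypermetric space has $M = 2\rho^2$ by Theorem~\nwCref{sphere200}. Writing $\delta$ for the euclidean diameter of~$Q$, so that the metric diameter equals~$\delta^2$ with $\delta \to 1$, I rescale the metric by the factor $1/\delta^2$ to obtain a strictly quasihypermetric space~$Z$ with $|Z| = n$ and $D(Z) = 1$ satisfying $M(Z) = 2\rho^2/\delta^2$. Taking $\rho$ large enough then makes $M(Z) > K$, as required.
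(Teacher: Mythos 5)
Your argument is correct, and correct at the same level of rigour as the paper's own proof of Theorem~\ref{Knr10}, but it is not the route the paper takes for this statement: what you have written out is essentially the case $r = n$ of Theorem~\ref{Knr10} (note that $\lceil (n+5)/2 \rceil \leq n$ holds exactly when $n \geq 5$), and that is precisely how Corollary~\ref{Knr20} is deduced in the paper; Theorem~\ref{Knr20.1} is then stated separately because the authors explicitly wanted a direct proof ``by the construction of quite concrete spaces.'' Their proof is entirely different in character: setting $m = n-2 \geq 3$, they take $X = \{x_1, \dots, x_m\}$ and $Y = \{y_1, y_2\}$, each with the discrete metric, and join them at cross-distance $c = (m-1)/(2m) + 1/4 + \epsilon$ using Theorem~\nwBref{2.1}, which yields at once that $Z = X \cup Y$ is strictly quasihypermetric with $D(Z) = 1$; Theorem~\nwBref{2.1.4} then exhibits an explicit invariant measure of mass~$1$ on~$Z$ whose value $\frac{1}{32\epsilon}\bigl(\frac{m-2}{m}\bigr)^2 + \frac{m-1}{2m} + \frac{1}{4} + \frac{\epsilon}{2}$ equals $M(Z)$ and tends to infinity as $\epsilon \to 0^+$ (the hypothesis $n \geq 5$ enters only through $m \geq 3$, which makes the coefficient $(m-2)/m$ nonzero). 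As for trade-offs: your sphere construction needs no machinery beyond what Theorem~\ref{Knr10} already uses (Danzer--Gr\"unbaum acute configurations, Lemma~\ref{Knr5}, and Theorems \nwCref{sphere200} and~\nwCref{sphere400}), but it inherits the perturbation bookkeeping you yourself flag as delicate --- simultaneous control of acuteness, affine independence and diameter, including the small glossed point (glossed identically in the paper's proof of Theorem~\ref{Knr10}, so not a gap relative to the paper) that in each application of Lemma~\ref{Knr5} you should replace a point that is affinely dependent on the remaining ones, so that the affine hull does not drop in dimension --- and it produces non-explicit spaces; the paper's construction avoids all limits and perturbations, gives completely explicit $n$-point spaces with a closed-form value of~$M$, and that concreteness is the sole reason the theorem is recorded at all despite following immediately from Theorem~\ref{Knr10}.
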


\begin{proof}
Write $m = n-2$, so that $m \geq 3$.
Give $X = \{x_1, \dots, x_m\}$ the discrete metric~$d_1$
and give $Y = \{y_1, y_2\}$ the discrete metric~$d_2$.
Let $Z = X \cup Y$, set $c = (m-1)/(2m) + 1/4 + \epsilon$
for any $\epsilon$ satisfying $0 < \epsilon \leq (m+2)/(4m)$,
and define $d \colon Z \times Z \to \R$ as in
Theorem~\nwBref{2.1}. Then the latter theorem shows that $(Z, d)$
is strictly quasihypermetric and that $D(Z) = 1$,
and Theorem~\nwBref{2.1.4} gives a measure of mass~$1$
on~$Z$ which is invariant with value
\[
\frac{1}{32\epsilon} \biggl(\frac{m-2}{m}\biggr)^2
  + \frac{m-1}{2m} + \frac{1}{4} + \frac{\epsilon}{2}.
\]
The result follows.
\end{proof}

\begin{theorem}[= Corollary \ref{Knr20}, part (2)]
\label{Knr20.2}
If $n \geq 7$, then for every $K > 0$, there exists
a quasihypermetric, non-strictly quasihypermetric space $(Z, d)$
with $|Z| = n$ and $D(Z) = 1$ such that $M(Z) > K$
and with maximal strictly quasihypermetric subspaces of cardinality
$n-1$.
\end{theorem}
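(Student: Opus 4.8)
The plan is to realise the required space geometrically, as an S-embedded point configuration, and to obtain it by enlarging the strictly quasihypermetric space produced in the proof of Theorem~\ref{Knr20.1}. Since $n \geq 7$ gives $n-1 \geq 6 \geq 5$, Theorem~\ref{Knr20.1}, applied with $n-1$ in place of $n$, supplies a strictly quasihypermetric space $Z_0$ with $|Z_0| = n-1$, $D(Z_0) = 1$ and $M(Z_0) > K$. Its S-embedding is a configuration $P_0$ of $n-1$ points in a euclidean space, which is affinely independent because $Z_0$ is strictly quasihypermetric (Theorem~\nwCref{sphere400}); thus $P_0$ affinely spans an $(n-2)$-dimensional flat, which I take to be $\R^{n-2}$. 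As $M(Z_0) < \infty$, the set $P_0$ lies on a sphere $S \subseteq \R^{n-2}$ of some radius $\rho$ with $M(Z_0) = 2\rho^2$ (Theorem~\nwCref{sphere200}, as in the proof of Lemma~\ref{simple}); and since $D(Z_0) = 1$, the points of $P_0$ have pairwise euclidean distance at most $1$, so $P_0$ occupies a small cap of the large sphere $S$.

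First I would adjoin one further point $p \in S$, distinct from the points of $P_0$ and lying in the cap (within euclidean distance $1$ of every point of $P_0$), chosen so that $P_0 \cup \{p\}$ is a non-obtuse configuration. Let $Z$ be the associated $n$-point metric space, so that $Z_0$ is a subspace of $Z$. Because $p$ lies within distance $1$ of each point of $P_0$, no new squared distance exceeds $1$, and hence $D(Z) = D(Z_0) = 1$. The whole of $Z$ still lies on $S$, and $P_0 \subseteq Z$ already spans $\R^{n-2}$, so this is a minimal-dimensional embedding on a sphere of radius $\rho$; thus $M(Z) = 2\rho^2 = M(Z_0) > K$ by Theorem~\nwCref{sphere200}. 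The non-obtuse configuration $P_0 \cup \{p\}$ consists of $n$ points in the $(n-2)$-dimensional space $\R^{n-2}$ and is therefore affinely dependent, so $Z$ is quasihypermetric but not strictly quasihypermetric (Theorem~\nwCref{sphere400}). On the other hand $P_0$ is affinely independent, so $Z_0$ is a strictly quasihypermetric subspace of $Z$ with $|Z_0| = n-1$; as the only strictly larger subspace is $Z$ itself, which is not strictly quasihypermetric, $Z_0$ is a maximal strictly quasihypermetric subspace, and Corollary~\ref{RW4} then forces every maximal strictly quasihypermetric subspace of $Z$ to have cardinality $n-1$. This would give all the asserted properties of $Z$.

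The one step that requires real work, and that I expect to be the main obstacle, is the existence of a suitable adjoined point $p$: I must place $p$ on $S$, inside the cap, so that every triangle in $P_0 \cup \{p\}$ remains non-obtuse (equivalently, so that the enlarged configuration is still the S-embedding of a metric space). Since $P_0$ sits in a small cap of a sphere of large radius $\rho$, near $P_0$ the sphere is almost flat, and the problem becomes that of adjoining a point to a nearly-flat affinely independent (and, after a harmless preliminary perturbation, acute) configuration without creating an obtuse angle; the room to do this is exactly what is guaranteed by the Danzer--Gr\"unbaum bound on non-obtuse configurations used in Theorem~\ref{D&G}, while Lemma~\ref{Knr5} and the continuity of the angles as functions of the points (as exploited in the proof of Theorem~\ref{Knr10}) let me keep $p$ on $S$, distinct from the existing points, and away from any diametrically opposite position. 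I would also note that the statement is precisely the case $r = n-1$ of Theorem~\ref{Knr10} (where $d = n-3 \geq 3$ and the inequality $n \leq 2d-1$ reduces to $n \geq 7$), so the present argument may be viewed as a concrete rendering of that construction, built from the explicit space of Theorem~\ref{Knr20.1} rather than from a Danzer--Gr\"unbaum acute configuration.
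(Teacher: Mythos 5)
Your architecture is coherent and genuinely different from the paper's, but it has a real gap at exactly the step you flag, and the justification you offer for that step is fallacious. The Danzer--Gr\"unbaum theorem is an upper bound on the cardinality of non-obtuse configurations; it is not an extension theorem, and the fact that $n \leq 2^{n-2}$ gives no ``room'' guarantee for adjoining a point to a given sub-maximal configuration. Extension can fail outright far below the bound, and it fails precisely in your setting of points constrained to a sphere: four concyclic points in the plane are non-obtuse only if they are the vertices of a rectangle (opposite inscribed angles sum to $\pi$), so no acute triangle --- an equilateral one, say, with $3 < 2^2$ points --- admits \emph{any} fourth point on its circumcircle keeping all angles at most $\pi/2$. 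This also shows that your ``harmless preliminary perturbation to acute'' does not help: acuteness is what makes the planar example non-extendable on its circumsphere. Your situation is further constrained because, to preserve $D(Z) = 1$, the new point $p$ must lie in the cap of euclidean radius~$1$; easy choices such as an antipode are excluded (and by Lemma~\ref{simple} a diametral pair would force $M/D = 1/2$ after renormalization, destroying $M(Z) > K$), while inside the cap the simplex $P_0$ is nearly flat (its radial thickness is of order $1/\rho$, and $\rho$ is large exactly when $K$ is), making the extension problem hardest where you need it. Lemma~\ref{Knr5} cannot fill this hole either: it only produces nearby points on the sphere outside an affine flat, and in Theorem~\ref{Knr10} it is applied to perturb an \emph{already acute} configuration, exploiting the openness of acuteness --- it never manufactures non-obtuseness for an added point. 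Nothing in your proposal shows that the particular nearly degenerate non-obtuse simplex arising from Theorem~\ref{Knr20.1} extends at all.

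The paper sidesteps all of this geometry: its proof is a purely metric gluing. One takes the discrete space on $m = n-4 \geq 3$ points and the $4$-point quadrilateral on a circle with the arc-length metric (a concrete quasihypermetric, non-strictly quasihypermetric space), joins them at constant distance $c = \frac{1}{2} + \epsilon$ via Theorem~\nwBref{2.1}, reads off from Theorem~\nwBref{2.1.4} an invariant measure of mass~$1$ with value $\frac{1}{8\epsilon}\bigl(\frac{m-2}{m}\bigr)^2 + \frac{1}{2} + \frac{\epsilon}{2}$, which is finite and blows up as $\epsilon \to 0$, and checks by a second application of Theorem~\nwBref{2.1} that deleting one circle point leaves a strictly quasihypermetric $(n-1)$-point subspace. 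If you want a geometric proof, the viable route is the one already in Theorem~\ref{Knr10} (of which, as you correctly note, this statement is the case $r = n-1$): start from an acute configuration, whose angle conditions are stable under perturbation, lift it to a large sphere and inflate its affine dimension --- rather than trying to extend a given non-obtuse configuration in place, which is the one move that can fail.
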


\begin{proof}
Write $m = n-4$, so that $m \geq 3$.
Give $X = \{x_1, \dots, x_m\}$ the discrete metric~$d_1$.
Let $Y = \{y_1, y_2, y_3, y_4\}$, where $y_1, y_2, y_3, y_4$
are equally spaced points placed consecutively around a circle
of radius~$2/(\pi m)$, and give~$Y$ the arc-length metric~$d_2$
(see Example~\nwAref{nonstrict} and Corollary~\nwBref{2.4}).
Let $Z = X \cup Y$ and set $c = \frac{1}{2} + \epsilon$
for any~$\epsilon$ satisfying $0 < \epsilon \leq \frac{1}{2}$.
Defining $d \colon Z \times Z \to \R$ as in Theorem~\nwBref{2.1},
we find that $(Z, d)$ is quasihypermetric and non-strictly
quasihypermetric and that $D(Z) = 1$.
Application of Theorem~\nwBref{2.1.4} now gives an invariant
measure of mass~$1$ on~$Z$ with value
\[
\frac{1}{8\epsilon} \biggl(\frac{m-2}{m}\biggr)^2
  + \frac{1}{2} + \frac{\epsilon}{2}.
\]
A second application of Theorem~\nwBref{2.1} shows that
the subspace of~$Z$ obtained by omitting one point of~$Y$
is strictly quasihypermetric, and the result follows.
\end{proof}

\begin{remark}
\label{Knr25}
Although by Corollary~\ref{RW4} all maximal strictly
quasihypermetric subspaces of a given finite quasihypermetric space
have the same cardinality, the proof of the last theorem
allows us to see easily that not all subspaces
of that cardinality need be strictly quasihypermetric.
Indeed, if we remove from~$Z$ in Theorem~\ref{Knr20.2}
a point of~$X$ instead of a point of~$Y$, then the new subspace
has $n-1$ elements but fails to be strictly quasihypermetric
since it has the non-strictly quasihypermetric space~$Y$ as a subspace. 
\end{remark}

\begin{theorem}
\label{Knr30}
$K(n, r) \geq K(n+1, r)$, provided that both numbers are defined.
\end{theorem}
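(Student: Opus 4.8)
The plan is to show that each space of type $(n+1, r)$ can be reduced, by deleting one point and then rescaling, to a space of type $(n, r)$ whose value of $M$ is at least as large; taking the supremum over all spaces of type $(n+1, r)$ then gives the result. To this end I would fix a space $(W, d)$ of type $(n+1, r)$ together with a maximal strictly quasihypermetric subspace $Y$ of~$W$, so that $|Y| = r$. Since $K(n, r)$ and $K(n+1, r)$ are both defined we have $r \leq n$, so $Y \neq W$ and we may choose a point $x \in W \setminus Y$ and put $X = W \setminus \{x\}$, a space with $n$ points. It is routine to check that $X$ is quasihypermetric and that $Y$ is still a maximal strictly quasihypermetric subspace of~$X$, since any strictly quasihypermetric subspace of~$X$ properly containing~$Y$ would contradict the maximality of~$Y$ in~$W$; by Corollary~\ref{RW4}, every maximal strictly quasihypermetric subspace of~$X$ then has cardinality~$r$.

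The crux is to prove that deleting~$x$ does not lower the value of~$M$, that is, $M(X) = M(W)$. The containment $\Mone(X) \subseteq \Mone(W)$ gives only $M(X) \leq M(W)$, so the substance lies in the reverse inequality, which I would obtain by exhibiting a maximal measure on~$X$ of value~$M(W)$ that happens to be supported on~$Y$. Since $Y$ is strictly quasihypermetric with $M(Y) \leq M(W) < \infty$, Theorems~\ref{theoremDcorollary} and~\nwBref{2.2} provide a maximal measure $\nu \in \Mone(Y)$, for which $d_\nu$ is constant on~$Y$ with value~$M(Y)$. Because $Y$ is a maximal strictly quasihypermetric subspace of~$W$ and $M(W) < \infty$, part~(2) of Corollary~\ref{strict2corollary} shows that $d_\nu$ is in fact constant on all of~$W$ and that $M(Y) = M(W)$, so $d_\nu = M(W) \cdot \underline{1}$ on~$W$ and hence on~$X$. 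As $\nu$ is supported on $Y \subseteq X$, it lies in $\Mone(X)$ and is $d$-invariant on~$X$ with value~$M(W)$; since $X$ is quasihypermetric, part~(2) of Theorem~\ref{2.2again} makes $\nu$ maximal on~$X$ with $M(X) = M(W)$.

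Finally, deleting~$x$ may reduce the diameter, so $X$ need only satisfy $D(X) \leq 1$. I would rescale the metric of~$X$ by the factor $1/D(X)$ to obtain a space $X'$ of diameter~$1$; rescaling multiplies~$M$ by $1/D(X) \geq 1$ and preserves the quasihypermetric property, the strict quasihypermetricity of subspaces, and all relevant cardinalities, so $X'$ has type $(n, r)$ and $M(X') = M(W)/D(X) \geq M(W)$. Therefore $K(n, r) \geq M(X') \geq M(W)$ for every space~$W$ of type $(n+1, r)$, and taking the supremum over such~$W$ gives $K(n, r) \geq K(n+1, r)$. I expect the main obstacle to be the equality $M(X) = M(W)$: the easy direction is the displayed containment, while the reverse inequality hinges on the maximal measure of~$W$ being realizable already on the subspace~$Y$, which is exactly what Corollary~\ref{strict2corollary} supplies.
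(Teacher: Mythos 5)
Your proof is correct and follows essentially the same route as the paper: delete a point of the type-$(n+1, r)$ space lying outside a fixed maximal strictly quasihypermetric subspace, show that $M$ is unchanged under the deletion because that subspace already carries the value of~$M$ for both the original and the reduced space, and then rescale to diameter~$1$ before taking the supremum. The only cosmetic difference is that the paper invokes Theorem~\ref{strict2} twice (viewing the subspace as maximal strictly quasihypermetric in both the original and the reduced space), whereas you route the argument through the explicit maximal measure on the subspace via Corollary~\ref{strict2corollary} and Theorem~\ref{2.2again}, part~(2) --- an equivalent mechanism in the finite setting, where maximal measures exist.
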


\begin{proof}
Assuming that both the given numbers are defined, we have $n \geq r$
and $n \geq 3$. If $\Xd$ is a space of type $(n+1, r)$,
then every maximal strictly quasihypermetric subspace of~$X$
has $r < n+1$ points. Let $Y = X \setminus \{x\}$,
where $x \in X$ lies outside some fixed maximal strictly
quasihypermetric subspace~$S$ of~$X$. Then $|Y| = n$,
all maximal strictly quasihypermetric subspaces of~$Y$,
of which $S$ is one, have cardinality~$r$ and $0 < D(Y) \leq 1$.
Also, by Theorem~\ref{strict2} and since $S$ is a maximal strictly
quasihypermetric subspace of both $X$ and~$Y$,
we have $M(S) = M(X) = M(Y)$.
Hence, normalizing distances in~$Y$ to give a space~$Y'$
with $D(Y') = 1$, we have $M(Y') \geq M(X)$, and the result follows.
\end{proof}

In addition to the classes of values of $K(n, r)$ given by
Theorems \ref{Kbox} and~\ref{Knr10} above, we note some further
information about individual values of $K(n, r)$,
obtained by more ad hoc arguments.

\begin{theorem}
\label{Knr40}
\mbox{}
\begin{enumerate}
\item
$K(3, 3) = \frac{2}{3}$.
\vspace{4pt}
\item
$K(4, 4) = \frac{3}{4}$.
\vspace{4pt}
\item
$\frac{2}{3} \leq K(5, 4) \leq \frac{3}{4}$.
\vspace{4pt}
\item
$K(6, 4) = \frac{2}{3}$.
\vspace{4pt}
\item
$K(7, 4) = \frac{1}{2}$.
\end{enumerate}
\end{theorem}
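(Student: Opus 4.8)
The plan is to push everything through the S-embedding dictionary. A space of type $(n,r)$ with $M(X)<\infty$ corresponds to a configuration of $n$ points in $\R^{r-1}$ that is non-obtuse, affinely spans $\R^{r-1}$ (so its maximal affinely independent subsets, of cardinality $r$, are exactly the S-embeddings of the maximal strictly quasihypermetric subspaces, by Theorem~\nwCref{sphere400}), and — since finiteness of $M(X)$ forces the points onto a common sphere — lies on a sphere of some radius $\rho$. Theorem~\nwCref{sphere200} then gives $M(X)=2\rho^{2}$. Normalising the euclidean diameter (the largest edge) to $1$, every assertion becomes a statement about the largest circumradius attainable by such a configuration, with $K(n,r)=2\sup\rho^{2}$.

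For parts (1) and (2) the relevant configurations are full simplices (here $r=n$, so the points are affinely independent and lie on their circumsphere). The lower bounds come from the regular simplex, that is, the discrete metric on $n$ points, which is strictly quasihypermetric of diameter $1$ with $M=(n-1)/n$; this gives $K(3,3)\ge 2/3$ and $K(4,4)\ge 3/4$. For the upper bounds I would invoke Jung's theorem: a set of euclidean diameter $1$ in $\R^{d}$ has minimal enclosing ball of radius at most $\sqrt{d/(2(d+1))}$, with the regular simplex extremal. When the circumcentre lies inside the simplex the circumradius equals the radius of the minimal enclosing ball, and Jung's bound (with $d=n-1$) yields $\rho^{2}\le (n-1)/(2n)$, hence $M=2\rho^{2}\le (n-1)/n$; for $n=3$ this is automatic, since the circumcentre of a non-obtuse triangle always lies in the triangle. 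Part (3) then needs no separate construction: by the monotonicity $K(n,r)\ge K(n+1,r)$ of Theorem~\ref{Knr30} together with parts (2) and (4), $2/3=K(6,4)\le K(5,4)\le K(4,4)=3/4$.

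For parts (4) and (5) the configurations have $r=4$ but $n>r$, so $n$ non-obtuse points spanning $\R^{3}$ and lying on a common sphere. The lower bound in part~(5) is free: $K(7,4)\ge K(8,4)=1/2$ by Theorem~\ref{Kbox} and Theorem~\ref{Knr30}. The lower bound $K(6,4)\ge 2/3$ I would get from an explicit configuration, for instance a suitable triangular antiprism inscribed in a sphere of radius $1/\sqrt3$ (twisted away from the octahedral angle so as to avoid antipodal pairs) with largest edge equal to $1$; one checks it is non-obtuse and spanning, so $M=2\rho^{2}=2/3$. The upper bounds rest on Lemma~\ref{simple} and the structure of large non-obtuse configurations from Danzer and Gr\"unbaum~\cite{D&G}: for part~(5) one shows that any $7$ non-obtuse points spanning $\R^{3}$ must contain an antipodal pair of the sphere, whence $\rho\le 1/2$ and Lemma~\ref{simple} forces $M=1/2$; for part~(4) one shows analogously that some pair subtends a central angle of at least $2\pi/3$, giving $\rho\sqrt3\le 1$, that is $\rho\le 1/\sqrt3$ and $M\le 2/3$.

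The main obstacle lies in the upper bounds. For part~(2) the circumcentre of a non-obtuse tetrahedron need not be interior (the right-angled corner $\{0,e_1,e_2,e_3\}$ is non-obtuse yet has exterior circumcentre), so Jung's theorem, which controls only the minimal enclosing ball, does not directly bound the circumradius; this exterior case is the crux, and I would settle it by a direct low-dimensional optimisation, pushing to the boundary of the non-obtuseness constraints where some face angle is a right angle and checking that the circumradius there still does not exceed $\sqrt{3/8}$. For parts~(4) and~(5) the difficulty is entirely combinatorial-geometric: showing that this many points in $\R^{3}$ cannot be packed within small central angles without forcing a diameter chord (respectively a chord at angle $2\pi/3$), which is precisely where the Danzer--Gr\"unbaum classification of near-maximal non-obtuse configurations must be brought to bear.
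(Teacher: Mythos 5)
Your framework (S-embeddings, $M=2\rho^2$ via Theorem~\nwCref{sphere200}, and part~(3) by squeezing with Theorem~\ref{Knr30}) matches the paper's, and your lower bounds for (1), (2) and (5) are sound, but the load-bearing steps are gaps, and one construction is outright false. For (2), you correctly identify that Jung's theorem controls only the minimal enclosing ball and fails when the circumcentre of a non-obtuse tetrahedron is exterior, but the ``direct low-dimensional optimisation'' you defer to is exactly the missing proof. The paper avoids this entirely by a different route: every $4$-point metric space embeds isometrically in $(\R^2,\|\cdot\|_\infty)$ by~\cite{Wolfe}, hence in $(\R^2,\|\cdot\|_1)$, and the $L^1$ machinery (Theorem~\ref{L1G2}, yielding Theorem~\ref{L1H}) gives $M(X)\leq\frac{3}{4}D(X)$ with no euclidean case analysis. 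For (4) and~(5), you appeal to a ``Danzer--Gr\"unbaum classification of near-maximal non-obtuse configurations,'' but \cite{D&G} contains no classification of $6$- or $7$-point non-obtuse configurations in $\R^3$; the paper's source is Croft~\cite{Croft}, who classifies the $6$-point configurations into three types --- two of which contain antipodal pairs on their circumspheres, giving $M/D=\frac{1}{2}$ by Lemma~\ref{simple}, while the third family yields values filling exactly the open interval $(\frac{1}{2},\frac{2}{3})$ --- and who remarks that every $7$-point non-obtuse configuration in $\R^3$ consists of seven vertices of a parallelepiped, whence Lemma~\ref{simple} gives~(5). Your ``antipodal pair'' and ``central angle $\geq 2\pi/3$'' claims are restatements of what must be proved, not proofs.

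Concretely false is your lower bound for (4): a $6$-point configuration attaining $M=\frac{2}{3}D$ exactly cannot exist --- the paper records explicitly that the supremum defining $K(6,4)$ is \emph{not} attained. Indeed, for any prism or antiprism with triangles of circumradius~$r$ at heights $\pm h$, $h>0$, on the sphere of radius $\rho=\sqrt{r^2+h^2}$, the cross chord whose planar angular offset $\Delta$ is closest to $\pi$ has $\Delta\geq 2\pi/3$, so its central angle~$\gamma$ satisfies
\[
\cos\gamma \;=\; \frac{r^2\cos\Delta - h^2}{r^2+h^2} \;\leq\; \frac{-\frac{1}{2}r^2 - h^2}{r^2+h^2} \;<\; -\frac{1}{2},
\]
hence $\gamma_{\max}>2\pi/3$ strictly and $M/D = 1/\bigl(2\sin^2(\gamma_{\max}/2)\bigr) < \frac{2}{3}$ strictly. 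What you can salvage is the limit: the equilateral triangular prism family gives $M/D = (r^2+h^2)/\bigl(\tfrac{3}{2}r^2+2h^2\bigr) \to \frac{2}{3}$ as $h\to 0^+$, which proves $K(6,4)\geq\frac{2}{3}$ without attainment. Finally, your dictionary silently uses that $M(X)<\infty$ forces the S-embedded points onto a common sphere; this needs a citation (the paper never needs this direction for the upper bounds, since Croft's configurations all lie on circumspheres, and for $n=r$ the circumsphere of an affinely independent set is automatic).
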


\begin{proof}
We omit the proof of~(1), which follows by a straightforward
maximization argument. Theorem~\ref{L1H} below shows that
$K(4, 4) \leq \frac{3}{4}$, and (2) then follows from the observation
that a discrete space~$X$ with $4$ points is strictly quasihypermetric
and has $M(X) = \frac{3}{4}$.
Croft~\cite[pp.\ 305--306]{Croft} gives (without proof) a
classification of the non-obtuse configurations of $6$~points in~$\R^3$.
These fall into three classes. It is easy to check that all configurations
of Croft's types ($\beta$) and~($\gamma$) lie on circumspheres
in~$\R^3$ in such a way that a pair of diametrically opposite points
exists, and it follows from Lemma~\ref{simple} that the corresponding
metric spaces (see the discussion at the beginning of this section),
normalized to have diameter~$1$, have $M$ equal to~$\frac{1}{2}$.
Croft's configurations of type~($\alpha$),
though lying on a circumsphere, do not in general have a pair
of diametrically opposite points, but a direct argument,
which we omit, shows that the normalized values of~$M$
for the corresponding spaces occupy precisely the interval
$(\frac{1}{2}, \frac{2}{3})$. It follows that the values of~$M$ for
spaces of type $(6, 4)$ make up precisely the interval
$[\frac{1}{2}, \frac{2}{3})$, and in particular that
$K(6, 4) = \frac{2}{3}$, proving~(4). (We record explicitly
the fact that the supremum defining $K(6, 4)$ is not attained.)
Croft also remarks that all non-obtuse configurations
of $7$~points in~$\R^3$ consist of $7$~vertices of a parallelepiped,
and since such a configuration clearly has a circumsphere containing
a pair of diametrically opposite points,
Lemma~\ref{simple} gives $K(7, 4) = \frac{1}{2}$, proving~(5).
Finally, (2) and~(4), with Theorem~\ref{Knr30}, give 
$\frac{2}{3} \leq K(5, 4) \leq \frac{3}{4}$, which is~(3).
\end{proof}

\begin{remark}
We do not know the value of $K(6, 5)$, or whether it is finite
or infinite. 
\end{remark}

\section{The Value of $M$ in Finite $L^1$-Embeddable Spaces}

It is well known that every compact $L^1$-embeddable metric space
is quasihypermetric (see, for example, Theorem~\nwCref{2.15});
indeed, such spaces are hypermetric (see Remark~\nwCref{Assouad}).
If we restrict attention within the class of finite quasihypermetric
spaces to the class of finite $L^1$-embeddable spaces,
we can expect more detailed information
about the constant~$M$ to become available.
We derive such information in this section.

We begin with two lemmas.

\begin{lemma}
\label{L1A}
For $n \geq 1$, let $x_1, \dots, x_n \in \R$ and
$\alpha_1, \dots, \alpha_n \in \R$ be such that
$x_1 \leq \dots \leq x_n$ and $\sum_{i=1}^n \alpha_i = 1$.
\vspace{4pt}
\begin{enumerate}
\item[(1)]
If $\alpha_1 < 0$, then
$
\sum_{i,j=1}^n \alpha_i \alpha_j |x_i - x_j|
\leq 
\sum_{i,j=2}^n \beta_i \beta_j |x_i - x_j|,
$
where $\beta_2 = \alpha_1 + \alpha_2$ and
$\beta_3 = \alpha_3, \dots, \beta_n = \alpha_n$.
\vspace{4pt}
\item[(2)]
If $\alpha_n < 0$, then
$
\sum_{i,j=1}^n \alpha_i \alpha_j |x_i - x_j|
\leq 
\sum_{i,j=1}^{n-1} \beta_i \beta_j |x_i - x_j|,
$
where $\beta_1 = \alpha_1, \dots, \beta_{n-2} = \alpha_{n-2}$
and $\beta_{n-1} = \alpha_{n-1} + \alpha_n$.
\end{enumerate}
\end{lemma}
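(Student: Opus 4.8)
The plan is to treat both parts as instances of a single mass-merging estimate and to reduce~(2) to~(1) by symmetry. First I would observe that the reflection $x_i \mapsto -x_{n+1-i}$, $\alpha_i \mapsto \alpha_{n+1-i}$, preserves the ordering hypothesis, preserves the normalization $\sum_i \alpha_i = 1$, and leaves the double sum $\sum_{i,j} \alpha_i \alpha_j |x_i - x_j|$ unchanged, while interchanging the roles of the smallest and largest points. Under it the hypothesis $\alpha_n < 0$ of~(2) becomes the hypothesis $\alpha_1 < 0$ of~(1), and the merging of $\alpha_n$ into $\alpha_{n-1}$ in~(2) becomes the merging of $\alpha_1$ into $\alpha_2$ in~(1). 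Hence it suffices to prove~(1), and I may assume $n \geq 2$, since for $n = 1$ the normalization forces $\alpha_1 = 1$ and the hypothesis $\alpha_1 < 0$ is vacuous.

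For~(1) I would realize the right-hand sum as a perturbation of the left-hand sum. Setting $\mu = \sum_{i=1}^n \alpha_i \delta_{x_i}$ and $\theta = \delta_{x_2} - \delta_{x_1}$, the passage from the coefficients $(\alpha_1, \dots, \alpha_n)$ to $(\beta_2, \dots, \beta_n)$ is exactly the replacement of $\mu$ by $\mu + \alpha_1 \theta$, that is, the moving of the mass $\alpha_1$ from $x_1$ to $x_2$. Writing $J(\sigma, \tau) = \sum_{i,j} \sigma_i \tau_j |x_i - x_j|$ for the (symmetric, bilinear) pairing of signed weights supported on $\{x_1, \dots, x_n\}$, the difference of the two sums expands as a purely algebraic identity into $2 \alpha_1 J(\mu, \theta) + \alpha_1^2 J(\theta, \theta)$. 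Here $J(\theta, \theta) = -2(x_2 - x_1)$ comes from the self-interaction of the moved mass, and $J(\mu, \theta) = \sum_i \alpha_i \bigl(|x_i - x_2| - |x_i - x_1|\bigr)$ collects the cross terms. The decisive simplification is that, because $x_1 \leq x_2 \leq \dots \leq x_n$, each difference $|x_j - x_2| - |x_j - x_1|$ for $j \geq 3$ collapses to $-(x_2 - x_1)$, while the $j = 1$ and $j = 2$ terms contribute $+(x_2 - x_1)$ and $-(x_2 - x_1)$; combining these with $\sum_{j \geq 2} \alpha_j = 1 - \alpha_1$ telescopes the sum to $J(\mu, \theta) = (2\alpha_1 - 1)(x_2 - x_1)$.

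Substituting these two evaluations, I expect the entire difference (right-hand sum minus left-hand sum) to reduce to the single term
\[
-2 \alpha_1 (1 - \alpha_1)(x_2 - x_1).
\]
This is manifestly nonnegative exactly under the stated hypotheses: $\alpha_1 < 0$ gives both $-\alpha_1 > 0$ and $1 - \alpha_1 > 0$, while $x_2 - x_1 \geq 0$ by the ordering, so merging $\alpha_1$ into $\alpha_2$ never decreases the double sum, which is the assertion of~(1). There is no deep obstacle here: the only genuine work is the bookkeeping in evaluating $J(\mu, \theta)$, and the one step that must be handled carefully is the use of monotonicity of the $x_i$ to resolve every absolute value by sign (so that the $|\cdot|$ disappear) before invoking the mass normalization to collapse the sum. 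Both hypotheses of the lemma---the sign of the extreme weight and the total mass being $1$---enter precisely at this final sign determination.
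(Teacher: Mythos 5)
Your proposal is correct and is essentially the paper's own argument in different packaging: the exact difference $-2\alpha_1(1-\alpha_1)(x_2-x_1)$ you extract from the bilinear-form expansion is precisely the slack in the paper's single inequality step, which isolates the cross terms involving $x_1$ and replaces $-2\alpha_1(1-\alpha_1)x_1$ by $-2\alpha_1(1-\alpha_1)x_2$ using $\alpha_1<0$ and $x_1\leq x_2$. Your reflection $x_i\mapsto -x_{n+1-i}$, $\alpha_i\mapsto\alpha_{n+1-i}$ for part~(2) is just a precise rendering of the paper's closing remark that the case $\alpha_n<0$ is similar.
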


\begin{proof}
Let $\alpha_1 < 0$. Then
\begin{eqnarray*}
\lefteqn{\sum_{i,j=1}^n \alpha_i \alpha_j |x_i - x_j|} \\
 & = &
\sum_{i,j=2}^n \alpha_i \alpha_j |x_i - x_j|
    + 2 \alpha_1 \sum_{j=2}^n \alpha_j x_j
    - 2 \alpha_1 (1 - \alpha_1) x_1 \\
 & \leq &
\sum_{i,j=2}^n \alpha_i \alpha_j |x_i - x_j|
    + 2 \alpha_1 \sum_{j=2}^n \alpha_j x_j
    - 2 \alpha_1 (1 - \alpha_1) x_2 \\
 & = &
\sum_{i,j=2}^n \beta_i \beta_j |x_i - x_j|.
\end{eqnarray*}
The case $\alpha_n < 0$ is similar.
\end{proof}

Note in the lemma that if $\alpha_1 < 0$,
then $\sum_{i=2}^n \beta_i = 1$,
and that if also $\alpha_2 < 0$, then $\beta_2 < 0$.
Similarly, if $\alpha_n < 0$, then $\sum_{i=1}^{n-1} \beta_i = 1$,
and if also $\alpha_{n-1} < 0$, then $\beta_{n-1} < 0$.

\begin{lemma}
\label{L1B}
For $n \geq 1$, let $x_1, \dots, x_n \in \R$ and
$\alpha_1, \dots, \alpha_n \in \R$ be such that
$\sum_{i=1}^n \alpha_i = 1$.
\vspace{4pt}
\begin{enumerate}
\item
$
\sum_{i,j=1}^n \alpha_i \alpha_j |x_i - x_j|
\leq \frac{1}{2} \max_{1 \leq i,j \leq n} |x_i - x_j|.
$
\vspace{4pt}
\item
If $x_1 \leq \dots \leq x_n$ and
$r = \min \{i : 1 \leq i \leq n \mbox{ and } \alpha_i \geq 0\}$
and
$s = \max \{i : 1 \leq i \leq n \mbox{ and } \alpha_i \geq 0\}$,
then
$
\sum_{i,j=1}^n \alpha_i \alpha_j |x_i - x_j| \leq \frac{1}{2} (x_s - x_r).
$
\end{enumerate}
\end{lemma}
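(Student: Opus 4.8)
The plan is to reduce both parts to a single exact identity for the quadratic form, after which each estimate becomes elementary. Since all quantities are invariant under permuting the pairs $(x_i,\alpha_i)$, for part~(1) I may assume $x_1 \le \dots \le x_n$ as in part~(2). First I would establish the Abel-summation identity
\[
\sum_{i,j=1}^n \alpha_i \alpha_j |x_i - x_j|
 =
2 \sum_{k=1}^{n-1} (x_{k+1} - x_k)\, S_k (1 - S_k),
\]
where $S_k = \alpha_1 + \dots + \alpha_k$. To derive it, write the left side as $2\sum_{i<j}\alpha_i\alpha_j(x_j-x_i)$, substitute $x_j - x_i = \sum_{k=i}^{j-1}(x_{k+1}-x_k)$, interchange the order of summation, and use $\sum_{i=1}^n \alpha_i = 1$ to recognise the inner factor as $\bigl(\sum_{i\le k}\alpha_i\bigr)\bigl(\sum_{j>k}\alpha_j\bigr) = S_k(1-S_k)$. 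This is the discrete analogue of the classical formula for the energy $I(\mu)$ of a mass-one measure on the line.

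Part~(1) is then immediate: each factor satisfies $S_k(1-S_k) \le \tfrac14$ while $x_{k+1}-x_k \ge 0$, so the right-hand side is at most $\tfrac12 \sum_{k=1}^{n-1}(x_{k+1}-x_k) = \tfrac12 (x_n - x_1) = \tfrac12 \max_{i,j}|x_i-x_j|$.

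For part~(2) I would split the sum at $k = r$ and $k = s$ and control each block by the sign of $S_k(1-S_k)$. By definition of~$r$, every $\alpha_i$ with $i < r$ is negative, so $S_k < 0$ for $1 \le k \le r-1$, forcing $S_k(1-S_k) < 0$; those terms are $\le 0$. By definition of~$s$, every $\alpha_i$ with $i > s$ is negative, so $S_k = 1 - \sum_{i>k}\alpha_i > 1$ for $s \le k \le n-1$, again giving $S_k(1-S_k) < 0$; those terms are $\le 0$ as well. On the remaining range $r \le k \le s-1$ I use only the crude bound $S_k(1-S_k) \le \tfrac14$, so that block contributes at most $\tfrac14 \sum_{k=r}^{s-1}(x_{k+1}-x_k) = \tfrac14 (x_s - x_r)$. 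Doubling yields $\le \tfrac12 (x_s - x_r)$.

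The real content is the identity in the first paragraph; once it is in hand the estimates are routine sign bookkeeping, and the definitions of $r$ and $s$ are engineered precisely so that the ``overhang'' intervals $[x_1,x_r]$ and $[x_s,x_n]$ contribute nonpositively. The one point needing care is the degenerate bookkeeping: repeated $x_i$-values (the identity still holds, as the vanishing gaps drop out) and the case $r = s$, in which the middle block is empty and the claimed bound reads $\le 0$, consistent with every term being nonpositive. Alternatively, part~(2) can be obtained from part~(1) without the identity, by invoking the preceding Lemma~\ref{L1A} to merge the negatively-weighted endpoints $\alpha_1,\dots,\alpha_{r-1}$ and $\alpha_n,\dots,\alpha_{s+1}$ successively inward---each merge only increasing the left-hand side, and the remarks following Lemma~\ref{L1A} ensuring the merging iterates---until the configuration is supported on $[x_r,x_s]$, whereupon part~(1) applies.
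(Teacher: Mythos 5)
Your proof is correct, and your primary route is genuinely different from the paper's. The paper proves part~(2) first and deduces part~(1) from it: it applies Lemma~\ref{L1A} step by step to merge the negatively-weighted extreme points inward (exactly the ``alternative'' you sketch at the end), obtaining weights $\beta_r, \dots, \beta_s$ of total mass~$1$ supported on $\{x_r, \dots, x_s\}$, and then bounds the resulting form by $\frac{1}{2}(x_s - x_r)$ by citing an external result from the companion paper (Corollary~2.3 of \cite{NW2}, in substance the fact that $M(X) = D(X)/2$ for compact $X \subseteq \R$). Your main argument instead rests on the exact Abel-summation identity
\[
\sum_{i,j=1}^n \alpha_i \alpha_j |x_i - x_j|
 =
2 \sum_{k=1}^{n-1} (x_{k+1} - x_k)\, S_k (1 - S_k),
\qquad S_k = \alpha_1 + \dots + \alpha_k,
\]
which is correctly derived and from which both parts follow by the sign bookkeeping you describe: $S_k < 0$ for $k < r$ and $S_k > 1$ for $k \geq s$ make the overhang blocks nonpositive, while $S_k(1-S_k) \leq \frac{1}{4}$ handles the middle. (Your edge cases are also handled correctly: $r$ and $s$ exist since $\sum_i \alpha_i = 1 > 0$ forces some $\alpha_i \geq 0$, ties among the $x_i$ are harmless, and $r = s$ gives a vacuous middle block consistent with the bound~$0$.) What each approach buys: the paper's proof is two lines long because it leans on Lemma~\ref{L1A} (needed anyway) and the already-established one-dimensional value of~$M$; yours is self-contained, dispenses with both the merging lemma and the external corollary, re-proves the one-dimensional fact $M = D/2$ along the way, and, being an identity rather than an inequality, makes the equality cases and the strict nonpositivity of the endpoint contributions transparent. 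Note one small structural point about your closing alternative: as you set it up, part~(2) follows from part~(1) applied to the merged configuration, which is sound because you proved~(1) independently via the identity; the paper runs the dependence in the opposite direction, with the external corollary as the base.
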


\begin{proof}
It is enough to prove~(2). Applying Lemma~\ref{L1A}
step by step, we obtain $\beta_r, \dots, \beta_s \in \R$
with $\sum_{i=r}^s \beta_i = 1$ such that
\[
\sum_{i,j=1}^n \alpha_i \alpha_j |x_i - x_j|
 \leq
\sum_{i,j=r}^s \beta_i \beta_j |x_i - x_j|
 \leq
\frac{1}{2} (x_s - x_r),
\]
by Corollary~\nwBref{2.3}.
\end{proof}

We can now prove our first result on the value of~$M$
in $L^1$-embeddable spaces.

\begin{theorem}
\label{L1C}
Let $X$ be a compact subset of\/ $\R^n$, where $\R^n$ has the metric
induced by the $1$-norm~$\| \cdot \|_1$.
For $1 \leq s \leq n$, let $P_s \colon \R^n \to \R$ be defined by
$
P_s \bigl ((x_1,\ldots ,x_n) \bigr) = x_s
$
for all $(x_1, \ldots, x_n) \in \R^n$. Then
\begin{enumerate}
\item
$\ds M(X) \leq \frac{1}{2} \sum^n_{s=1} D(P_s(X))$ and
\vspace{4pt}
\item
$\ds M(X) \leq \frac{n D(X)}{2}$.
\end{enumerate}
\end{theorem}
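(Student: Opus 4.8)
The plan is to exploit the fact that the $1$-norm metric splits as a sum over coordinates, reducing the problem to a one-dimensional estimate to which Lemma~\ref{L1B} applies.

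First, for $x, y \in \R^n$ we have $d(x,y) = \|x - y\|_1 = \sum_{s=1}^n |P_s(x) - P_s(y)|$. Given any $\mu \in \MoneX$, let $\mu_s$ denote the push-forward of $\mu$ under $P_s$, so that $\mu_s$ is a finite signed Borel measure on $\R$ of total mass $\mu_s(\R) = \mu(X) = 1$, supported in the compact interval $[a_s, b_s]$, where $a_s = \min P_s(X)$ and $b_s = \max P_s(X)$, so that $b_s - a_s = D(P_s(X))$. Applying the change of variables to the product measure $\mu \times \mu$ and summing the finitely many coordinate terms, I obtain
\[
\Imu = \sum_{s=1}^n \int_\R \! \int_\R |u - v| \, d\mu_s(u)\, d\mu_s(v).
\]

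The heart of the argument is the one-dimensional bound
\[
\int_\R \! \int_\R |u - v| \, d\mu_s(u)\, d\mu_s(v) \leq \tfrac{1}{2} (b_s - a_s) = \tfrac{1}{2} D(P_s(X))
\]
for each $s$, which is the continuous analogue of part~(1) of Lemma~\ref{L1B}. To deduce it from the (finite) lemma, I would fix a fine Borel partition $\{A_k\}$ of $[a_s, b_s]$, choose $t_k \in A_k$, and form the finitely supported measure $\nu = \sum_k \mu_s(A_k)\, \delta_{t_k}$, which again has total mass~$1$ and support in $[a_s, b_s]$. Lemma~\ref{L1B}(1) gives $\int\!\int |u-v|\, d\nu\, d\nu \leq \tfrac12 \max_{k,l}|t_k - t_l| \leq \tfrac12 (b_s - a_s)$, and since $|u-v|$ is continuous on the compact square $[a_s,b_s]^2$, letting the mesh tend to zero makes the left-hand side converge to the integral against $\mu_s \times \mu_s$ (the error being controlled by the modulus of continuity of $|u-v|$ times the square of the total variation of~$\mu_s$). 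Summing these bounds over $s$ yields $\Imu \leq \tfrac12 \sum_{s=1}^n D(P_s(X))$, and taking the supremum over $\mu \in \MoneX$ gives part~(1). Part~(2) then follows at once: each projection satisfies $|P_s(x) - P_s(y)| \leq \|x-y\|_1 = d(x,y)$, so $P_s$ is $1$-Lipschitz and $D(P_s(X)) \leq D(X)$ for every $s$; hence $\sum_{s=1}^n D(P_s(X)) \leq n\, D(X)$, and substituting into part~(1) gives $M(X) \leq \tfrac{n D(X)}{2}$.

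The routine linear decomposition and the Lipschitz estimate are immediate; the one point requiring care is the passage from the finite Lemma~\ref{L1B} to arbitrary \emph{signed} measures $\mu_s$ of mass~$1$, since weak-$*$ approximation of signed measures need not preserve total variation. The aggregation construction above sidesteps this, because placing the mass $\mu_s(A_k)$ at a representative point keeps both the total mass and the support fixed while bounding the total variation by that of $\mu_s$; alternatively, one can bypass approximation entirely by writing $\int\!\int|u-v|\,d\mu_s\,d\mu_s = 2\int_{a_s}^{b_s} F_s(t)\bigl(1 - F_s(t)\bigr)\,dt$ with $F_s(t) = \mu_s\bigl((-\infty,t]\bigr)$ and using $F_s(1-F_s) \leq \tfrac14$ together with the vanishing of the integrand outside $[a_s,b_s]$.
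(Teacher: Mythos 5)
Your proof is correct, and its skeleton --- splitting $\|x-y\|_1$ coordinatewise and invoking the one-dimensional bound of part~(1) of Lemma~\ref{L1B}, then deducing part~(2) from the $1$-Lipschitz estimate $D(P_s(X)) \leq D(X)$ --- is exactly the paper's. Where you diverge is in how the reduction to that finite lemma is effected. The paper simply evaluates the quadratic form on finitely supported measures: for $\alpha_1, \dots, \alpha_k$ summing to $1$ and $x_1, \dots, x_k \in X$, it bounds $\sum_{i,j} \alpha_i \alpha_j \|x_i - x_j\|_1$ by $\frac{1}{2}\sum_s D(P_s(X))$ directly from the lemma, tacitly using the fact (available from the authors' earlier papers) that for compact $X$ the constant $M(X)$ is already the supremum of $\Imu$ over finitely supported $\mu \in \MoneX$. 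You instead take an arbitrary $\mu \in \MoneX$, push it forward under each $P_s$, and prove the continuous analogue of the lemma yourself --- once by aggregating mass over a fine partition, with the error controlled by the total variation of $\mu_s$ (and you are right that this construction, unlike naive weak-$*$ approximation, keeps the total variation under control, which is what makes the limit argument go through), and once via the identity $\int\!\int |u-v|\,d\mu_s\,d\mu_s = 2\int F_s(1-F_s)\,dt$, which is valid for signed $\mu_s$ of mass~$1$ because $x(1-x) \leq \frac{1}{4}$ holds for all real $x$ (not just $x \in [0,1]$) and the integrand vanishes off $[a_s, b_s]$. What your route buys is self-containedness at the level of measures, with the distribution-function identity giving a genuinely slick independent proof of the key one-dimensional estimate; what the paper's route buys is brevity, outsourcing the limiting argument to its predecessor papers.
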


\begin{proof}
Suppose that $\alpha_1, \ldots, \alpha_k \in \R$ satisfy
$\ds \sum^k_{i=1} \alpha_i = 1$ and consider $x_1, \ldots, x_k \in X$.
Then
\begin{eqnarray*}
\sum^k_{i,j=1} \alpha_i \alpha_j \| x_i - x_j \|_1
& = &
\sum^n_{s=1} \sum^k_{i,j=1} \alpha_i \alpha_j |P_s(x_i) - P_s(x_j)| \\
& \leq &
\frac{1}{2} \sum^n_{s=1} D(P_s(X)),
\end{eqnarray*}
by Lemma~\ref{L1B}, part~(1).
Since
\[
|P_s(x) - P_s(y)| = |P_s(x-y)| \leq \|x-y\|_1
\]
for all $x,y \in \R^n$, we get
\[
D(P_s(X))\leq D(X)
\]
for $1 \leq s \leq n$ and hence $M(X) \leq n D(X)/2$.
\end{proof}

\begin{remark}
\label{L1D}
We observe that the inequality $M(X) \leq n D(X)/2$ is sharp
in each dimension.

First, for $n = 1$, we have $M(X) = D(X)/2$
for every compact subset~$X$ of~$\R$.
To see this, note that $\mu = (\delta_\alpha + \delta_\beta)/2$ is
$d$-invariant on~$X$, where $\alpha = \min_{x\in X} x$
and $\beta = \max_{x\in X} x$, and since
$d_\mu(x) = (\beta -\alpha)/2 = D(X)/2$ for all $x$ in $X$,
we therefore have $\ds M(X) = D(X)/2$.

Now let $n \geq 2$. Define
$X_n = \{ \pm e_1, \pm e_2, \ldots ,\pm e_n, z \}$,
where $e_1, \ldots, e_n$ are the canonical unit vectors
and $z = (0, 0, \ldots ,0 )$. Let
\[
\mu_n
 =
\frac{1}{2} \sum_{i=1}^n (\delta_{e_i} + \delta_{-e_i}) - (n-1)\delta_z.
\]
Then $\mu_n \in \Mone(X_n)$ and $d_{\mu_n}(x) = n$ for all $x \in X_n$,
and hence $M(X_n) = n = n D(X_n)/2$.
\end{remark}

Theorem~\ref{L1G2} below will show that the example above is minimal
for each $n \geq 2$; that is, we will show that
$M(X) < n D(X)/2$ for all finite sets~$X$
in $(\R^n, \| \cdot \|_1$) with $|X| < |X_n| = 2n+1$.

We need first the following simple lemmas.

\begin{lemma}
\label{L1E}
Let $x_1, \ldots, x_k \in \R$. Then
\[
\sum_{1\leq i<j\leq k} |x_i - x_j|
\geq
(k-1) \max_{1\leq i,j\leq k} |x_i - x_j|.
\]
\end{lemma}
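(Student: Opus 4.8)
The plan is to reduce to the sorted case and then exhibit a subcollection of the pairwise differences that already accounts for the full bound, with every remaining term nonnegative.

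First I would observe that both sides of the asserted inequality are invariant under any permutation of the $x_i$, so I may assume without loss of generality that $x_1 \leq x_2 \leq \cdots \leq x_k$. Then the maximum on the right-hand side equals $x_k - x_1$, and for each pair with $i < j$ we have $|x_i - x_j| = x_j - x_i$. The whole problem thus becomes the inequality $\sum_{1 \leq i < j \leq k}(x_j - x_i) \geq (k-1)(x_k - x_1)$.

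The key step is the identity, valid for each interior index $i$ with $2 \leq i \leq k-1$,
\[
(x_i - x_1) + (x_k - x_i) = x_k - x_1,
\]
which pairs the term from $(1,i)$ with the term from $(i,k)$ so that together they reconstitute the full span. Summing this over the $k-2$ interior indices and adjoining the single term $x_k - x_1$ coming from the extreme pair $(1,k)$ yields $(k-2)(x_k - x_1) + (x_k - x_1) = (k-1)(x_k - x_1)$. The pairs employed here are exactly $(1,i)$ for $2 \leq i \leq k$ together with $(i,k)$ for $2 \leq i \leq k-1$; the first family has first coordinate $1$ and the second has first coordinate at least $2$, so the two families are disjoint and each consists of distinct pairs, and every listed pair satisfies $i < j$ and hence genuinely occurs in the full sum.

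Since every summand $|x_i - x_j|$ in the full sum is nonnegative, discarding those pairs not on the above list can only decrease the total, and I therefore obtain
\[
\sum_{1 \leq i < j \leq k}|x_i - x_j| \geq (k-1)(x_k - x_1) = (k-1)\max_{1 \leq i,j \leq k}|x_i - x_j|,
\]
as required. There is no substantive obstacle in this argument; the only point demanding care is the bookkeeping that confirms the selected pairs are distinct and all present in the sum, and the one idea driving everything is the pairing of $(1,i)$ with $(i,k)$, which recovers the span $x_k - x_1$ precisely $k-1$ times.
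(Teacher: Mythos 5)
Your proof is correct and is essentially the same as the paper's: both select the extremal pair together with, for each remaining point, its two distances to the endpoints of that pair, recovering the maximum $k-1$ times and discarding the remaining nonnegative terms. The only cosmetic difference is that you sort first, so your pairing identity $(x_i - x_1) + (x_k - x_i) = x_k - x_1$ holds with equality, whereas the paper relabels so the maximum is $|x_1 - x_2|$ and invokes the triangle inequality $|x_1 - x_i| + |x_2 - x_i| \geq |x_1 - x_2|$ at the corresponding step.
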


\begin{proof}
Without loss of generality, suppose that
$\max_{1\leq i,j\leq k} |x_i - x_j| = |x_1 - x_2|$.
Then
\begin{eqnarray*}
\sum_{1\leq i<j\leq k} |x_i - x_j|
& \ge &
|x_1 - x_2| + \sum^k_{i=3} \bigl( |x_1 - x_i|+ |x_2 - x_i| \bigr) \\
& \geq & (k-1) |x_1 - x_2|.
\end{eqnarray*}
\end{proof}

\begin{lemma}
\label{L1F}
Let $X = \{ x_1, \ldots, x_k\} \subseteq \R^n$,
where $\R^n$ has the metric induced by the $1$-norm~$\|\cdot\|_1$.
Then
\[
\sum_{s=1}^n D(P_s(X)) \leq (k/2) D(X).
\]
\end{lemma}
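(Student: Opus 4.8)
The plan is to apply Lemma~\ref{L1E} separately in each coordinate and then exchange the order of summation, so that the factor $k-1$ produced by Lemma~\ref{L1E} cancels against the number of pairs $\binom{k}{2}$. First I would fix $s$ with $1 \leq s \leq n$ and apply Lemma~\ref{L1E} to the real numbers $P_s(x_1), \dots, P_s(x_k)$, which gives
\[
\sum_{1 \leq i < j \leq k} |P_s(x_i) - P_s(x_j)|
 \geq
(k-1)\, D(P_s(X)),
\]
since $D(P_s(X)) = \max_{1 \leq i,j \leq k} |P_s(x_i) - P_s(x_j)|$ by definition of the diameter of the projected set.

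Next I would sum this inequality over all~$s$ and interchange the two summations on the left, obtaining
\[
(k-1) \sum_{s=1}^n D(P_s(X))
 \leq
\sum_{1 \leq i < j \leq k} \sum_{s=1}^n |P_s(x_i) - P_s(x_j)|
 =
\sum_{1 \leq i < j \leq k} \| x_i - x_j \|_1,
\]
where the final equality is simply the coordinate expansion of the $1$-norm. I would then bound each term $\| x_i - x_j \|_1$ by $D(X)$ and note that the number of pairs is $\binom{k}{2} = k(k-1)/2$, so the right-hand side is at most $\frac{k(k-1)}{2} D(X)$. Dividing through by $k-1$ yields the desired inequality $\sum_{s=1}^n D(P_s(X)) \leq (k/2)\, D(X)$; the degenerate case $k = 1$ is trivial, as all the diameters involved then vanish.

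There is no genuine obstacle in this argument; the one point worth stating explicitly is that the factor $k-1$ appears both in Lemma~\ref{L1E} and, via $\binom{k}{2}$, in the count of pairs, and it is exactly this cancellation that converts the crude diameter bound into the sharp coefficient $k/2$.
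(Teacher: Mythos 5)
Your proof is correct and follows essentially the same route as the paper's: both bound $\sum_{1 \leq i < j \leq k} \|x_i - x_j\|_1$ above by $\binom{k}{2} D(X)$ and below by $(k-1) \sum_{s=1}^n D(P_s(X))$ via coordinatewise application of Lemma~\ref{L1E}, with the factor $k-1$ cancelling. Your explicit remark on the degenerate case $k=1$ is a harmless extra that the paper leaves implicit.
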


\begin{proof}
Applying Lemma~\ref{L1E} for the last step, we have
\begin{eqnarray*}
D(X) \binom{k}{2}
& \geq &
\sum_{1\leq i<j\leq k} \| x_i - x_j \|_1 \\
& = &
\sum_{s=1}^n \sum_{1\leq i<j\leq k} |P_s(x_i) - P_s(x_j)| \\
& \geq &
(k-1) \sum_{s=1}^n D(P_s(X)),
\end{eqnarray*}
giving the result.
\end{proof}

We can now prove our final results on $L^1$-embeddable spaces.
First, by combining Theorem~\ref{L1C}, part~(1) and Lemma~\ref{L1F},
we have the following result immediately.

\begin{theorem}
\label{L1G1}
Let $X$ with $|X| = k$ be a finite subset of\/~$\R^n$,
where $\R^n$ has the metric induced by the $1$-norm~$\|\cdot\|_1$.
Then
\[
M(X) \leq \frac{k}{4} \cdot D(X).
\]
\end{theorem}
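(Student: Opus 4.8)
The plan is to chain the two immediately preceding results, since the theorem is a direct composition of them. First I would invoke Theorem~\ref{L1C}, part~(1), which supplies the bound
\[
M(X) \leq \frac{1}{2} \sum_{s=1}^n D(P_s(X))
\]
in terms of the diameters of the coordinate projections $P_s(X) \subseteq \R$. This reduces the problem to controlling the quantity $\sum_{s=1}^n D(P_s(X))$, which is the natural object to attack once the $1$-norm has been split into its coordinate contributions.

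Next I would apply Lemma~\ref{L1F}, which bounds the total projected diameter by
\[
\sum_{s=1}^n D(P_s(X)) \leq \frac{k}{2} D(X),
\]
where $k = |X|$. Substituting this estimate into the previous display gives
\[
M(X) \leq \frac{1}{2} \cdot \frac{k}{2} D(X) = \frac{k}{4} D(X),
\]
which is exactly the desired inequality.

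I expect no real obstacle at this final stage: the substantive work has already been absorbed into the two ingredients. Theorem~\ref{L1C} rests on the one-dimensional estimate of Lemma~\ref{L1B} applied coordinatewise after expanding $\|\cdot\|_1$, while Lemma~\ref{L1F} is the counting argument that trades the sum of projected diameters against $D(X)$ by way of Lemma~\ref{L1E}. The only point worth flagging is the role of the hypotheses: Theorem~\ref{L1C} requires merely compactness of~$X$, whereas the finiteness assumption $|X| = k$ is precisely what makes Lemma~\ref{L1F} available, so the passage to a finite set is essential here. With both cited inequalities in hand, the proof collapses to the single substitution above.
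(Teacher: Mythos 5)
Your proof is correct and is exactly the argument the paper gives: Theorem~\ref{L1C}, part~(1) combined with Lemma~\ref{L1F} yields $M(X) \leq \frac{1}{2} \cdot \frac{k}{2} D(X) = \frac{k}{4} D(X)$, which the paper states follows ``immediately.'' Your remark that finiteness of $X$ is needed precisely for Lemma~\ref{L1F} is an accurate reading of the hypotheses.
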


The second result refines the first in the case when
there is an appropriate bound on the cardinality of the subset.

\begin{theorem}
\label{L1G2}
Let $n \geq 2$ and let $X$ with $|X| = k \leq 2n$ be a finite subset
of\/~$\R^n$, where $\R^n$ has the metric induced
by the $1$-norm~$\|\cdot\|_1$. Then
\[
M(X)
\leq
\min \left( \frac{k}{4}, \,\frac{n}{2} - \frac{1}{4} \right) \cdot D(X).
\]
\end{theorem}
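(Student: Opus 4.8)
The plan is to establish the two bounds $M(X) \le \frac{k}{4}D(X)$ and $M(X) \le \bigl(\frac{n}{2}-\frac14\bigr)D(X)$ separately, the stated estimate then being their minimum. The first is exactly Theorem~\ref{L1G1}, so all the work goes into the second. For that I would bound $I(\mu)$ for an \emph{arbitrary} measure $\mu \in \Mone(X)$, writing $\mu = (\mu_i)$ over the points $x_1,\dots,x_k$ and splitting into coordinate energies $I(\mu) = \sum_{s=1}^n I_s(\mu)$, where $I_s(\mu) = \sum_{i,j}\mu_i\mu_j\,|P_s(x_i)-P_s(x_j)|$, as in the proof of Theorem~\ref{L1C}; taking the supremum over $\mu$ at the end produces the bound on $M(X)$ without any appeal to the existence of a maximal measure.

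The decisive device is a dichotomy on the sign of $\mu$. If every $\mu_i \ge 0$, then $\mu_i\mu_j \ge 0$ and $\|x_i-x_j\|_1 \le D(X)$ give $I(\mu) \le D(X)\sum_{i\ne j}\mu_i\mu_j = D(X)\bigl(1-\sum_i\mu_i^2\bigr)$, and since $\sum_i\mu_i^2 \ge 1/k \ge 1/(2n)$ by Cauchy--Schwarz, this yields $I(\mu) \le \bigl(1-\frac{1}{2n}\bigr)D(X) = \frac{2n-1}{2n}D(X)$, which is at most $\frac{2n-1}{4}D(X) = \bigl(\frac{n}{2}-\frac14\bigr)D(X)$ precisely because $n \ge 2$. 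If instead some weight is negative, I set $X^+ = \{x_i : \mu_i \ge 0\}$, so that $|X^+| \le k-1 \le 2n-1$, and invoke the sharper estimate of Lemma~\ref{L1B}, part~(2): sorting the $s$th coordinates, the right-hand side there is the half-spread of the nonnegatively weighted points, i.e.\ $I_s(\mu) \le \frac12 D(P_s(X^+))$ for each $s$. Summing over $s$ and then applying Lemma~\ref{L1F} to the set $X^+$ gives
\[
I(\mu) \le \tfrac12\sum_{s=1}^n D(P_s(X^+)) \le \tfrac{|X^+|}{4}\,D(X^+) \le \tfrac{2n-1}{4}\,D(X),
\]
using $D(X^+) \le D(X)$. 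In either branch $I(\mu) \le \bigl(\frac{n}{2}-\frac14\bigr)D(X)$, and combining with Theorem~\ref{L1G1} finishes the proof.

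The main obstacle is that the supremum defining $M(X)$ is taken over signed measures, so the extremal measure need not be a probability measure and the elementary averaging bound of the nonnegative case cannot be applied directly. The essential observation is that a single negative weight deletes one point from the \emph{effective} configuration, and that Lemma~\ref{L1B}(2) is exactly the tool that converts this into a reduction of the governing cardinality from $k$ to $|X^+| \le k-1$, after which Lemma~\ref{L1F} applies on the smaller set. The hypothesis $k \le 2n$ is used in both branches -- through $\sum_i\mu_i^2 \ge 1/(2n)$ in the nonnegative case and through $|X^+| \le 2n-1$ in the signed case -- and the restriction $n \ge 2$ is precisely what makes the averaging estimate $\frac{2n-1}{2n} \le \frac{2n-1}{4}$ hold, with equality at $n=2$ (realized by $\{\pm e_1,\dots,\pm e_n\}$).
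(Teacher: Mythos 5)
Your proof is correct, and in the decisive signed case it takes a genuinely different and simpler route than the paper's. Both arguments share the same skeleton: the coordinatewise decomposition $I(\mu)=\sum_s I_s(\mu)$, a dichotomy on the signs of the weights, the averaging bound $I(\mu)\le\bigl(1-\frac{1}{k}\bigr)D(X)\le\frac{2n-1}{2n}D(X)\le\frac{2n-1}{4}D(X)$ (valid exactly because $n\ge 2$) in the nonnegative case, and Lemmas \ref{L1B} and~\ref{L1F} in the signed case. But the paper first reduces to $k=2n$ (for $k<2n$ the minimum is $k/4$, already given by Theorem~\ref{L1G1}) and then treats the signed case via a two-step structure: it forms the set $Y=\{a_1,b_1,\dots,a_n,b_n\}$ of witnesses of the projection diameters, splits according to whether $|Y|<2n$ or $Y=X$, and in the latter case invokes part~(2) of Lemma~\ref{L1B} in only the single coordinate~$i_0$ in which the negative-weight point $x_1$ is extremal (the uniqueness of~$i_0$ being needed here), using part~(1) of Lemma~\ref{L1B} together with the identity $D(P_s(X))=D(P_s(X'))$ for $s\ne i_0$, where $X'=X\setminus\{x_1\}$, for all remaining coordinates. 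Your observation --- that the right-hand side of part~(2) of Lemma~\ref{L1B} is, in \emph{every} coordinate, the half-spread of the nonnegatively weighted points, so that $I_s(\mu)\le\frac12 D(P_s(X^+))$ holds for all $s$ simultaneously with no extremality hypothesis on the deleted points --- collapses this entire apparatus: a single application of Lemma~\ref{L1F} to $X^+$ yields $I(\mu)\le\frac{|X^+|}{4}D(X^+)\le\frac{k-1}{4}D(X)$ whenever $\mu$ carries a negative weight, which subsumes the paper's estimate (since $X^+\subseteq X'$ gives $D(P_s(X^+))\le D(P_s(X'))$ coordinatewise), handles all $k\le 2n$ uniformly without the reduction to $k=2n$, and is sharp: equality holds in the space $X_n$ of Remark~\ref{L1D}, where the invariant measure has a negative weight at the centre and $M(X_n)=n=\frac{(2n+1)-1}{4}D(X_n)$. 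What the paper's route buys is only the incidental structural analysis of where the projection diameters are witnessed; your argument is shorter, strictly stronger at the intermediate step, and I see no gap in it.
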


\begin{proof}
In view of the previous theorem, it remains only to show that
$M(X) \leq \bigl( (2n-1)/4 \bigr) D(X)$ for $|X| = 2n$.
Let $X = \{ x_1, x_2, \ldots, x_{2n} \}$.
For each $s$ such that $1 \leq s \leq n$, choose $a_s, b_s \in X$
such that
\[
D(P_s(X)) = |P_s(a_s) - P_s(b_s)|.
\]
Let $Y = \{a_1, b_1, \ldots , a_n, b_n \}$.
If $|Y| < 2n$, then Theorem~\ref{L1C}, part~(1) implies that
\begin{eqnarray*}
M(X)
& \leq &
\frac{1}{2} \sum_{s=1}^n D(P_s(X)) \\
& = & \frac{1}{2} \sum_{s=1}^n D(P_s(Y)) \\
& \leq &
\frac{|Y|}{4} D(Y) \\
& \leq &
\frac{2n-1}{4} D(X),
\end{eqnarray*}
by Lemma~\ref{L1F}.
Thus, let us assume that $|Y| = 2n$ and hence that
$X = \{ a_1, b_1, \ldots, a_n, b_n \}$.
Now let $\alpha_1, \alpha_2, \ldots, \alpha_{2n} \in \R$ such that
$\sum_{i=1}^{2n} \alpha_i = 1$. If $\alpha_i \geq 0$ for all
$1 \leq i\leq 2n$, we obtain
\begin{eqnarray*}
\sum^{2n}_{i,j=1} \alpha_i \alpha_j \|x_i - x_j\|_1
& \leq &
D(X) \sum_{i=1}^{2n} \alpha_i (1 - \alpha_i) \\
& = &
D(X) \biggl( 1 - \sum_{i=1}^{2n} \alpha_i^2 \biggr) \\
& \leq &
D(X) \Bigl( 1 - \frac{1}{2n} \Bigr)\\
& \leq &
\frac{2n-1}{4} D(X).
\end{eqnarray*}
If without loss of generality $\alpha_1 < 0$, let us choose~$i_0$
with $1 \leq i_0 \leq n$ such that $x_1 \in \{ a_{i_0}, b_{i_0} \}$,
noting that the choice is unique because
$X = \{ a_1, b_1, \ldots , a_n, b_n \}$.

Since
\[
\min_{y\in P_{i_0}(X)} y = P_{i_0}(x_1)
\quad\mbox{ or }\quad
\max_{y\in P_{i_0}(X)} y = P_{i_0}(x_1),
\]
we conclude that
\[
\sum^{2n}_{i,j=1} \alpha_i \alpha_j |P_{i_0}(x_i) - P_{i_0}(x_j)|
\leq
\frac{1}{2} D(P_{i_0}(X'))
\]
by Lemma~\ref{L1B}, part~(2), where $X' = X \setminus \{x_1\}$.
Since $D(P_s(X)) = D(P_s(X'))$ for all $s \neq i_0$,
we obtain
\begin{eqnarray*}
\sum^{2n}_{i,j=1} \alpha_i \alpha_j \|x_i - x_j\|_1
& = &
\sum_{s=1}^{n} \sum_{i,j=1}^{2n}
    \alpha_i \alpha_j |P_s(x_i) - P_s(x_j)| \\
& \leq &
\frac{1}{2} \sum_{s=1}^{n} D(P_s(X')) \\
& \leq &
\frac{2n-1}{4} D(X') \\
& \leq &
\frac{2n-1}{4} D(X),
\end{eqnarray*}
by Lemma~\ref{L1B}, part~(1) and Lemma~\ref{L1F}.
\end{proof}

If $\Xd$ is a metric space with at most $4$ points,
then by~\cite{Wolfe}, $\Xd$ can be isometrically embedded in
$(\R^2, \| \cdot \|_\infty)$, and it is then immediate that
it can also be isometrically embedded in $(\R^2, \| \cdot \|_1)$.
As a corollary of the previous theorem, we therefore have the following.

\begin{theorem}
\label{L1H}
Let $\Xd$ be a finite metric space of at most four points.
Then $M(X) \leq \frac{3}{4} D(X)$.
\end{theorem}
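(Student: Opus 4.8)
The plan is to reduce the problem to Theorem~\ref{L1G2} by invoking a classical embedding result. By the theorem of Wolfe~\cite{Wolfe} (as noted just before the statement), any metric space $\Xd$ with at most four points embeds isometrically in $(\R^2, \|\cdot\|_\infty)$; since for two-dimensional spaces the $\infty$-norm and the $1$-norm are linearly isometric (a rotation by $\pi/4$ together with a scaling carries one unit ball onto the other), $X$ also embeds isometrically in $(\R^2, \|\cdot\|_1)$. This places $X$ in exactly the setting of Theorem~\ref{L1G2}.

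Having embedded $X$ in $(\R^2, \|\cdot\|_1)$, I would apply Theorem~\ref{L1G2} with $n = 2$ and $k = |X| \leq 4 = 2n$. The theorem then gives
\[
M(X)
\leq
\min\left( \frac{k}{4}, \, \frac{n}{2} - \frac{1}{4} \right) \cdot D(X)
=
\min\left( \frac{k}{4}, \, \frac{3}{4} \right) \cdot D(X)
\leq
\frac{3}{4}\, D(X),
\]
since $k/4 \leq 1$ and $n/2 - 1/4 = 3/4$, and since in either case the minimum is at most $3/4$. This yields the desired bound directly.

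Two small points deserve care. First, the quantity $M(X)$ is a metric invariant depending only on the isometry class of $\Xd$, so computing it via any isometric copy of $X$ (here, the copy sitting inside $(\R^2, \|\cdot\|_1)$) is legitimate; the same applies to the diameter $D(X)$. Second, I should confirm that the reduction from $\|\cdot\|_\infty$ to $\|\cdot\|_1$ in the plane genuinely preserves distances exactly, not merely up to bi-Lipschitz equivalence: the map $(u,v) \mapsto \bigl((u+v)/2,\,(u-v)/2\bigr)$ (or its inverse) is a linear isometry from $(\R^2,\|\cdot\|_\infty)$ onto $(\R^2,\|\cdot\|_1)$, so the embedded distances are unchanged.

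The main obstacle here is essentially bookkeeping rather than mathematical depth: the proof is a short chain of citations, and the only thing to verify carefully is the exact linear isometry between $(\R^2,\|\cdot\|_\infty)$ and $(\R^2,\|\cdot\|_1)$ that legitimizes passing from Wolfe's embedding to the hypotheses of Theorem~\ref{L1G2}. Since the paper treats this isometry as "immediate," I expect the written proof to be just a sentence or two, and no genuine difficulty arises.
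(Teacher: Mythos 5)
Your proposal is correct and is essentially identical to the paper's argument: the paper likewise invokes Wolfe's embedding of any $4$-point space into $(\R^2, \|\cdot\|_\infty)$, passes (as ``immediate'') to $(\R^2, \|\cdot\|_1)$, and deduces the bound as a corollary of Theorem~\ref{L1G2} with $n = 2$ and $k \leq 4$. Your only addition is writing out the explicit linear isometry $(u,v) \mapsto \bigl((u+v)/2, (u-v)/2\bigr)$, which is a correct and harmless elaboration of the step the paper leaves unstated.
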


\begin{remark}
\label{L1I}
As noted above, $4$-point spaces are $L^1$-embeddable.
On the other hand, it is well known that $5$-point spaces
need not be $L^1$-embeddable.
For example, Assouad \cite[Proposition~2]{Ass2} constructs
a $5$-point space which is quasihypermetric but not hypermetric,
and is therefore not $L^1$-embeddable.
We noted in Example~\nwCref{Assouad1} that Assouad's space
has $M$ infinite.

A different class of spaces, with $M$ finite, is provided
by the work of the present paper, since Theorem~\ref{L1G1},
for example, can be applied to give a necessary condition
for $L^1$-embeddability. Specifically, Theorem~\ref{Knr20.1}
provides a class of quasihypermetric spaces of cardinality~$5$,
diameter~$1$ and with $M$ finite but arbitrarily large,
while by Theorem~\ref{L1G1} such a space can be $L^1$-embeddable
only if $M$ has value at most $5/4$.
\end{remark}

\bibliographystyle{amsplain}
\bibliography{bibliography}

\end{document}